\documentclass[a4paper,10pt]{article}
\usepackage[utf8]{inputenc}
\usepackage{setspace} 
\usepackage{mathrsfs}

\textheight 23cm
\textwidth 16cm
\topmargin -12pt
\evensidemargin -0,04cm
\oddsidemargin -0,04cm
\marginparsep 0pt
\marginparwidth 60pt
\headsep 8,1mm
\headheight 14,5pt
\footskip 12mm
\topskip 0pt

\usepackage{comment}

\usepackage{amsmath,amssymb,amsthm,ulem}
\usepackage{amsfonts, 
mathtools,enumerate,enumitem,esint}
\usepackage{color}
\usepackage{hyperref}
\usepackage{verbatim} 

\theoremstyle{plain}
\newtheorem{theorem}{Theorem}[section]
\newtheorem{proposition}{Proposition}[section]
\newtheorem{lemma}{Lemma}[section]
\newtheorem{corollary}{Corollary}[section]
\newtheorem{definition}{Definition}[section]

\newtheorem{remark}{Remark}[section]
\numberwithin{equation}{section}

\def\di{\mathrm{div}\,}
\def\dist{\mathrm{dist}\,}

\def\supp{\mathrm{supp}\,}
\def\bN{\mathbb{N}}
\def\bR{\mathbb{R}}
\def\bT{\mathbb{T}}
\def\bZ{\mathbb{Z}}
\def\cH{\mathcal{H}}

\def\cL{\mathcal{L}}

\def\div{\mathrm{div}\,}

\def\eps{\varepsilon}
\def\vfi{\varphi}
\def\dd{\,\mathrm{d}}

\def\lb{\langle} \def\rb{\rangle}

\title{The heat equation with the dynamic boundary condition as a singular limit of problems degenerating at the boundary} 
\author{Yoshikazu Giga$^*$, Micha{\l} {\L}asica$^{*,\dagger}$, Piotr Rybka$^\ddagger$ \\
\small{$^*$ The University of Tokyo, Graduate School of Mathematical Sciences} \\
\small{$^\dagger$ Institute of Mathematics of the Polish Academy of Sciences} \\
\small{$^\ddagger$ University of Warsaw, Institute of Applied Mathematics and Mechanics}}

\begin{document}

\maketitle

\begin{abstract}
We derive the dynamic boundary condition for the heat equation as a limit of boundary layer problems.
We study convergence of their  weak and strong solutions 
as the width of the layer tends to zero. We also discuss $\Gamma$-convergence of the functionals generating these flows. Our analysis of strong solutions depends on a new version of the Reilly identity.
\end{abstract}

\bigskip\noindent
{\bf Key words:} \quad boundary layer, dynamic boundary conditions, convergence of gradient flows, Reilly identity, $\Gamma$-convergence

\bigskip\noindent
{\bf 2020 Mathematics Subject Classification.} Primary: 35K20, Secondary: 49J45, 53C40  

\section{Introduction}
We consider a singular limit problem of a simple heat equation with conductivity coefficient $a_\varepsilon$
\begin{equation} \label{eq2}
	b_\varepsilon u_t^\varepsilon = \operatorname{div}(a_\varepsilon\nabla u^\varepsilon) \quad\text{in}\quad
	\Omega\times(0,T) =: \Omega_T
\end{equation}
in an $N$-dimensional domain $\Omega$, which is often called a concentrating capacity problem.
Namely, the heat capacity $b_\varepsilon$ is very high in an $\varepsilon$-neighborhood of the boundary. Here is its 
form
\begin{equation} \label{def-be} 
	b_\varepsilon(x) = \chi_{\Omega_\varepsilon} + \phi(\varepsilon) \chi_{\Omega\backslash\Omega_\varepsilon}
\end{equation}
where $\Omega_\varepsilon=\left\{ x\in\Omega \mid d(x)\geq\varepsilon \right\}$ and $\lim_{\varepsilon\to0} \phi(\varepsilon)=\infty$. 
The symbol $\chi_K$ denotes the characteristic function of a set $K$ and $d(x)$ stands for the distance of $x$ to the boundary $\partial\Omega$ of $\Omega$, i.e., $d(x)=\operatorname{dist}(x,\partial\Omega)$.
The conductivity coefficient $a_\varepsilon$ can be degenerate at $\partial\Omega$.
A typical choice of $a_\varepsilon$ is of the form
\begin{equation} \label{dea} 
	a(x) := \min \left\{ \frac{d(x)}{\varepsilon}, 1 \right\}.\end{equation}
We consider a limit of  solutions to \eqref{eq2} as $\varepsilon$ tends to zero under proper boundary conditions and initial data.
In order to make our presentation clear and to avoid non essential technical difficulties we consider $\Omega$ as a flat cylinder of the form
\begin{equation}\label{def-om}
\Omega = \mathbb{T}^{N-1} \times (0, 1),
\end{equation}
where $\mathbb{T}^{N-1}=\mathbb{R}^{N-1}/\mathbb{Z}^{N-1}$ is a flat torus.

It turns out the singular limit $u$ of $u^\varepsilon$ as $\varepsilon\to0$ depends on the value
\[
\lim_{\eps\to 0} \phi(\varepsilon) \varepsilon = \kappa \in [0,\infty],
\]
which is assumed to exist. If $\kappa\in(0,\infty)$, then we perform the limit passage under suitable assumptions on initial data of $u^\varepsilon$.
We show that the singular limit $u$ 
solves the heat equation with dynamic boundary condition
\begin{eqnarray} \label{eq1}
\left\{
\begin{array}{c}
	u_t = \Delta u \quad\text{in}\quad \Omega_T \vspace{4pt}\\
	\displaystyle\gamma(u)_t + \frac{1}{\kappa} \frac{\partial u}{\partial\nu} = 0 \quad\text{on}\quad \partial\Omega\times(0,T), \vspace{4pt}\\
	u(0,x) = u_0(x).
\end{array}
\right.
\end{eqnarray}
Here, $\gamma$ denotes the trace operator and $\nu$ denotes the outer unit normal to $\Omega$. If $\kappa=\infty$, the limit $u$ must solve the homogeneous Dirichlet problem for the heat equation. In the case $\kappa=0$, the limit $u$ must solve the corresponding homogeneous Neumann problem.

This type of result has been already established by Colli and Rodrigues \cite{colli} for zero initial data but for a non-zero force term. Although they consider general elliptic operator for the diffusion term, their assumption does not allow the degeneracy of $a_\varepsilon$ at $\partial\Omega$. In other words our $a_\varepsilon$ in \eqref{dea} is excluded. Recently, results very close in spirit to ours  were presented in \cite{eng-pipe}. The authors study there the problem of shrinking the thickness of the pipe, through which a heat conducting fluid flows. 

In order to derive our convergence results, we will exploit 
the fact that both flows \eqref{eq1} and \eqref{eq2} are gradient flows. 
Our goal is to look at this problem from two different angles. We first consider weak solutions. Namely, we mean $u^\varepsilon$ in $L^2(0,T,H^1)$ while $u_t^\varepsilon$ is in $L^2\left(0,T,(H^1)^*\right)$.
In this setting, the initial data $u_0^\varepsilon$ of $u^\varepsilon$ is allowed to be $L^2(\Omega)$ for the existence result.
Since the trace is not well defined for $L^2$-functions, we introduce an 
averaging operator $m_\varepsilon$ acting over the  $\varepsilon$-neighborhood of $\partial\Omega$ to find a class of well prepared initial data. More specifically, we say that the family  $u_0^\varepsilon$ is well prepared if there exists $(u_0,w_0)\in L^2(\Omega) \times L^2(\partial\Omega)$ such that
\[
	\sup_{\varepsilon>0} \int_\Omega b_\varepsilon (u_0^\varepsilon)^2\, dx < \infty
	\quad\text{and}\quad (u_0^\varepsilon, m_\varepsilon u_0^\varepsilon) \to (u_0, w_0) 
	\quad\text{in}\quad L^2(\Omega) \times L^2(\partial\Omega).
\]
For such 
initial data $u_0^\varepsilon$, we prove that the corresponding weak solutions $u^\varepsilon$ to \eqref{eq2} converge weakly to a weak solution of \eqref{eq1}. This  is stated in Theorem \ref{thm_conv_dbc}, when $\kappa\in(0,\infty)$. We regard \eqref{eq2} as a gradient flow $b_\varepsilon u_t^\varepsilon\in-\partial E_\varepsilon(u)$ with $E_\varepsilon(u)=\frac12\int_\Omega a_\varepsilon \left|\nabla u^\varepsilon\right|^2\, dx$ but the a priori estimate provided by taking inner product of this equation with $u^\varepsilon$ is sufficient for the limit passage. There is no need to address the convergence of the corresponding variational functionals like $E_\varepsilon$, but we will do this for the sake of completeness of analysis. Interestingly, we can simultaneously consider regular problem (when $\inf a_\varepsilon>0$) and degenerate ones, provided that $a_\varepsilon$ vanish in a controllable manner at the boundary, see \eqref{asa_nondeg}.

Our convergence result, see Theorem \ref{thm_conv_dbc}, holds for natural boundary conditions (we do not discuss their meaning here), when $\kappa\in (0,\infty)$. 
We separately treat the easier case of $\kappa=0$ leading to the homogeneous Neumann conditions in the limit or $\kappa=\infty$ yielding the homogeneous Dirichlet conditions for $u$, which are well-understood for weak solutions.  We stress that these results are based on the analysis of behavior of averages of solutions of \eqref{eq2} over the boundary layer.
 
We now turn our discussion to strong solutions, $u$, of (\ref{eq2}). By definition they are  
in $L^2(0,T,H_{loc}^2)$ while $u_t\in L^2(0,T,L^2)$.
Although we eventually show in Theorem \ref{thmain} the convergence of solutions of equation \eqref{eq2} to \eqref{eq1} for all $\kappa\in(1,\infty)$ in case of well-prepared data (in a stronger topology), there are two main differences from Theorem \ref{thm_conv_dbc}. The first problem is the necessity to study the behavior of the traces and their convergence, when $a^\eps$ is degenerate. In fact we consider only the case of $a^\eps$ given by (\ref{dea}). 

For this purpose, we have to control derivatives of $u^\varepsilon$ belonging locally  to $H^2$. The need to estimate the second derivatives in terms of $\operatorname{div}(a_\varepsilon\nabla u^\varepsilon)$ leads us to consider the Reilly-type, which is the content of Theorem \ref{lem1.8}.

The second problem is that one has to study the $\Gamma$-convergence of functionals $E_\varepsilon$. The peculiarity of our problem is that (\ref{eq2}) is the gradient flow of $E_\eps$ with respect to the inner product, which changes with $\eps,$
$$
\lb u, v\rb_\eps = \int_\Omega  b_\varepsilon u v  \,dx,
$$
where $b_\eps$ is given in (\ref{def-be}).
This inner product defines a very useful Hilbert space,  $L^2_\eps(\Omega)$, namely,
\begin{equation}\label{df-Le}
L^2_\eps(\Omega) = (L^2(\Omega), \lb \cdot, \cdot\rb_\eps).
\end{equation}

This $L^2_\eps(\Omega)$ space changing with $\eps$ is a reason why the usual notion of $\Gamma$-convergence yields incorrect results. We have to use the $\Gamma$-convergence with respect to an immersion $\iota_\eps$. This permits us to deal with the situation, when the limiting space is bigger then the  base of $E_\eps,$ see Definition \ref{dGC} and Lemma \ref{lem1.10}. The modified $\Gamma$-convergence is used to identify properties of the limit of solution. At the same time the convergence was enforced with the 
help of the
Energy-Dissipation-Balance originally due to de Giorgi with further more recent extensions by Serfaty, \cite{sylvia}, or Mielke \cite{M}.

Let us comment on the Reilly-type identity we derive in Section \ref{s-Re}. Originally the geometric context was essential. Here, it is not the  case. To some extent, our derivation of this identity is similar to the singularity analysis of solution to the Laplace equations in polygonal domains, see e.g. \cite[Theorem 2.2.1]{grisvard}.  We believe that the derivation of Reilly-type identity is of independent interest. That is why a whole section of this paper is devoted solely to this topic. We expect that this result could be generalized to any region with the $C^2$-boundary.

We are interested in deriving the dynamic boundary conditions  (\ref{eq1}$_2$) for the heat equation. This is closely related to the topic studied by Savar\'e-Visintin,  see \cite{SV}. They suggested blowing up the boundary in the  spirit of the Gamma-limit of variational functionals. They had in mind the transition problem as the origin for the limit process. The authors in \cite{SV} studied the anisotropic elliptic problem, where the direction perpendicular to the transmission surface dominates. 

The paper \cite{SV} is quite different from ours, because when a small parameter goes to zero, then the ellipticity matrix becomes large in the direction perpendicular to the surface dividing the given region into two with different properties, what is origin of the transmission. Subsequently, the limit passage is performed. Another passage studied in that paper is the case when one of the regions tends to a manifold and in the limit we obtain a second order parabolic eq. coupled to another parabolic problem in $\bR^N$.

The setting and the results presented by Colli and Rodrigues, \cite{colli}, are  much closer to ours. Our heat equation is a special case of their transmission problem.  Colli-Rodrigues obtain a range of similar results depending on the value of the parameter, which we call $\kappa$. The main difference is that our problems degenerate at the boundary, while this is not the case in \cite{colli}.

In a recent paper, \cite{eng-pipe}, Ljulj {\it et al.} consider a heat conducting fluid in a pipe of finite thickness. Depending on the properties of the pipe the authors obtain a number of boundary conditions for the heat equation in the fluid, when the thickness of the wall goes to zero. The analysis of \cite{eng-pipe} is based on the multiple scale convergence. In this case the thin wall is blown to a fixed region.

At last we  should comment on the boundary conditions. In case of the strong solutions and $a_\eps$ given by (\ref{dea}), 
there is no need to complement (\ref{eq2}) with boundary data. It turns out that the regularity makes them implicit. More precisely, we prove in Lemma \ref{lem1.1} that if $E_\eps(u)$ is finite and $\di (a_\varepsilon \nabla u)$ is in $L^2(\Omega)$, then the trace $\gamma(a_\eps \frac{\partial u}{\partial x_N})$ exists and it vanishes as an $L^2$-function.  

We cannot claim that much for weak solutions for the lack of necessary regularity. In case of degenerating weights $a_\eps$ we formally impose the natural boundary conditions,
$$
\gamma(a_\eps \frac{\partial u}{\partial x_N} )=0.
$$
However, they should be understood as a justification for the lack of boundary integrals in the definition of weak solutions, see Definition \ref{weak_bl_def}. 


Here is the plan of our paper. Since we deal a lot with averages over the boundary layer and the relationship between them and the  traces we devote Section \ref{sec2} to these topics. In Section 3 we establish the convergence of weak solutions for all the cases $\kappa\in[0,\infty]$. We derive the Reilly-type identity in Section 4. Section 5 is devoted to convergence of strong solution, that exploit the energy-dissipation balance and the Reilly identity. In this section  we consider only the regular  case $\kappa\in(0,\infty)$. The singular cases, $\kappa\in\{0,\infty\}$, which do not need the Reilly identity are considered in the last Section 6.

\section{Preliminaries: facts on traces and averages}\label{sec2}
In this section we gather facts on traces for both types of solutions, which we consider here. The weight $b_\eps$, which creates the boundary layer, leads us to consider averages over $\Omega\setminus \Omega_\eps$. The following average, defined for each $\eps \in ]0,1[$, plays the 
crucial role in our considerations. Namely, we set $m_\eps\colon L^p(\Omega) \to L^p(\partial\Omega)$, $p\in[1,\infty)$ by formula
\begin{equation}\label{def-we}
(m_\eps u)(x') = 
\left\{
\begin{array}{ll}
\frac 1\varepsilon\int_0^\varepsilon u(x',s)\,ds  & \hbox{if } x_N =0,\\
\frac 1\varepsilon\int_{1-\varepsilon}^1 u(x',s)\,ds  & \hbox{if } x_N =1 ,
\end{array}
\right.    
\end{equation}
where $(x', x_N)\in \partial\Omega$.

We also discuss traces at a distance $\eps$ from the boundary. For this purpose we use the trace operator $\gamma^\eps: H^1(\Omega_\eps)\to L^2(\partial\Omega)$, where we  identify $L^2(\partial\Omega)$ with $L^2(\partial\Omega_\eps)$. We notice that for $\eps\in(0, \frac12)$, we have $\partial \Omega_\eps =\bT^{N-1}\times\{\eps, 1-\eps\} $.

We have to specify the energy functionals. Namely, for $\eps\in (0,\frac{1}2)$, we define
\begin{equation}\label{defE} 
E_\eps(u) =\left\{ 
\begin{array}{ll}
\frac12\int_\Omega a_\eps|\nabla u|^2\,dx  & u\in H^{1}_{loc}(\Omega), \\
 +\infty   & u \in L^2(\Omega)\setminus  H^{1}_{loc}(\Omega) .
\end{array}
\right.
\end{equation}
A critical point in this definition is the choice of $a_\eps$. We always assume that
\begin{equation}\label{asa}
a_\eps\in C(\overline\Omega), 
\qquad \{x\in \overline\Omega: a_\eps =0\}\subset\partial\Omega,\qquad \lim_{\eps\to0} a_\eps = 1\quad\hbox{locally uniformly in } \Omega. 
\end{equation}
We  consider  $a_\eps(x)$ depending only on the distance from $x$ to the boundary of $\partial\Omega$,
\begin{equation}\label{a-dis}
a_\eps(x) = a_\eps(d(x)),
\end{equation}
where $d(x)$ denotes the distance of $x$ from the boundary $\partial\Omega$, i.e., $d(x)=\operatorname{dist}(x,\partial\Omega)$; here we abuse notation using $a_\varepsilon$ to represent also a function of $d$. 

To establish our results, we will also require  a
non-degeneracy condition, which we state as  follows, 
\begin{equation}\label{asa_nondeg}
a_\eps \geq \underline{a} > 0 \quad \text{on } \Omega_\eps, \qquad  
\frac{1}{\eps}\int_0^\eps \left(\int_s^\eps \frac{1}{a_\eps(\sigma)}\dd \sigma\right)^p \dd s 
\to 0 \quad \text{as } \eps \to 0^+,
\end{equation}
where $p =\frac{1}{2}$ or $p=1$.
In order to simplify the notation, we will assume that $\underline{a} =1$.

In particular we may choose $a_\eps \equiv 1.$ This possibility will be considered only for weak solutions.
In general we have to know how fast $a_\eps$ goes to zero, when the argument approaches the boundary, i.e. we need (\ref{asa_nondeg}).


Our statements on traces depend on the notion of a solution we consider. When we discuss the strong solutions we expect the following bound on the data,
\begin{equation}\label{s-trace1}
\sup_{\eps>0} E_\eps( u^\eps_0)< \infty.
\end{equation}
On the other hand,  for  weak solutions we would rather expect
\begin{equation}\label{s-trace2}
\sup_{\eps>0} \int_\Omega b_\eps (u_0^\eps)^2\, dx<\infty.
\end{equation}
As we shall see in Theorem \ref{thm_conv_dbc}, estimate (\ref{s-trace2}) will imply an integrated in time version of  (\ref{s-trace1}) see (\ref{energy_eq_bl}). It turns out that this is sufficient.

After these preliminary definitions, we make our first observation.

\begin{lemma}\label{traceU} Let us suppose that $u^\eps\in L^2(\Omega),$ $\eps>0$, and
$$
\sup_{\eps>0} \| u^\eps\|_{L^2},\quad \sup_{\eps>0} E_\eps(u^\eps)<\infty.
$$
Then, \\
(0) there is a subsequence (without relabeling), which converges when $\eps\to0$ to $u$ weakly in $L^2(\Omega)$ and weakly in $H^{1}(\Omega_\delta)$ for all $\delta>0$. Moreover,\\
(1) If $u$ is a weak limit of  $u^\varepsilon$  in the sense described in (0) above, then
$$
\gamma (u) = \lim_{\varepsilon\to 0} \gamma^\varepsilon(u^\varepsilon),
$$
where $\gamma^\varepsilon$ is defined above  and the limit is taken in the $L^2(\partial\Omega)$-norm.\\
(2) If in addition $a_\eps$ is defined by (\ref{dea}) and
$$
\sup_{\eps>0} \int_{\Omega_\eps} |\hbox{\rm div}\, (a_\eps \nabla u^\eps)|^2\, dx < \infty,
$$
then there is another subsequence (not relabelled) converging to $u $ weakly in $H^{2}(\Omega_\delta)$ for all fixed $\delta>\eps$. The limit is in $H^2(\Omega)$ and 
$$
\gamma (\frac{\partial u}{\partial x_N}) = \lim_{\varepsilon\to 0} \gamma^\varepsilon(\frac{\partial u^\varepsilon}{\partial x_N})\qquad\hbox{in }L^2(\partial\Omega).
$$
\end{lemma}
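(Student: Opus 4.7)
The plan is to prove all three parts via a common pattern: interior weak compactness on subdomains $\Omega_\delta$, combined with a fundamental-theorem-of-calculus (FTC) estimate comparing traces at the moving level $x_N=\eps$ (and symmetrically $x_N=1-\eps$) with traces at a fixed inner level $x_N=\delta>\eps$. For (0), the uniform $L^2$-bound yields weak $L^2(\Omega)$-compactness, and since $a_\eps\to 1$ locally uniformly by (\ref{asa}), on each $\Omega_\delta$ one has $a_\eps\geq 1/2$ once $\eps$ is small, so $\|\nabla u^\eps\|_{L^2(\Omega_\delta)}^2\leq 4E_\eps(u^\eps)$ is bounded. A Cantor diagonal argument over $\delta_n\downarrow 0$ produces a single subsequence weakly convergent in $H^1(\Omega_\delta)$ for every $\delta>0$.

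For (1), fix $\delta>\eps>0$. The non-degeneracy assumption (\ref{asa_nondeg}) gives $a_\eps\geq\underline a=1$ on $\Omega_\eps$, and Cauchy--Schwarz applied to
\[
u^\eps(x',\eps)-u^\eps(x',\delta)=-\int_\eps^\delta\partial_N u^\eps(x',\sigma)\dd\sigma
\]
yields $\|\gamma^\eps u^\eps-\gamma_\delta u^\eps\|_{L^2(\partial\Omega)}^2\leq 2(\delta-\eps)E_\eps(u^\eps)\leq C(\delta-\eps)$. For each fixed $\delta$, part (0) together with the compactness of the trace $H^1(\Omega_\delta)\to L^2(\partial\Omega_\delta)$ gives $\gamma_\delta u^\eps\to\gamma_\delta u$ strongly in $L^2(\partial\Omega)$, after identifying $\partial\Omega_\delta$ with $\partial\Omega$. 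A triangle inequality then shows $\{\gamma^\eps u^\eps\}$ is Cauchy in $L^2(\partial\Omega)$; applying the same FTC bound to the limit $u\in H^1(\Omega_\delta)$ identifies the Cauchy limit with $\gamma(u)$.

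For (2), because $a_\eps\equiv 1$ on $\Omega_\eps$ when $a_\eps$ is of the form (\ref{dea}), the new hypothesis reduces to $\sup_\eps\|\Delta u^\eps\|_{L^2(\Omega_\eps)}<\infty$. Interior $H^2$-regularity for the Laplacian on $\Omega_{\delta/2}\subset\Omega_\eps$ yields $\sup_{\eps<\delta/2}\|u^\eps\|_{H^2(\Omega_\delta)}<\infty$, and a further diagonal extraction gives $u^\eps\rightharpoonup u$ weakly in $H^2(\Omega_\delta)$ for every $\delta>0$. The claimed trace convergence of $\partial_N u^\eps$ then follows by repeating the FTC argument of (1) applied to $\partial_N u^\eps$, controlling the right-hand side via $\partial_N^2 u^\eps=\Delta u^\eps-\Delta_{x'} u^\eps$ on $\Omega_\eps$ combined with tangential smoothness on $\bT^{N-1}$.

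The main obstacle will be upgrading the local statement $u\in H^2(\Omega_\delta)$ for every $\delta>0$ to the global statement $u\in H^2(\Omega)$, since the interior-regularity constants degenerate as $\delta\to 0$. I would resolve this by exploiting the flat cylinder geometry: tangential Fourier decomposition on $\bT^{N-1}$ reduces matters to one-dimensional problems in $x_N$, and the integration-by-parts identity relating $\|\Delta u^\eps\|_{L^2(\Omega)}^2$ to $\|D^2 u^\eps\|_{L^2(\Omega)}^2$ produces boundary terms of the form $\int_{\partial\Omega}\partial_N u^\eps\,\Delta_{x'} u^\eps$; the vanishing trace recorded in Lemma~\ref{lem1.1} together with the trace bound from (1) should keep these boundary terms under control, delivering a uniform $H^2(\Omega)$-estimate along the subsequence and hence $u\in H^2(\Omega)$ by weak lower semicontinuity.
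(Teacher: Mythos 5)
Parts (0) and (1) of your argument are sound. For (1) you replace the paper's route (shifting to $v^\eps(\cdot+\eps)$, using boundedness of the trace $H^1\to H^{1/2}$ and the compact embedding $H^{1/2}(\partial\Omega)\hookrightarrow L^2(\partial\Omega)$, and identifying the limit through the formula $\gamma^\eps(v^\eps)=-\int_\eps^1\partial_Nv^\eps\,ds$) by the quantitative Cauchy estimate $\|\gamma^\eps u^\eps-\gamma_\delta u^\eps\|^2_{L^2(\partial\Omega)}\le 2(\delta-\eps)E_\eps(u^\eps)$, where $\gamma_\delta$ is the trace on $\partial\Omega_\delta$. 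This is a legitimate and arguably more transparent alternative; like the paper's proof it silently uses the lower bound $a_\eps\ge\underline{a}>0$ on $\Omega_\eps$ from \eqref{asa_nondeg}, which is not in the statement of the lemma but is clearly intended.

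The gap is in part (2). Everything there hinges on a second-derivative bound that is uniform both in $\eps$ and up to the moving interface $\partial\Omega_\eps$: your FTC argument for $\gamma^\eps(\partial_Nu^\eps)$ needs $\sup_{\eps}\int_{\Omega_\eps\setminus\Omega_\delta}|\partial_N^2u^\eps|^2\le C$ with $C$ independent of $\eps$, and the claim $u\in H^2(\Omega)$ needs $\sup_\eps\|u^\eps\|_{H^2(\Omega_\eps)}<\infty$. Interior regularity gives constants that blow up as $\delta\to0$, as you acknowledge, and your proposed fix does not close this. First, ``$\partial_N^2u^\eps=\Delta u^\eps-\Delta_{x'}u^\eps$ combined with tangential smoothness on $\bT^{N-1}$'' is circular: periodicity in $x'$ gives no uniform $L^2$ bound on $\Delta_{x'}u^\eps$ up to $\partial\Omega_\eps$ --- that bound is part of what must be proved. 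Second, the identity relating $\|\Delta u^\eps\|^2_{L^2(\Omega)}$ to $\|D^2u^\eps\|^2_{L^2(\Omega)}$ with boundary terms $\int_{\partial\Omega}\partial_Nu^\eps\,\Delta_{x'}u^\eps$ is not available here: the hypothesis controls only $\di(a_\eps\nabla u^\eps)$, and in the layer $\Omega\setminus\Omega_\eps$ one has $\di(a_\eps\nabla u^\eps)=a_\eps\Delta u^\eps+\tfrac1\eps\partial_Nu^\eps$, so $\Delta u^\eps$ is not bounded in $L^2(\Omega)$; moreover, if you instead integrate by parts only over $\Omega_\eps$, the boundary terms sit on $\partial\Omega_\eps$, where neither Lemma \ref{lem1.1} (which concerns the vanishing of $\gamma(a_\eps\partial_\nu u^\eps)$ on $\partial\Omega$) nor the $L^2$ trace bound on $u^\eps$ from part (1) says anything. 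The missing ingredient is precisely the weighted Reilly-type identity, Theorem \ref{lem1.8}: performing the integration by parts with the weight $a_\eps$ over all of $\Omega$ produces the interface term $\tfrac1\eps\int_{\partial\Omega_\eps}|\partial_Nu^\eps|^2$ with a favorable sign, so that $\int_{\Omega_\eps}|D^2u^\eps|^2=\int_{\Omega_\eps}a_\eps^2|D^2u^\eps|^2\le\int_\Omega|\di(a_\eps\nabla u^\eps)|^2$ uniformly in $\eps$. This is exactly Step (e) of the paper's proof; your sketch in effect tries to rederive that identity but misplaces the boundary terms and misidentifies what controls them.
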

\begin{remark}
In part (2), we can take any other partial derivative $\frac{\partial u}{\partial x_i}$, $i=1,\ldots, N-1.$
\end{remark}
\begin{proof} {\it Step (a).} The existence of a subsequence convergent in $L^2(\Omega)$ is automatic. The existence of a further subsequence convergent in $H^{1}_{loc}(\Omega)$ follows from the diagonal process. The regularity of the limit is obvious. Details are left out.

{\it Step (b).} The boundary of $\Omega$ has two diffeomorphic components. It is sufficient to consider one of them and we choose $\bT^{N-1} \times \{0\}$.
Now, we may introduce  a family of functions $v^\eps$ defined by $v^\eps = u^\eps \psi$, where $\psi\in C^\infty(\bR)$ is a cut-off function, such that $\psi|_{[0,\frac 14]} = 1$ and $\psi|_{[\frac 12, 1]} = 0$. 

Now, the family $v^\eps$ is weakly convergent in $L^2(\Omega)$ to $v$, $v^\eps \equiv 0$ on $\bT^{N-1} \times (\frac12, 1)$ and $\int_{\Omega_\eps} |\nabla v^\eps|^2\, dx < \infty$. Then, the family $v^\eps(\cdot+\eps)$ is uniformly bounded in $H^{1}(\Omega)$ for $\eps\in (0,\frac12)$. Since the trace $\gamma: H^{1}(\Omega) \to H^{1/2}(\partial\Omega)$ is bounded, we deduce from the Sobolev embedding theorem that   the family $\gamma(v^\eps(\cdot + \eps))$ is precompact in $L^2(\partial\Omega)$. So, possibly after extracting another subsequence, we conclude that $\gamma(v^\eps(\cdot + \eps))$ converges in $L^2(\partial \Omega)$ to some $w$.

{\it Step (c)} We first show that $\frac{\partial v^\eps}{\partial x_N} $ converges weakly to $\frac{\partial v}{\partial x_N}$. More precisely, for any $\vfi\in L^2(\Omega)$, we have
$$
\lim_{\eps\to 0} \int_{\Omega_\eps} \frac{\partial v^\eps}{\partial x_N} \vfi \,dx =
\int_{\Omega} \frac{\partial v}{\partial x_N} \vfi \,dx.
$$
Indeed,  let us fix $\eta>0.$ Since $v\in H^1(\Omega)$,
we will consider only those $\delta>0$ for which 
\begin{equation}\label{de-ta}
\int_{\Omega\setminus \Omega_\delta} \left|\frac{\partial v}{\partial x_N} \right|^2 \,dx \le \eta.
\end{equation}
Then, for any  $\eps<\delta$, we have
\begin{eqnarray*}
&&\int_{\Omega_\eps} \frac{\partial v^\eps}{\partial x_N} \vfi \,dx 
- \int_{\Omega} \frac{\partial v}{\partial x_N} \vfi \,dx 
\\ &=& 
\int_{\Omega_\eps\setminus\Omega_\delta} \frac{\partial v^\eps}{\partial x_N} \vfi \,dx +
\int_{\Omega_\delta}
\left( \frac{\partial v^\eps}{\partial x_N}- \frac{\partial v}{\partial x_N}\right)
\vfi \,dx
- \int_{\Omega\setminus\Omega_\delta} \frac{\partial v}{\partial x_N} \vfi \,dx 
\\ &= & I_1 + I_2 + I_3.
\end{eqnarray*}
Our assumption on the uniform boundedness of energies and the Cauchy-Schwarz inequality imply that
$$
| I_1| \le \sqrt 2 \sqrt{ E_\eps(v^\eps)} \| \vfi\|_{L^2(\Omega\setminus\Omega_\delta)}\le \eta,
$$
after restricting $\delta$ one more time. By the same token, estimate (\ref{de-ta}) yields $|I_3|\le \sqrt\eta \| \vfi \|_{L^2(\Omega\setminus\Omega_\delta)}$. Finally, since $v^\eps$ converges weakly to $v$ locally in $H^1(\Omega)$, so in particular in $H^1(\Omega_\delta)$, we deduce that $I_2$ goes to zero as $\eps \to 0$.

{\it Step (d)} Since $\gamma(v^\eps(\cdot + \eps)) = \gamma^\eps(v^\eps)$, the formula
$$
\gamma^\eps(v^\eps(x)) = - \int_\eps^1 \frac\partial{\partial x
_N} v^\eps(x,s)\, ds
$$
shows that weak convergence of $\frac{\partial v^\eps}{\partial x_N}$ implies that $\gamma^\eps(v^\eps)$ converges weakly to $\gamma(v).$ Combining this with Step (b) yields $w = \gamma(v)$.

{\it Step (e)} Under the boundedness assumption of Lemma \ref{traceU}(2), we invoke the Reilly-type identity, Theorem \ref{lem1.8}, to deduce that for all $\delta>0$
$$
\| u^\eps \|_{H^2(\Omega_\delta)} \le M<\infty,
$$
where $M$ does not depend on $\delta$. Our claim follows, in particular $u\in H^2(\Omega).$

{\it Step (f)} We will simultaneously show  (1) and (2). For this purpose we use steps (b) and (d). We  set
$$
v^\varepsilon(x', x_N ) = u^\varepsilon(x', x_N)\eta(x_N), \qquad(\hbox{resp. } v^\varepsilon(x', x_N ) = \frac{\partial u^\eps}{\partial x_N}(x', x_N)\eta(x_N)),
$$
where $\eta$ is a cut-off function with the support in $(0,\frac14)$ and equal to one on $(0,\frac18).$
Then $v^\varepsilon$ satisfies conditions assumed in step (b). Hence, step (d) yields (1) and (2).
\end{proof}

In case of weak solutions we want to replace uniform boundednes of energies with boundedness of the time integral of energies. However, this relaxed assumption  gives us only the  weak convergence of traces.

\begin{lemma}\label{traceU-w} Let us suppose that there exist $0\le t_0< t_1<\infty$ and $u^\eps\in L^2(\Omega\times (t_0,t_1))$ such that
$$
\sup_{\eps>0} \| u^\eps\|_{L^2(\Omega\times (t_0,t_1))},\quad \sup_{\eps>0} \int_{t_0}^{t_1} E_\eps(u^\eps(s))\,ds <\infty.
$$
Then, \\
(0) there exists $u\in L^2(t_0,t_1; H^1(\Omega))$ and a subsequence (without relabeling), $u^\eps$, which converges when $\eps\to0$ to $u$ weakly in $L^2(t_0,t_1; H^1(\Omega_\delta))$ for all $\delta>0$. Moreover,\\
(1) If $u$ is this  weak limit of  $u^\varepsilon$, then
$$
\gamma (u) = \lim_{\varepsilon\to 0} \gamma^\varepsilon(u^\varepsilon),
$$
where $\gamma^\varepsilon$ is defined above  and the limit is in the weak $L^2(\partial\Omega\times(t_0,t_1))$-topology.
\end{lemma}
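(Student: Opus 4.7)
The plan is to transpose the argument of Lemma \ref{traceU}, parts (0) and (1), to the time-integrated setting; the essential new feature is that the trace convergence will be only weak.

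\emph{Part (0).} Fix a sequence $\delta_n \downarrow 0$. By the non-degeneracy assumption \eqref{asa_nondeg}, $a_\eps \ge 1$ on $\Omega_\eps$, so for each $\eps < \delta_n$,
\[
\int_{t_0}^{t_1}\|u^\eps(t)\|_{H^1(\Omega_{\delta_n})}^2\,\dd t \le \|u^\eps\|_{L^2(\Omega\times(t_0,t_1))}^2 + 2\int_{t_0}^{t_1} E_\eps(u^\eps(t))\,\dd t \le C,
\]
uniformly in $\eps$. A diagonal extraction produces a subsequence and a limit $u$ such that $u^\eps \rightharpoonup u$ in $L^2(t_0,t_1;H^1(\Omega_\delta))$ for every $\delta>0$. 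The uniform bound on $E_\eps$ together with weak lower semicontinuity and monotone convergence in $\delta \to 0^+$ then yields $u \in L^2(t_0,t_1;H^1(\Omega))$.

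\emph{Part (1), extraction.} Work near one component of $\partial\Omega$, say $\bT^{N-1}\times\{0\}$. Following Step (b) of Lemma \ref{traceU}, choose $\eta \in C^\infty(\mathbb{R})$ with $\eta\equiv 1$ on $[0,1/8]$ and $\supp\eta \subset [0,1/2]$, and set $v^\eps(t,x):=u^\eps(t,x)\eta(x_N)$, $\tilde v^\eps(t,x',x_N):=v^\eps(t,x',x_N+\eps)$, extended by zero near $x_N=1$. By (0), $\tilde v^\eps$ is uniformly bounded in $L^2(t_0,t_1;H^1(\Omega))$, hence by the boundedness of the trace $\gamma:H^1(\Omega)\to L^2(\partial\Omega)$ and the identity $\gamma^\eps(u^\eps)=\gamma(\tilde v^\eps)$, the family $\gamma^\eps(u^\eps)$ is uniformly bounded in $L^2(\partial\Omega\times(t_0,t_1))$. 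Along a further subsequence, $\gamma^\eps(u^\eps) \rightharpoonup w$ weakly in $L^2(\partial\Omega\times(t_0,t_1))$ for some $w$.

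\emph{Identification $w=\gamma(u)$.} For an arbitrary $\psi \in L^2(\partial\Omega\times(t_0,t_1))$, the fundamental theorem of calculus gives
\[
\int_{t_0}^{t_1}\!\!\int_{\partial\Omega}\gamma^\eps(v^\eps)\,\psi\,\dd S\,\dd t = -\int_{t_0}^{t_1}\!\!\int_{\Omega_\eps}\frac{\partial v^\eps}{\partial x_N}\,\tilde\psi\,\dd x\,\dd t,
\]
where $\tilde\psi(t,x',x_N):=\psi(t,x')\in L^2(\Omega\times(t_0,t_1))$. Now repeat the splitting argument of Step (c) of Lemma \ref{traceU}: on $\Omega_\delta$ use the weak convergence of $\partial_{x_N} v^\eps$ to $\partial_{x_N} v$; on $\Omega_\eps\setminus\Omega_\delta$ apply Cauchy--Schwarz combined with the uniform energy bound and $\|\tilde\psi\|_{L^2((\Omega\setminus\Omega_\delta)\times(t_0,t_1))}=O(\sqrt{\delta})\|\psi\|$; send $\eps\to 0^+$ before $\delta\to 0^+$. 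Identifying the limit of the right-hand side with $\int\!\int_{\partial\Omega}\gamma(v)\psi = \int\!\int_{\partial\Omega}\gamma(u)\psi$ completes the proof. The principal (mild) obstacle is the careful bookkeeping of the order of limits in Bochner spaces; since the test function $\psi$ is independent of $\eps$, time integration commutes with all the relevant operations and no new analytical idea beyond Lemma \ref{traceU} is needed.
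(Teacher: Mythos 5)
Your proposal is correct and follows essentially the same route as the paper: reduce to one boundary component via a cut-off, represent $\gamma^\eps(v^\eps)$ through $-\int_\eps^1 \partial_{x_N}v^\eps\,\dd s$, and pass to the limit using the three-term splitting over $\Omega_\eps\setminus\Omega_\delta$, $\Omega_\delta$, $\Omega\setminus\Omega_\delta$ with the uniform time-integrated energy bound, sending $\eps\to 0$ before $\delta\to 0$. The only (harmless) difference is your preliminary extraction of a weakly convergent subsequence of traces, which the paper omits since the representation formula already yields weak convergence against every $\psi\in L^2(\partial\Omega\times(t_0,t_1))$.
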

\begin{proof} {\it Step (a).} We proceed as in the previous lemma to show (0).

{\it Step (b).} Similarly to the previous lemma, we consider just one component of  $\partial\Omega$, we choose $\bT^{N-1} \times \{0\}$.
We show that $\frac{\partial v^\eps}{\partial x_N} $ converges weakly to $\frac{\partial v}{\partial x_N}$ in $L^2(t_0, t_1: H^1(\Omega))$. More precisely, we have
$$
\lim_{\eps\to 0}\int_{t_0}^{t_1} \int_{\Omega_\eps} \frac{\partial v^\eps}{\partial x_N} \vfi \,dxdt =
\int_{t_0}^{t_1}\int_{\Omega} \frac{\partial v}{\partial x_N} \vfi \,dxdt.
$$
Indeed,  let us fix $\eta>0.$ We will consider only those $\delta>0$ for which 
\begin{equation}\label{de-tai}
\int_{t_0}^{t_1}\int_{\Omega\setminus \Omega_\delta} \left|\frac{\partial v}{\partial x_N} \right|^2 \,dxdt \le \eta.
\end{equation}
Then, for any $\vfi \in L^2(\Omega\times(t_0,t_1))$ and $\eps<\delta$ we have
\begin{eqnarray*}
&&\int_{t_0}^{t_1}\int_{\Omega_\eps} \frac{\partial v^\eps}{\partial x_N} \vfi \,dx dt
- \int_{t_0}^{t_1}\int_{\Omega} \frac{\partial v}{\partial x_N} \vfi \,dxdt 
\\ &=& 
\int_{t_0}^{t_1}\int_{\Omega_\eps\setminus\Omega_\delta} \frac{\partial v^\eps}{\partial x_N} \vfi \,dx dt+
\int_{t_0}^{t_1}\int_{\Omega_\delta}
\left( \frac{\partial v^\eps}{\partial x_N}- \frac{\partial v^\eps}{\partial x_N}\right)
\vfi \,dxdt
-\int_{t_0}^{t_1} \int_{\Omega_\eps\setminus\Omega_\delta} \frac{\partial v}{\partial x_N} \vfi \,dxdt 
\\ &= & I_1 + I_2 + I_3.
\end{eqnarray*}
Our assumption on the uniform boundedness of energies and the Cauchy-Schwarz inequality imply that
$$
| I_1| \le \sqrt 2 \left(\int_{t_0}^{t_1} E_\eps(v^\eps) \right)^{1/2}
\| \vfi\|_{L^2((\Omega\setminus\Omega_\delta) \times(t_0, t_1))}\le \eta,
$$
after restricting $\delta$ one more time. By the same token, estimate (\ref{de-tai}) yields $|I_3|\le \sqrt\eta \| \vfi \|_{L^2((\Omega\setminus\Omega_\delta) \times(t_0, t_1))}$. Finally, since $v^\eps$ converges weakly to $v$  in $L^2(t_0,t_1;H^1(\Omega_\delta))$, for all $\delta>0$, we deduce that $I_2$ goes to zero when $\eps \to 0$.

{\it Step (d)} Since $\gamma(v^\eps(\cdot + \eps)) = \gamma^\eps(v^\eps)$, the formula
$$
\gamma^\eps(v^\eps(x)) = - \int_\eps^1 \frac\partial{\partial x
_N} v^\eps(x,s)\, ds
$$
shows that the weak convergence of $\frac{\partial v^\eps}{\partial x_N}$ implies that $\gamma^\eps(v^\eps)$ converges weakly to $\gamma(v)$ in   $L^2(t_0,t_1;L^2(\partial\Omega_\delta))$
\end{proof}

We have just shown that traces $ \gamma^\varepsilon(u^\varepsilon)$ converge to $\gamma(u)$. However, we will rather deal with averages, defined in (\ref{def-we}) not $ \gamma^\varepsilon(u^\varepsilon)$. At the same time we may choose a convenient topology. We also notice that our result depends on the coefficient $a_\eps$.
Our main observation is the following, 

\begin{lemma}\label{lem11} Let us suppose that
$$
\sup_{\eps>0} E_\eps (u^\eps) \le M<\infty.
$$
(a) If  we take $p= 1$ in (\ref{asa_nondeg}),
then
$$
 \lim_{\varepsilon\to 0}\gamma^\varepsilon(u^\varepsilon) =
 \lim_{\varepsilon\to 0} m_\varepsilon(u^\eps)\qquad\hbox{ in }L^2(\partial\Omega).
$$
(b) If we take $p= 1/2$ in (\ref{asa_nondeg}),
then
$$
 \lim_{\varepsilon\to 0}\gamma^\varepsilon(u^\varepsilon) =
 \lim_{\varepsilon\to 0} m_\varepsilon(u^\eps)\qquad\hbox{ in }L^1(\partial\Omega).
$$ 
\end{lemma}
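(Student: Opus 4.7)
The plan is to reduce both parts to a pointwise Cauchy--Schwarz estimate on the difference $m_\eps u^\eps - \gamma^\eps u^\eps$, with the weight $a_\eps$ absorbing one half and the non-degeneracy condition (\ref{asa_nondeg}) controlling the other half. I will work only on the component $\bT^{N-1}\times\{0\}$ of $\partial\Omega$, the other being symmetric.

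The starting identity is
\[
(m_\eps u^\eps - \gamma^\eps u^\eps)(x') \;=\; \frac{1}{\eps}\int_0^\eps \bigl[u^\eps(x',s) - u^\eps(x',\eps)\bigr]\dd s \;=\; -\frac{1}{\eps}\int_0^\eps\!\!\int_s^\eps \frac{\partial u^\eps}{\partial x_N}(x',\sigma)\dd \sigma\dd s,
\]
so that weighted Cauchy--Schwarz in $\sigma$ gives
\[
\left|\int_s^\eps \frac{\partial u^\eps}{\partial x_N}(x',\sigma)\dd\sigma\right|^2 \;\le\; \left(\int_s^\eps \frac{\dd\sigma}{a_\eps(\sigma)}\right)\cdot B_\eps(x'), \qquad B_\eps(x'):=\int_0^\eps a_\eps(\sigma)\Bigl|\tfrac{\partial u^\eps}{\partial x_N}(x',\sigma)\Bigr|^2\dd\sigma.
\]
Note that $\int_{\bT^{N-1}} B_\eps(x')\dd x' \le 2E_\eps(u^\eps) \le 2M$ by assumption, uniformly in $\eps$.

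For part (a) I apply Cauchy--Schwarz a second time in $s$ to the outer average. This produces
\[
|(m_\eps u^\eps - \gamma^\eps u^\eps)(x')|^2 \;\le\; B_\eps(x')\cdot\frac{1}{\eps}\int_0^\eps\!\left(\int_s^\eps\!\frac{\dd\sigma}{a_\eps(\sigma)}\right)\!\dd s,
\]
and integrating in $x'$ yields
\[
\|m_\eps u^\eps - \gamma^\eps u^\eps\|_{L^2(\partial\Omega)}^2 \;\le\; 2M\cdot\frac{1}{\eps}\int_0^\eps\!\left(\int_s^\eps\!\frac{\dd\sigma}{a_\eps(\sigma)}\right)\!\dd s \;\longrightarrow\; 0,
\]
by (\ref{asa_nondeg}) with $p=1$. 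Combined with Lemma \ref{traceU}(1), this gives the claimed convergence in $L^2(\partial\Omega)$.

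For part (b) I instead keep the square root inside and integrate directly: the bound
\[
|(m_\eps u^\eps - \gamma^\eps u^\eps)(x')| \;\le\; B_\eps(x')^{1/2}\cdot\frac{1}{\eps}\int_0^\eps\!\left(\int_s^\eps\!\frac{\dd\sigma}{a_\eps(\sigma)}\right)^{\!1/2}\!\dd s
\]
integrated in $x'$ with Cauchy--Schwarz on $B_\eps^{1/2}$ gives
\[
\|m_\eps u^\eps - \gamma^\eps u^\eps\|_{L^1(\partial\Omega)} \;\le\; (2M|\bT^{N-1}|)^{1/2}\cdot\frac{1}{\eps}\int_0^\eps\!\left(\int_s^\eps\!\frac{\dd\sigma}{a_\eps(\sigma)}\right)^{\!1/2}\!\dd s,
\]
which vanishes by (\ref{asa_nondeg}) with $p=1/2$. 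The main technical point here is simply identifying the correct grouping of factors so that exactly the quantity appearing in the non-degeneracy hypothesis shows up on the right; the rest is routine Cauchy--Schwarz. Weakening from $p=1$ to $p=1/2$ costs one factor of $B_\eps^{1/2}$, which is only integrable to the first power, forcing the conclusion to degrade from $L^2$ to $L^1$.
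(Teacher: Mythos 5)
Your proof is correct and follows essentially the same route as the paper: the identity $m_\eps u^\eps-\gamma^\eps u^\eps = -\frac1\eps\int_0^\eps\int_s^\eps \partial_{x_N}u^\eps$, weighted Cauchy--Schwarz with the factor $a_\eps^{1/2}/a_\eps^{1/2}$, and then either a second Cauchy--Schwarz in $s$ (for $L^2$, $p=1$) or integration of the square root against $B_\eps^{1/2}$ (for $L^1$, $p=1/2$). Your closing remark on where the loss from $L^2$ to $L^1$ occurs is a nice explanatory addition, but the mathematical content coincides with the paper's argument.
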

\begin{proof} Part (a). Since the components of the boundary of $\Omega$ are diffeomorphic, we present the argument only for $\bT^{N-1}\times\{0\}$.

We shall estimate
\begin{eqnarray*}
\|\gamma^\varepsilon(u^\varepsilon) - m_\eps(u^\eps)\|^2_{L^2}&=&\int_{\partial\Omega} | \gamma^\varepsilon(u^\varepsilon) - \frac1\varepsilon \int_0^\varepsilon u^\varepsilon \,dx_N|^2\, d\cH^{N-1}\\ &=&
\int_{\partial\Omega} 
| \frac1\varepsilon \int_0^\varepsilon  (u^\varepsilon (x', \varepsilon) - 
u^\varepsilon(x', s) \,ds|^2\, d\cH^{N-1} \\&=&\frac1{\varepsilon^2}
\int_{\partial\Omega} 
|  \int_0^\varepsilon  (u^\varepsilon (x', \varepsilon) - 
u^\varepsilon(x', s) \,ds|^2\, d\cH^{N-1} \\&=&
\frac 1{\varepsilon^2} \int_{\partial\Omega}
\left| \int_0^\varepsilon\int_s^\varepsilon \frac{\partial u^\varepsilon}{\partial x_N}(x', y) \,dyds \right|^2 \,d\cH^{N-1}.
\end{eqnarray*}
Next, we apply twice the Cauchy-Schwarz inequality to obtain,
\begin{eqnarray*}
\|\gamma^\varepsilon(u^\varepsilon) - m_\eps(u^\eps)\|^2_{L^2} &\le &
\frac \varepsilon{\varepsilon^2} \int_{\partial\Omega}
\int_0^\varepsilon\left|\int_s^\varepsilon  \frac{\partial u^\varepsilon}{\partial x_N}(x', y) \,dy \right|^2 \,dsd\cH^{N-1}\\
&= & 
\frac 1{\varepsilon} \int_{\partial\Omega}
\int_0^\varepsilon \left|\int_s^\varepsilon \frac 
{a_\eps^{1/2}(y)}{a_\eps^{1/2}(y)}
\frac{\partial u^\varepsilon}{\partial x_N}(x', y)  \right|^2 \,dydsd\cH^{N-1}\\&\le&
\frac1\eps
 \int_{\partial\Omega}
\int_0^\varepsilon
\int_s^\eps \frac1{a_\eps(y)}\, dyds
\int_0^\varepsilon 
a_\eps(y)
\left| \frac{\partial u^\varepsilon}{\partial x_N}(x', y)  \right|^2 \,dsdyd\cH^{N-1}.
\end{eqnarray*}
Since we take $p=1$ in (\ref{asa_nondeg}),
%
we obtain
\begin{equation*}
\|\gamma^\varepsilon(u^\varepsilon) - m_\eps(u^\eps)\|_{L^2}^2 
\le M \frac1\eps
 \int_{\partial\Omega}
\int_0^\varepsilon
\int_s^\eps \frac1{a_\eps(y)}\, dyds \stackrel {\eps\to 0}\longrightarrow 0. 
\end{equation*}
Computations for part (b) follow the same lines,
$$
\|\gamma^\varepsilon(u^\varepsilon) - m_\eps(u^\eps)\|_{L^1}
\le \frac 1\eps \int_0^\eps  \left(\int_s^\eps \frac {d\sigma}{a_\eps(\sigma)}
\right)^{1/2}
\left( \int_{\partial\Omega}\int_s^\eps a_\eps(\sigma) 
\left|\frac{\partial u^\eps}{\partial x_N}\right|^2 d\,\sigma\right)^{1/2} \,ds.
$$
Our claim follows.
\end{proof}
\begin{remark}\label{pot-a}
A simple choice of $a_\eps$ is given by 
$$a_\varepsilon (d) = \min\left(1, d^\alpha/\varepsilon^\beta\right)
	\quad\text{for some}\quad \alpha>1,\beta>0. $$
 Then, convergence \eqref{asa_nondeg} holds whenever $\alpha<\frac{1}{p}+1$ and $\beta+1>\alpha$. 
\end{remark}

Of course, we may state an integrated in time version of this Lemma with a weaker convergence in the conclusion. Since we integrate the pointwise estimates, the proof is the same.

\begin{lemma}\label{lem11-int} Let us suppose that there exist $0\le t_0< t_1<\infty$ such that
$$
\sup_{\eps>0} \int_{t_0}^{t_1}E_\eps (u^\eps(s)) \dd s <\infty.
$$
(a) If  we take $p= 1$ in (\ref{asa_nondeg}),
then
$$
 \lim_{\varepsilon\to 0}(\gamma^\varepsilon(u^\varepsilon) - m_\varepsilon(u^\eps)) =0 
 \qquad\hbox{ in }L^2(\partial\Omega\times(t_0, t_1)).
$$
(b) If we take $p= 1/2$ in (\ref{asa_nondeg}),
then
$$
 \lim_{\varepsilon\to 0}(\gamma^\varepsilon(u^\varepsilon) - m_\varepsilon(u^\eps)) =0\qquad\hbox{ in }L^1(\partial\Omega\times(t_0, t_1)).
$$ 
\end{lemma}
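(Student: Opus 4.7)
The strategy is exactly the one hinted at in the paragraph preceding the statement: the bounds in Lemma \ref{lem11} are obtained pointwise in the $x'$ variable and, once we re-read them carefully, pointwise in $t$ as well. The only thing that intervened there was the $L^\infty$-in-$\eps$ control of $E_\eps(u^\eps)$, which we now replace by the $L^1$-in-$t$-and-$L^\infty$-in-$\eps$ control $\sup_\eps \int_{t_0}^{t_1} E_\eps(u^\eps(s))\dd s <\infty$. This replacement forces us to use Cauchy–Schwarz in the $t$ variable in part (b), but nothing more.

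For part (a), I would apply the Cauchy–Schwarz chain of Lemma \ref{lem11} with $u^\eps$ replaced by $u^\eps(\cdot,t)$ for almost every $t\in(t_0,t_1)$, obtaining the pointwise-in-$t$ estimate
\[
\|\gamma^\eps(u^\eps(t)) - m_\eps(u^\eps(t))\|_{L^2(\partial\Omega)}^2 \leq \frac{C}{\eps}\left(\int_0^\eps \int_s^\eps \frac{dy}{a_\eps(y)}\dd s\right) E_\eps(u^\eps(t)),
\]
where $C$ depends only on $|\bT^{N-1}|$ and the number of boundary components. Integrating over $t\in(t_0,t_1)$ and using the hypothesis together with \eqref{asa_nondeg} for $p=1$ yields
\[
\int_{t_0}^{t_1}\|\gamma^\eps(u^\eps(t)) - m_\eps(u^\eps(t))\|_{L^2(\partial\Omega)}^2\dd t \leq \frac{C}{\eps}\left(\int_0^\eps \int_s^\eps \frac{dy}{a_\eps(y)}\dd s\right) \int_{t_0}^{t_1} E_\eps(u^\eps(t))\dd t \xrightarrow[\eps\to 0]{} 0,
\]
which is the desired convergence in $L^2(\partial\Omega\times(t_0,t_1))$.

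For part (b), the same computation performed for fixed $t$ produces
\[
\|\gamma^\eps(u^\eps(t)) - m_\eps(u^\eps(t))\|_{L^1(\partial\Omega)} \leq \frac{C}{\eps}\left(\int_0^\eps\left(\int_s^\eps \frac{d\sigma}{a_\eps(\sigma)}\right)^{1/2}\dd s\right) \sqrt{E_\eps(u^\eps(t))}.
\]
Here the only subtlety is that $\sqrt{E_\eps(u^\eps(\cdot))}$ need not be uniformly integrable pointwise in $t$, but by Cauchy–Schwarz in time one has
\[
\int_{t_0}^{t_1}\sqrt{E_\eps(u^\eps(t))}\dd t \leq \sqrt{t_1-t_0}\,\left(\int_{t_0}^{t_1} E_\eps(u^\eps(t))\dd t\right)^{1/2},
\]
which is bounded uniformly in $\eps$ by hypothesis. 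Combining this with \eqref{asa_nondeg} for $p=1/2$ gives the convergence to zero in $L^1(\partial\Omega\times(t_0,t_1))$.

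The only conceptual point worth flagging is that the weaker hypothesis (integrated instead of pointwise energy bound) prevents an $L^\infty$-in-$t$ conclusion: in particular in (b) we pay by an $L^1$ norm in $t$, and using Cauchy–Schwarz in $t$ we cannot improve it to $L^2$. This is the reason why the statement records only integrated convergence, and it is not really an obstacle so much as the natural ceiling of the method.
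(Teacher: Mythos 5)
Your proposal is correct and follows exactly the route the paper takes: the paper's entire proof of Lemma \ref{lem11-int} is the remark that one integrates the pointwise-in-$t$ estimates of Lemma \ref{lem11} over $(t_0,t_1)$, which is what you do. Your extra observation that part (b) requires a Cauchy--Schwarz in time to control $\int_{t_0}^{t_1}\sqrt{E_\eps(u^\eps(t))}\,\dd t$ by the integrated energy is a correct and slightly more explicit rendering of the same argument.
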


\section{Weak solutions}
The average operator $m_\eps$ defined in (\ref{def-we}) will play the major role in this section. Moreover, we will see the significance of the behavior of $\phi$ for small $\eps$. Namely, we assume that the following limit exists, although it may be infinite:
\begin{equation}\label{df-KA}
\lim_{\eps\to 0} \phi(\eps) \eps = \kappa \in [0, \infty] .
\end{equation}

\subsection{Abstract Cauchy problem} 
In the setting of linear equations that we consider here, there are well-known existence results for weak solutions based on the Galerkin method or on a parabolic version of the Lax-Milgram's lemma due to Lions. Let us recall the statements from \cite[Section III.2]{showalter} in an abbreviated version that is sufficient for our needs. 

Let $H$ be a Hilbert space identified with its dual and let $V$ be a separable Hilbert space with dual $V'$. We will denote by $(\cdot,\cdot)$ the inner product in $H$ and by $\langle\cdot,\cdot\rangle$ the pairing between $V'$ and $V$. Assume that $V$ is continuously and densely embedded in $H$. The transpose of this embedding gives an embedding of $H$ into $V'$. Let $A \colon V \to V'$ be a bounded linear operator. If we write  $\mathcal V := L^2(0,T;V)$, then $\mathcal V$ is a Hilbert space with dual $\mathcal V' = L^2(0,T; V')$. We recall \cite[Proposition III.1.2.]{showalter} that $\mathcal W : = \{v \in \mathcal V \colon v_t \in \mathcal V'\}$ is continuously embedded in $C([0,T], H)$. In fact, any $v \in \mathcal W$ is absolutely continuous and 
\[\frac{1}{2} \frac{\dd}{\dd t} \|v\|^2_H = \langle v_t, v \rangle \quad \text{for a.\,e.\ } t \in (0,T).\]

Given $u_0 \in H$, we consider the problem of finding such $u \in \mathcal V$ that 
\begin{equation} \label{weak_eqn} 
u_t + Au = 0, \quad u(0) = u_0. 
\end{equation}
Note that implicit in \eqref{weak_eqn} is condition $u_t \in \mathcal V'$. However, \eqref{weak_eqn} is equivalent to a seemingly weaker formulation, see \cite[Proposition III.2.1]{showalter},
\begin{multline} \label{weak_eqn_int} 
- \int_0^T \left(u(t), \varphi_t(t)\right) \dd t + \int_0^T \left\langle A u(t), \varphi(t)\right\rangle \dd t = \left(u_0, \varphi(0)\right) \\
\text{for every } \varphi \in \Phi:=\{v \in \mathcal V \colon v_t \in L^2(0,T;H), 
\ \varphi(T)=0\} . 
\end{multline}
In particular, any solution $u$ to \eqref{weak_eqn_int} belongs to $\mathcal W$ and satisfies the energy equality
\begin{equation} \label{energy_eq}
\frac{1}{2} \|u(t)\|_H^2 + \int_0^t \langle A u(s), u(s)\rangle \dd s = \frac{1}{2} \|u_0\|_H^2 \quad \text{for } t \in [0,T]. 
\end{equation} 
We have, see \cite[Proposition III.2.3]{showalter},
\begin{proposition}\label{weak_ex} 
Suppose that there exists $c > 0$ such that 
\begin{equation} \label{coercivity} 
\langle A v, v \rangle + \|v\|_H^2 \geq c \|v\|_V^2 \quad \text{for } v \in V.
\end{equation} 
Then, for any $u_0 \in H$ there exists a unique solution to \eqref{weak_eqn} (equivalently \eqref{weak_eqn_int}). 
\end{proposition}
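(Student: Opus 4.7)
The plan is to establish Proposition~\ref{weak_ex} by the Faedo--Galerkin scheme: I would exploit separability of $V$ to reduce the infinite-dimensional evolution to a family of finite-dimensional linear ODEs, derive uniform a priori bounds from the coercivity assumption \eqref{coercivity}, and then extract a weak limit. Uniqueness will come directly from the energy equality \eqref{energy_eq}.

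First I would fix a countable total set $\{e_k\}_{k=1}^\infty \subset V$ with linearly independent initial segments, and for each $n$ seek $u^n(t) = \sum_{k=1}^n c_k^n(t)\, e_k$ satisfying
\begin{equation*}
(u^n_t(t), e_k) + \lb A u^n(t), e_k \rb = 0, \quad k=1,\dots,n, \qquad u^n(0) = P_n u_0,
\end{equation*}
where $P_n$ denotes the $H$-orthogonal projection onto $\mathrm{span}\{e_1,\dots,e_n\}$. After inverting the Gram matrix of $\{e_k\}$ in $H$, this is a linear ODE system with constant coefficients in the $c_k^n$, hence it admits a unique global smooth solution.

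Next, testing against $u^n(t)$ and invoking \eqref{coercivity} produces the differential inequality
\begin{equation*}
\tfrac12 \tfrac{\dd}{\dd t}\|u^n\|_H^2 + c\|u^n\|_V^2 \leq \|u^n\|_H^2,
\end{equation*}
so Gronwall gives $\|u^n(t)\|_H \leq e^{T}\|u_0\|_H$ and, after integrating in time, a uniform bound on $u^n$ in $\mathcal V$. Since $A$ is bounded, $Au^n$ and thus $u_t^n = -A u^n$ are uniformly bounded in $\mathcal V'$. Passing to a subsequence with $u^n \rightharpoonup u$ in $\mathcal V$ and $u^n_t \rightharpoonup u_t$ in $\mathcal V'$, I note that the induced map $\mathcal V \ni v \mapsto (t \mapsto Av(t)) \in \mathcal V'$ is bounded linear, hence weakly continuous, so $Au^n \rightharpoonup Au$ in $\mathcal V'$. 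Testing the Galerkin identity against approximations $\varphi^n(t) = \sum_{k=1}^n \alpha_k(t)\, e_k$ of any $\varphi \in \Phi$, integrating by parts in time to expose the boundary term $(P_n u_0, \varphi^n(0))$, and letting $n \to \infty$ yields \eqref{weak_eqn_int}; the embedding $\mathcal W \hookrightarrow C([0,T];H)$ recorded before the proposition then gives $u \in \mathcal W$ and $u(0) = u_0$.

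For uniqueness, if $u^1, u^2$ are two solutions, set $w := u^1 - u^2$. Then $w(0) = 0$ and \eqref{energy_eq}, combined with the rewritten coercivity $\lb Aw, w\rb \geq -\|w\|_H^2$, gives $\|w(t)\|_H^2 \leq 2\int_0^t \|w(s)\|_H^2 \dd s$, and Gronwall forces $w \equiv 0$. The main technical obstacle I anticipate is the limit passage in the Galerkin formulation against test functions in $\Phi$: one has to choose the approximations $\varphi^n$ so as to converge to $\varphi$ simultaneously in $\mathcal V$ and in $H^1(0,T;H)$ with $\varphi^n(T) = 0$, in order that the time-derivative pairing converges and that the boundary term $(P_n u_0, \varphi^n(0)) \to (u_0, \varphi(0))$ is recovered cleanly.
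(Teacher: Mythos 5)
Your Galerkin argument is sound, but note that the paper does not prove this proposition at all: it is quoted verbatim from \cite[Proposition III.2.3]{showalter}, and the surrounding text merely remarks that such results follow either from the Galerkin method or from Lions' parabolic generalization of the Lax--Milgram lemma. You have written out the first of these two routes in full, so you are supplying a self-contained proof where the paper supplies a citation; the substance (finite-dimensional reduction, the energy estimate from \eqref{coercivity} plus Gronwall, weak compactness in $\mathcal V$, limit passage in the integrated formulation \eqref{weak_eqn_int}, and uniqueness via \eqref{energy_eq}) is exactly what one would expect and is correct.

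One assertion in the middle is false as stated, though it is not load-bearing: for the Galerkin approximation one does \emph{not} have $u^n_t = -Au^n$, since the equation $(u^n_t, e_k) + \langle Au^n, e_k\rangle = 0$ holds only against the first $n$ basis vectors; $u^n_t$ is a projection of $-Au^n$ onto the span, and for a general basis this projection need not be uniformly bounded from $V'$ to $V'$, so the claimed uniform bound on $u^n_t$ in $\mathcal V'$ and the extraction $u^n_t \rightharpoonup u_t$ are not justified. Fortunately your actual limit passage does not use this: you integrate by parts in time at the finite-dimensional level and test against $\varphi^n(t)=\sum_{k\le n}\alpha_k(t)e_k$, so only $u^n \rightharpoonup u$ in $\mathcal V$ and $Au^n \rightharpoonup Au$ in $\mathcal V'$ are needed, and the membership $u\in\mathcal W$ with $u(0)=u_0$ is then recovered from \eqref{weak_eqn_int} via the equivalence recorded before the proposition. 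I would simply delete the sentence about $u^n_t$; the density of the tensor-product test functions in $\Phi$, which you correctly identify as the remaining technical point, is standard.
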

We will apply this proposition to problems we are interested in.

\subsection{The approximate problem}

The approximate problems with a pronounced boundary layer are of the form
\begin{equation}\label{s2-strong}
\begin{array}{ll}
b_\varepsilon u^\eps_t = \di( a_\eps\nabla u^\eps ) & \text{in}\quad (0,T) \times \Omega, \\
a_\eps   \partial u^\eps/\partial\nu = 0  & \text{on}\quad (0,T) \times \partial\Omega, \\
u^\eps(x,0) =u_0(x) & \text{for}\quad x\in \Omega. 
\end{array}
\end{equation}
We take $b_\eps$ defined in (\ref{def-be}).

In this section we do not discuss the meaning of (\ref{s2-strong}$_2$) for weak solutions. However, for the strong solutions the situation is different, see Lemma \ref{lem1.1}. In fact the role of the boundary conditions here is to make the boundary integrals of the weak form vanish. 

System \eqref{s2-strong} can be set as an abstract Cauchy problem with choices 
\[H_\eps = L^2_\eps(\Omega),
\]
where $L^2_\eps(\Omega)$ was introduced in (\ref{df-Le}) and
\[V_\eps = \left\{u \in L^2(\Omega) \colon \nabla u \in L^2_{loc}(\Omega), \int_{\Omega} a_\eps |\nabla u|^2 < \infty\right\}, \quad \langle A_\eps u^1, u^2 \rangle = \int_\Omega a_\eps \nabla u^1 \cdot \nabla u^2.\]
Here and hereafter we often suppress $\dd x$, $\dd y$, $\dd t$, $\dd s$ at the end of integral if there is no risk of confusion. Our choices of $H_\eps$, $V_\eps$, $A_\eps$ correspond to: 
\begin{definition} \label{weak_bl_def}
We say that $u \in L^2(0,T;V_\eps)$ is a weak solution to \eqref{s2-strong} with initial datum $u_0 \in H_\eps$ if 
\begin{equation} \label{s2-weak}
 \int_\Omega b_\eps u_0 \varphi(0) + \int_0^T\!\! \int_\Omega b_\eps u \varphi_t  + \int_0^T \!\! \int_\Omega a_\eps \nabla u \cdot \nabla \varphi = 0 \quad \text{for every } \varphi \in \Phi_\eps, 
\end{equation} 
where 
\[\Phi_\eps = \{ \varphi \in L^2(0,T; V_\eps) \colon \varphi_t \in L^2(0,T; H_\eps),\ \varphi(T)=0\}.\]
\end{definition}
Proposition \ref{weak_ex} yields: 
\begin{proposition} \label{weak_bl_ex} 
 For any $u_0 \in H_\eps$ there exists exactly one weak solution to \eqref{s2-strong}. 
\end{proposition}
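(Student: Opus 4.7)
The plan is to recognize \eqref{s2-strong} as an instance of the abstract Cauchy problem \eqref{weak_eqn}--\eqref{weak_eqn_int} and invoke Proposition \ref{weak_ex}. Comparing with Definition \ref{weak_bl_def}, one takes $H = H_\eps$, $V = V_\eps$, and $A = A_\eps$, so it suffices to verify the standing hypotheses: that $V_\eps$ is a separable Hilbert space continuously and densely embedded in $H_\eps$, that $A_\eps \colon V_\eps \to V_\eps'$ is bounded and linear, and that the coercivity condition \eqref{coercivity} holds.

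I would first equip $V_\eps$ with the inner product $(u,v)_{V_\eps} := \int_\Omega u v + \int_\Omega a_\eps \nabla u \cdot \nabla v$. For completeness, if $(u_n)$ is Cauchy in $V_\eps$, then $u_n \to u$ in $L^2(\Omega)$ and $a_\eps^{1/2} \nabla u_n \to w$ in $L^2(\Omega;\bR^N)$ for some $u \in L^2(\Omega)$ and $w \in L^2(\Omega;\bR^N)$. By \eqref{asa}, $a_\eps$ is continuous and strictly positive on the open set $\Omega$, hence bounded below by a positive constant on every compact subset of $\Omega$; consequently $\nabla u_n$ converges in $L^2_{loc}(\Omega)$, the limit $u$ belongs to $H^1_{loc}(\Omega)$, and $a_\eps^{1/2} \nabla u = w$ in $L^2(\Omega;\bR^N)$, placing $u$ in $V_\eps$. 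Separability is inherited from the isometric embedding $u \mapsto (u, a_\eps^{1/2} \nabla u)$ into the separable space $L^2(\Omega) \times L^2(\Omega;\bR^N)$.

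For the embedding, observe that $b_\eps$ takes only the values $1$ and $\phi(\eps)$, so for each fixed $\eps$ one has $0 < c_\eps \leq b_\eps \leq C_\eps$ with $c_\eps := \min(1, \phi(\eps))$ and $C_\eps := \max(1, \phi(\eps))$; consequently $\|\cdot\|_{H_\eps}$ is equivalent to the standard $L^2(\Omega)$ norm. This immediately gives $V_\eps \hookrightarrow H_\eps$ continuously, and density of $V_\eps$ in $H_\eps$ follows from the inclusion $C_c^\infty(\Omega) \subset V_\eps$ combined with the density of $C_c^\infty(\Omega)$ in $L^2(\Omega)$. Linearity of $A_\eps$ is clear and boundedness follows from Cauchy--Schwarz: $|\langle A_\eps u, v \rangle| \leq \|u\|_{V_\eps}\|v\|_{V_\eps}$. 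For coercivity \eqref{coercivity}, the pointwise bound $b_\eps \geq c_\eps$ gives
\[
\langle A_\eps v, v \rangle + \|v\|_{H_\eps}^2 = \int_\Omega a_\eps |\nabla v|^2 + \int_\Omega b_\eps v^2 \geq \min(1, c_\eps)\, \|v\|_{V_\eps}^2,
\]
so \eqref{coercivity} holds with $c = \min(1, c_\eps) > 0$, and Proposition \ref{weak_ex} applies.

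The statement is essentially a verification with no genuinely hard step. The only point requiring any care is the completeness of $V_\eps$: because the weight $a_\eps$ may degenerate at $\partial\Omega$, one cannot recover $\nabla u$ globally from $a_\eps^{1/2}\nabla u$, so one must localize to the interior and use the uniform lower bound on $a_\eps$ over compacts to identify the limit. Everything else is bookkeeping about weighted $L^2$ spaces.
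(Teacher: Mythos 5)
Your proposal is correct and follows exactly the paper's route: the paper obtains Proposition \ref{weak_bl_ex} by simply invoking the abstract existence result, Proposition \ref{weak_ex}, with the stated choices $H_\eps$, $V_\eps$, $A_\eps$, and offers no further detail. Your verification of the standing hypotheses (completeness and separability of $V_\eps$ via localization on compacts where $a_\eps$ is bounded below, the continuous dense embedding using $\min(1,\phi(\eps))\le b_\eps\le\max(1,\phi(\eps))$, boundedness of $A_\eps$, and coercivity \eqref{coercivity}) is accurate and merely makes explicit what the paper leaves to the reader.
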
 

\subsection{The dynamic boundary problem} 
We aim to prove that weak solutions to \eqref{s2-strong} converge to (yet undefined) weak solutions to \eqref{eq1}, provided that $\kappa \in (0, \infty)$. One way to define them is to consider regular enough solutions to \eqref{eq1} and test them with a function $\varphi \in C^1([0,T]\times \overline{\Omega})$ such that $\varphi(T,\cdot) = 0$. Multiplying equation $u_t = \Delta u$ by $\varphi$, integrating over $\Omega_T$ and integrating by parts yields 
\begin{equation*}
    - \int_\Omega u_0 \varphi(0,\cdot) - \int_0^T \!\! \int_\Omega u \varphi_t = \int_0^T \!\! \int_\Omega u_t \varphi = \int_0^T \!\! \int_\Omega \Delta u \varphi = - \int_0^T \!\! \int_\Omega \nabla u \cdot \nabla \varphi + \int_0^T \!\! \int_{\partial \Omega} \frac{ \partial u}{\partial \nu} \varphi.      
\end{equation*} 
Taking into account that $\kappa u_t + \frac{ \partial u}{\partial \nu} = 0$ on $\partial \Omega$ leads us to 
\begin{equation*}
    \int_0^T \!\! \int_{\partial \Omega} \frac{ \partial u}{\partial \nu} \varphi = - \kappa \int_0^T \!\! \int_{\partial \Omega} u_t \varphi =  \kappa \int_{\partial \Omega} u_0 \varphi(0,\cdot) + \kappa \int_0^T \!\! \int_{\partial \Omega} u \varphi_t.
\end{equation*} 
Summing up, we obtain the following equality constituting a naive notion of weak solution.
\begin{equation} \label{weak_naive}
\int_\Omega u_0 \varphi(0,\cdot) + \kappa \int_{\partial \Omega}\gamma  u_0 \varphi(0,\cdot) + \int_0^T \!\! \int_\Omega u \varphi_t + \kappa \int_0^T \!\! \int_{\partial \Omega} \gamma u \varphi_t + \int_0^T \!\! \int_\Omega \nabla u \cdot \nabla \varphi = 0.
\end{equation} 
However, it is not immediately clear how to obtain well-posedness for \eqref{weak_naive}. 

On the other hand, system \eqref{eq1} can be expressed 
as an abstract Cauchy problem with the choices 
\begin{equation}\label{df-Xka}
H=H_0^\kappa\!: = (L^2(\Omega) \times  L^2(\partial\Omega), (\cdot, \cdot)_\kappa),\qquad
((u_1, \xi_1), (u_2,\xi_2))_\kappa \! :=
\int_{\Omega} u_1 u_2\, dx + 
\kappa 
\int_{\partial\Omega} \xi_1 \xi_2 \, d\cH^{N-1}.
\end{equation}
\[V=V_0\!: = \left\{(u,w) \in H^1(\Omega)\times L^2(\partial \Omega) \colon \gamma u = w\right\}, \quad \langle A(u, w) , (\varphi, \psi) \rangle = A_0 (u, w) , (\varphi, \psi) \rangle \!:= \int_\Omega \nabla u \cdot \nabla \varphi.\]
Having these definitions, we can introduce: 
\begin{definition} \label{weak_dbc_def}
We say that $(u,w) \in L^2(0,T;V_0)$ is a weak solution to the dynamic boundary value problem (b.v.p.) for the heat equation with initial datum $(u_0, w_0) \in H_0^\kappa$ if 
\begin{multline} \label{weak_dbc}
 \int_\Omega u_0 \varphi(0) + \kappa \int_{\partial \Omega} w_0 \psi(0) + \int_0^T\!\! \int_\Omega u \varphi_t + \kappa \int_0^T\!\! \int_{\partial \Omega} w \psi_t + \int_0^T \!\! \int_\Omega \nabla u \cdot \nabla \varphi = 0 \\ \text{for every } (\varphi, \psi) \in \Phi_0, 
\end{multline} 
where 
\[ \Phi_0 = \{ (\varphi, \psi) \in L^2(0,T; V_0) \colon (\varphi_t, \psi_t) \in L^2(0,T; H_0), \ (\varphi, \psi)(T)=0\}.\]
\end{definition}
Again, by Proposition \ref{weak_ex}, 
\begin{proposition} \label{weak_dbc_ex} 
 For any $(u_0, w_0) \in H_0^\kappa$ there exists exactly one weak solution to the dynamic b.v.p.\ for the heat equation. 
\end{proposition}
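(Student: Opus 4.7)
The plan is to verify that the problem fits the hypotheses of Proposition \ref{weak_ex}, applied with $H = H_0^\kappa$, $V = V_0$ and $A = A_0$ as defined just above the statement. Once the four abstract requirements (Hilbert structure of $H$ and $V$, continuous dense embedding $V \hookrightarrow H$, boundedness of $A$, and the coercivity inequality \eqref{coercivity}) are in place, existence and uniqueness follow immediately, and \eqref{weak_dbc} is then obtained by spelling out \eqref{weak_eqn_int} with the concrete inner products and the concrete bilinear form.

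The Hilbert structure of $H_0^\kappa$ is clear from \eqref{df-Xka} because $\kappa \in (0,\infty)$. On $V_0$ we use the norm $\|(u,w)\|_{V_0}^2 := \|u\|_{H^1(\Omega)}^2 + \|w\|_{L^2(\partial\Omega)}^2$; since $w=\gamma u$ is determined by $u$ and $\gamma\colon H^1(\Omega)\to L^2(\partial\Omega)$ is bounded, this norm is equivalent to $\|u\|_{H^1(\Omega)}$, so $V_0$ is a separable Hilbert space isomorphic to $\gamma^{-1}(L^2(\partial\Omega))\subset H^1(\Omega)$. The continuous embedding $V_0\hookrightarrow H_0^\kappa$ follows directly from the trace inequality:
\[
\|(u,\gamma u)\|_{H_0^\kappa}^2 = \|u\|_{L^2(\Omega)}^2 + \kappa\|\gamma u\|_{L^2(\partial\Omega)}^2 \leq (1+\kappa C_{\mathrm{tr}}^2)\|u\|_{H^1(\Omega)}^2.
\]

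For density, given $(f,g)\in H_0^\kappa$, I would first approximate $g$ in $L^2(\partial\Omega)$ by a sequence $g_n\in C^\infty(\partial\Omega)$, lift each $g_n$ to $G_n\in H^1(\Omega)$ via a bounded right inverse of the trace, and multiply $G_n$ by a cut-off supported in a shrinking tubular neighborhood of $\partial\Omega$ so that $\|G_n\|_{L^2(\Omega)}\to 0$ while $\gamma G_n = g_n$ is preserved. Then approximate $f$ in $L^2(\Omega)$ by $f_n\in C_c^\infty(\Omega)$ and set $u_n := f_n + G_n$; the pair $(u_n,\gamma u_n)\in V_0$ converges to $(f,g)$ in $H_0^\kappa$. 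Boundedness of $A_0$ is immediate from Cauchy--Schwarz: $|\langle A_0(u,w),(\varphi,\psi)\rangle|\leq \|\nabla u\|_{L^2}\|\nabla\varphi\|_{L^2}\leq \|(u,w)\|_{V_0}\|(\varphi,\psi)\|_{V_0}$.

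The main step, and the only one that uses the positivity $\kappa>0$ in an essential way for the equivalence of norms, is the coercivity estimate \eqref{coercivity}. For $(u,w)=(u,\gamma u)\in V_0$,
\[
\langle A_0(u,w),(u,w)\rangle + \|(u,w)\|_{H_0^\kappa}^2 = \int_\Omega|\nabla u|^2 + \int_\Omega u^2 + \kappa\int_{\partial\Omega}|\gamma u|^2 \geq \|u\|_{H^1(\Omega)}^2,
\]
and since $\|w\|_{L^2(\partial\Omega)}\leq C_{\mathrm{tr}}\|u\|_{H^1(\Omega)}$, the right-hand side is comparable to $\|(u,w)\|_{V_0}^2$. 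Proposition \ref{weak_ex} then yields a unique $u\in L^2(0,T;V_0)$ with $u_t\in L^2(0,T;V_0')$ solving $u_t+A_0 u = 0$, $u(0)=(u_0,w_0)$, and rewriting \eqref{weak_eqn_int} with the concrete inner product on $H_0^\kappa$ and operator $A_0$ produces exactly \eqref{weak_dbc} tested against $(\varphi,\psi)\in\Phi_0$. The only delicate point I anticipate is the density argument above — everything else is a direct unpacking of the abstract framework.
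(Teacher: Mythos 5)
Your proposal is correct and follows exactly the route the paper takes: the paper's entire proof is the single line ``Again, by Proposition \ref{weak_ex}'', and you have simply spelled out the verification of its hypotheses (Hilbert structure of $V_0$ via equivalence with the $H^1(\Omega)$-norm, continuous dense embedding into $H_0^\kappa$, boundedness of $A_0$, and coercivity), all of which check out — including the density argument via lifting and cut-offs, which is the only nontrivial point and which you handle correctly since only $L^2\times L^2$ convergence is needed.
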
 
We note that if we restrict our attention to the case when $\varphi \in C^1([0,T]\times \overline\Omega)$ and $w_0 = \gamma u_0$, then \eqref{weak_dbc} becomes \eqref{weak_naive}. However, Proposition \ref{weak_dbc_ex} indicates that, even though $w(t) = \gamma u(t)$ for a.\,e.\ $t \in ]0,T[$ for any solution $(u,w)$, initial data $(u_0, w_0)$ can be chosen independently and different choices of $w_0$ lead to different solutions. Taking this into account, we propose the following relaxation of the definition of weak solution that will be useful in the sequel. 
\begin{proposition} \label{prop_equiv}
Let $(u_0,w_0) \in L^2(\Omega)\times L^2(\partial \Omega)$. Suppose that $u \in L^2(0,T, H^1(\Omega))$. Then $(u, \gamma u)$ is a weak solution to the dynamic b.v.p.\ for the heat equation with initial datum $(u_0, w_0)$ if and only if 
\begin{multline} \label{relaxed_dbc}
 \int_\Omega u_0 \varphi(0) + \kappa \int_{\partial \Omega} w_0 \varphi(0) + \int_0^T\!\! \int_\Omega u \varphi_t + \kappa \int_0^T\!\! \int_{\partial \Omega} \gamma u \varphi_t + \int_0^T \!\! \int_\Omega \nabla u \cdot \nabla \varphi = 0 \\ \text{for every } \varphi \in C^1([0,T]\times \overline{\Omega}) \text{ such that } \varphi(T)=0. 
\end{multline}
\end{proposition}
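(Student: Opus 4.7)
The plan is to split the equivalence into two implications and reduce the nontrivial one to a density statement. For the forward direction, given a weak solution $(u, \gamma u)$, any $\varphi \in C^1([0,T]\times\overline\Omega)$ with $\varphi(T)=0$ gives rise to a pair $(\varphi, \gamma\varphi)$ that lies in $\Phi_0$, since all relevant quantities are bounded, hence in $L^2$. Inserting this pair as $(\varphi,\psi)$ in \eqref{weak_dbc} (and using $w = \gamma u$) produces exactly \eqref{relaxed_dbc}, so no further work is needed in this direction.

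For the converse direction, introduce on $\mathcal E := \{\varphi : (\varphi, \gamma\varphi)\in \Phi_0\}$ the norm
\[
\|\varphi\|_*^2 := \|\varphi\|_{L^2(0,T;H^1(\Omega))}^2 + \|\varphi_t\|_{L^2(0,T;L^2(\Omega))}^2 + \|\gamma\varphi_t\|_{L^2(0,T;L^2(\partial\Omega))}^2.
\]
Each of the integrals appearing in \eqref{weak_dbc} is continuous with respect to $\|\cdot\|_*$: the four integrals over $(0,T)$ by Cauchy--Schwarz, and the two terms at $t=0$ through the continuous embedding $\{v : v, v_t \in L^2(0,T;X)\} \hookrightarrow C([0,T];X)$, applied with $X=L^2(\Omega)$ and with $X=L^2(\partial\Omega)$. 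Hence it suffices to prove that $\mathcal D:=\{\varphi \in C^1([0,T]\times\overline\Omega) : \varphi(T)=0\}$ is dense in $(\mathcal E, \|\cdot\|_*)$; approximating $\varphi \in \mathcal E$ by $\varphi_n \in \mathcal D$ and passing to the limit in \eqref{relaxed_dbc} written for each $\varphi_n$ then yields \eqref{weak_dbc}.

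The density itself is a standard extension--mollification argument, taking advantage of the flat cylinder geometry $\Omega = \bT^{N-1}\times (0,1)$. Given $\varphi\in \mathcal E$, I would (i) extend by zero for $t>T$, which preserves the regularity since $\varphi(T)=0$; (ii) reflect across $t=0$ to obtain a function defined on a neighbourhood of $[0,T]$; (iii) mollify in time with a kernel supported in $[-\eta,0]$, so that the result still vanishes near $t=T$, obtaining a function smooth in $t$ with values in $H^1(\Omega)$; (iv) exploit the flatness of $\partial\Omega$ by reflecting across the faces $\{x_N=0\}$ and $\{x_N=1\}$ to extend in space, and then mollify in $x_N$ while convolving in $x'$ with a smooth periodic kernel on $\bT^{N-1}$. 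This yields a $C^\infty$ approximation which converges in $H^1(\Omega)$ and whose trace on $\partial\Omega$ converges in $L^2(\partial\Omega)$, both uniformly in $t$; since every mollification commutes with $\partial_t$, the same convergences pass to the time derivatives.

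The genuinely delicate point, and the main obstacle, is that a single approximation scheme must simultaneously control $\varphi$, $\varphi_t$, $\gamma\varphi$ and $\gamma\varphi_t$. Plain interior spatial mollification gives no control of the trace, so the boundary-preserving extension (reflection across the flat faces) is essential; this is the only place where the special shape of $\Omega$ enters. Everything else is classical and the proposition follows once the density is in hand.
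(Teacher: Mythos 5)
Your overall architecture is the same as the paper's: the forward implication is immediate because $(\varphi,\gamma\varphi)\in\Phi_0$ for every $\varphi\in C^1([0,T]\times\overline\Omega)$ with $\varphi(T)=0$, and the converse reduces to showing that such $\varphi$ are dense in $\Phi_0$ for the norm you call $\|\cdot\|_*$ (this is exactly the role of Lemma \ref{weak_approx} in the paper). The reduction step and the continuity of all six terms of \eqref{weak_dbc} with respect to $\|\cdot\|_*$ are fine. The gap is in your proof of the density itself, and it sits precisely at the point you single out as "the genuinely delicate point": controlling $\gamma\varphi_t$. Reflection across the flat faces followed by mollification does control $\varphi$ in $L^2(0,T;H^1(\Omega))$ and hence $\gamma\varphi^\delta\to\gamma\varphi=\psi$ in $L^2(0,T;L^2(\partial\Omega))$, because the trace is continuous on $H^1(\Omega)$. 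But your claim that "the same convergences pass to the time derivatives" because mollification commutes with $\partial_t$ is false for the trace term: $(\gamma\varphi^\delta)_t=\gamma(\varphi^\delta_t)$ is a weighted average of $\varphi_t$ over a boundary layer of width $O(\delta)$, and $\varphi_t$ is only in $L^2(0,T;L^2(\Omega))$ — it has no spatial regularity, so these shrinking-layer averages need not converge to $\psi_t$, and need not even stay bounded in $L^2(0,T;L^2(\partial\Omega))$. (A superposition of bumps $\varphi=\sum_k k^{-1}2^{-k/2}\sin(2^kt)\,\zeta(2^kx_N)$ with $\zeta\in C^\infty_c(1/2,1)$ lies in $L^2(0,T;H^1)\cap H^1(0,T;L^2)$ with $\gamma\varphi=0$, hence $\psi_t=0$, yet the layer averages of $\varphi_t$ at scale $2^{-k}$ have $L^2$ norm of order $k^{-1}2^{k/2}\to\infty$.) Without a uniform bound on $(\gamma\varphi^\delta)_t$ you also lose the convergence $\gamma\varphi^\delta(0)\to\psi(0)$ needed for the term $\kappa\int_{\partial\Omega}w_0\psi(0)$.

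The paper's Lemma \ref{weak_approx} circumvents exactly this by \emph{not} reflecting: it squeezes $\varphi$ into $\Omega_h$ and fills the boundary layers $\{x_N<h\}$ and $\{x_N>1-h\}$ with the $x_N$-independent extension of the boundary datum $\psi$ itself. After that, for mollification scales small compared with $h$, the trace of the approximation and its time derivative are literally tangential/temporal mollifications of $\psi$ and $\psi_t$, which do converge in $L^2(0,T;L^2(\partial\Omega))$; the price is the technical verification (done in the paper via the cut-off $\vartheta^\eta$) that the glued function is still weakly differentiable in $x_N$ with the expected derivative. So your scheme needs to be replaced by one in which the approximation agrees with (a regularization of) $\psi$ in a full neighbourhood of $\partial\Omega$, rather than one that merely preserves the trace of $\varphi$.
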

The absence of the second component $w$ in the rephrased definition is just a semantic difference. More importantly, we restrict the class of test functions. It is easy to see that if $(u, \gamma u)$ is a weak solution, then $u$ satisfies \eqref{relaxed_dbc}. The reverse implication is a straightforward consequence of the following approximation lemma.   
\begin{lemma} \label{weak_approx} 
Let $(\varphi, \psi) \in H^1(0,T; L^2(\Omega)\times L^2(\partial \Omega))$ be such that $\varphi \in L^2(0,T;H^1(\Omega))$, $\gamma \varphi = \psi$. Then there exists a sequence $\varphi^k \in C^\infty([0,T]\times \overline{\Omega})$ such that $(\varphi^k, \gamma \varphi^k) \to (\varphi, \psi)$ in $\Phi_0$, i.\,e.
\[ (\varphi^k, \gamma \varphi^k) \to (\varphi, \psi) \text{ in } H^1(0,T; L^2(\Omega)\times L^2(\partial \Omega)), \qquad \varphi^k \to \varphi \text{ in } 
L^2(0,T;H^1(\Omega)).\]
Furthermore, if $(\varphi,\psi)(T)=0$, then $\varphi^k(T) = 0$.
\end{lemma}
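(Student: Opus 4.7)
The plan is to construct $\varphi^k$ by a two-stage regularization: first mollify in the time variable, then in the spatial variable, paying attention to preserving the trace identity and (when relevant) the terminal condition.

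First, extend $(\varphi, \psi)$ to $\bR$ in $t$ by reflection across $t = 0$, setting $\tilde\varphi(t,\cdot) := \varphi(|t|,\cdot)$ for $t \leq 0$, and similarly for $\psi$. For $t > T$, extend by zero when $\varphi(T) = 0$ and $\psi(T) = 0$, and by reflection across $T$ otherwise. Next, extend $\tilde\varphi$ in the $x_N$-direction by even reflection across $x_N = 0$ and $x_N = 1$ to obtain $\bar\varphi$ on $\bT^{N-1} \times [-1, 2]$. Since the boundary traces match from both sides, this preserves $H^1$-regularity and the relation $\gamma\bar\varphi = \tilde\psi$ on the original boundary. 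Periodicity makes extension in $x'$ unnecessary.

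Second, fix a nonnegative mollifier $\rho^\delta$ on $\bR$ of total mass one, supported in $[-\delta, 0]$ when the terminal condition is to be preserved (and symmetric otherwise), and set $\varphi^\delta := \rho^\delta *_t \bar\varphi$. Because the trace operator $\gamma \colon H^1(\Omega) \to L^2(\partial\Omega)$ is bounded linear, it commutes with the Bochner convolution, so $\gamma\varphi^\delta = \rho^\delta *_t \tilde\psi$ and consequently $(\gamma\varphi^\delta)_t = \rho^\delta *_t \tilde\psi_t$. Standard Bochner mollification then yields $\varphi^\delta \to \varphi$ in $H^1(0,T;L^2(\Omega)) \cap L^2(0,T;H^1(\Omega))$ and $\gamma\varphi^\delta \to \psi$ in $H^1(0,T;L^2(\partial\Omega))$ as $\delta \to 0$. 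In the one-sided case, the support of $\rho^\delta$ forces $\varphi^\delta(T) = 0$, as the convolution at $t = T$ samples $\bar\varphi$ only on $[T, T+\delta]$, where the extension vanishes.

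Third, let $\sigma^\eta$ be a standard mollifier on $\bR^N$ and set $\varphi^{\delta,\eta} := \sigma^\eta *_x \varphi^\delta$, restricted to $\Omega$. Since $\varphi^\delta$ is smooth in $t$ with values in $H^1(\bT^{N-1}\times[-1,2])$, we have $\varphi^{\delta,\eta} \in C^\infty([0,T]\times\overline\Omega)$, and $\varphi^{\delta,\eta}(T) = \sigma^\eta *_x \varphi^\delta(T) = 0$ when applicable. As $\eta \to 0$, $\varphi^{\delta,\eta} \to \varphi^\delta$ in $C([0,T]; H^1(\Omega))$ and in $H^1(0,T; L^2(\Omega))$; applying continuity of $\gamma$ pointwise in $t$ to $\varphi^\delta$ and, separately, to $\varphi^\delta_t$, both smooth in $t$ with $H^1$-values, yields $\gamma\varphi^{\delta,\eta} \to \gamma\varphi^\delta$ in $H^1(0,T; L^2(\partial\Omega))$. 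A diagonal extraction $\varphi^k := \varphi^{\delta_k,\eta_k}$ with $\delta_k, \eta_k \downarrow 0$ suitably chosen completes the proof. The only subtlety is the asymmetric temporal mollifier: it is precisely what turns the hypothesis $\varphi(T) = 0$ into $\varphi^k(T) = 0$ without affecting the other convergences; everything else reduces to standard Bochner density.
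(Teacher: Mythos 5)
Your proof is correct, but it takes a genuinely different route from the paper's. The paper handles the central difficulty --- that $\varphi_t$ has no spatial regularity, so one cannot speak of its trace directly --- by a geometric construction: it squeezes $\varphi$ inward via the affine change of variables $x_N \mapsto \frac{x_N-h}{1-2h}$ and fills the boundary layers $\{x_N<h\}$, $\{x_N>1-h\}$ with the $x_N$-independent extension of $\psi$, so that the trace of the approximant and its time derivative are controlled by construction; the price is an explicit verification (the $\vartheta^\eta$ computation) that the glued function is weakly differentiable in $x_N$ across the seams, and this is exactly where the hypothesis $\gamma\varphi=\psi$ enters. You instead resolve the same difficulty by ordering the regularizations correctly: mollifying in time \emph{first} upgrades $\varphi^\delta_t=(\rho^\delta)'*_t\bar\varphi$ to a function that is smooth in $t$ with values in $H^1$, so it has a trace, and the identity $\gamma\varphi^\delta_t=(\rho^\delta*_t\tilde\psi)_t=\rho^\delta*_t\tilde\psi_t$ follows from commutation of the bounded trace operator with the Bochner convolution --- this is where $\gamma\varphi=\psi$ enters in your argument (the even reflection itself needs no trace matching; it preserves $H^1$ across a flat interface automatically). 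The subsequent spatial mollification then only requires continuity of $\gamma$ on $H^1(\Omega)$ applied to $\varphi^\delta(t)$ and $\varphi^\delta_t(t)$ separately. Your approach is shorter and avoids the seam computation entirely; the paper's has the advantage that the approximant's boundary values are literally a mollification of $\psi$, which makes the convergence of $(\gamma\varphi^k)_t$ immediate. Two small points you gloss over but which are routine: passing from pointwise-in-$t$ convergence of $\gamma(\sigma^\eta*\varphi^\delta(t))$ and $\gamma(\sigma^\eta*\varphi^\delta_t(t))$ to convergence in $L^2(0,T;L^2(\partial\Omega))$ needs a uniform bound or the compactness of $\{\varphi^\delta(t)\}_{t\in[0,T]}$ in $H^1$; and the extension by zero past $T$ is admissible in $H^1(0,T;L^2)$ only because $\varphi(T)=0$ in that case, which you do assume there.
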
 
\begin{proof} 
Let us pick any $(\varphi, \psi) \in \Phi_0$. We denote by $\overline \varphi, \overline \psi$ its extensions outside $[0,T]$. Namely, if $(\varphi,\psi)(T)=0$, then we extend the couple $(\varphi,\psi))$ past $T$ by odd reflection, otherwise by even reflection. We extend it for times earlier than $0$ by (say) even reflection. Let $\overline{\varphi}_h$ be given by 
\[\overline{\varphi}_h(t, x', x_N) = \left\{\begin{array}{l} 
\overline{\psi}(t, x', 0) \text{ if } x_N < h, \\ 
\overline{\psi}(t, x', 1) \text{ if } x_N > 1-h, \\ 
\overline{\varphi}\left(t, x', \frac{x_N-h}{1-2h}\right) \text{ otherwise. }
\end{array} \right. \]
Then, we define $\varphi^{\delta', h} \in L^2(0,T;H^1(\Omega))$ and $\varphi^{\delta, \delta', h} \in C^\infty([0,T]\times \overline{\Omega})$ as restrictions to $[0,T]\times\overline{\Omega}$ of 
\[\overline{\varphi}^{\delta', h} = \varrho^{x'}_{\delta'} * (\overline{\varphi}_h), \qquad \overline{\varphi}^{\delta, \delta', h} = \varrho^t_\delta * (\varrho^{x_N}_\delta * (\varrho^{x'}_{\delta'} * (\overline{\varphi}_h))).\]
Here $\varrho_{\delta'}^{x'}$ denotes the standard mollifier in $x'$ direction while $\varrho_\delta^t$, $\varrho_\delta^{x_N}$ denote the mollifiers in $t$ and $x_N$, respectively.

Let us check that $\varphi^{\delta', h} \in \Phi_0$. The non-trivial part of this task is  establishing the weak differentiability in the direction of $x_N$. Let $\vartheta^\eta = \vartheta^\eta(x_N)$ be the continuous function on $\bR$ such that: 
\begin{itemize} 
\item $\vartheta^\eta = 1$ on $[h-\eta,h+\eta]\cup [1-h-\eta, 1 - h + \eta]$, 
\item $\vartheta^\eta = 0$ outside $[h- 2\eta, h+ 2\eta] \cup [1-h- 2\eta, 1-h+ 2\eta]$,
\item $\vartheta^\eta$ coincides with affine functions on $[h- 2\eta,h-\eta]$, $[h+\eta, h+2\eta]$, $[1-h- 2\eta,1-h-\eta]$, and $[1-h+\eta,1-h+2\eta]$. 
\end{itemize} 
Then, for any $w \in C_c^\infty(]0,T[ \times \Omega)$, 
\begin{multline} 
\int_0^T \!\!\! \int_\Omega \varphi^{\delta', h} w_{x_{N}} = \int_0^T \!\!\! \int_\Omega \varphi^{\delta', h} (\vartheta^\eta w)_{x_{N}} + \int_0^T \!\!\! \int_\Omega \varphi^{\delta', h} ((1-\vartheta^\eta) w)_{x_{N}} \\ 
= \int_0^T \!\!\! \int_{\Omega} \varphi^{\delta', h} \vartheta^\eta_{x_{N}} w + \int_0^T \!\!\! \int_\Omega \varphi^{\delta', h} \vartheta^\eta w_{x_{N}} - \int_0^T \!\!\! \int_\Omega \varphi^{\delta', h}_{x_{N}} (1-\vartheta^\eta) w = I_1^\eta + I_2^\eta + I_3^\eta.  
\end{multline}
Using a change of variables, 
\begin{multline*} 
I_1^\eta = \frac{1}{\eta}\int_0^T \!\!\! \int_{\bT^{n-1}} \!\! \int_{h-2\eta}^{h-\eta} \varrho^{x'}_{\delta'} * \psi(\cdot,\cdot,0)\, w - \frac{1-2h}{\eta}\int_0^T \!\!\! \int_{\bT^{n-1}} \!\! \int_{\frac{\eta}{1-2h}}^{\frac{2\eta}{1-2h}} \varrho^{x'}_{\delta'} * \varphi\, w(\cdot,\cdot, h + (1-2h)(\cdot)) \\ + \frac{1-2h}{\eta}\int_0^T \!\!\! \int_{\bT^{n-1}} \!\! \int_{1-\frac{2\eta}{1-2h}}^{1-\frac{\eta}{1-2h}} \varrho^{x'}_{\delta'} * \varphi\, w(\cdot,\cdot, h + (1-2h)(\cdot)) - \frac{1}{\eta}\int_0^T \!\!\! \int_{\bT^{n-1}} \!\! \int_{1- h+\eta}^{1-h+2\eta} \varrho^{x'}_{\delta'} * \psi(\cdot,\cdot,1)\, w.
\end{multline*} 
By a property of trace, 
\begin{multline*} 
\lim_{\eta \to 0^+} I_1^\eta = \int_0^T \!\!\! \int_{\bT^{n-1}} \varrho^{x'}_{\delta'} * \psi(\cdot,\cdot,0)\, w(\cdot,\cdot,h) - \int_0^T \!\!\! \int_{\bT^{n-1}} \gamma(\varrho^{x'}_{\delta'} * \varphi)(\cdot,\cdot,0)\, w(\cdot,\cdot, h) \\ + \int_0^T \!\!\! \int_{\bT^{n-1}} \gamma(\varrho^{x'}_{\delta'} * \varphi)(\cdot,\cdot,1)\, w(\cdot,\cdot, 1-h) - \int_0^T \!\!\! \int_{\bT^{n-1}} \varrho^{x'}_{\delta'} * \psi(\cdot,\cdot,1)\, w(\cdot,\cdot,1-h) = 0 ,
\end{multline*}
because we used here $\gamma(\varrho^{x'}_{\delta'} * \varphi) = \varrho^{x'}_{\delta'} * \gamma\varphi$. It is not difficult to check that $I_2^\eta \to 0$ and 
\[I_3^\eta =  - \int_0^T \!\!\! \int_{\Omega_h} \frac{1}{1-2h}\varrho^{x'}_{\delta'}*\varphi_x\left(\cdot,\cdot,\tfrac{(\cdot)-h}{1-2h}\right) (1-\vartheta^\eta) w \to - \int_0^T \!\!\! \int_{\Omega_h} \frac{1}{1-2h}\varrho^{x'}_{\delta'}*\varphi_x\left(\cdot,\cdot,\tfrac{(\cdot)-h}{1-2h}\right) w\]
as $\eta \to 0^+$. Thus, we see that $\varphi^{\delta', h} \in L^2(0,T;H^1(\Omega))$ and 
\[ \varphi^{\delta', h}_x = \frac{1}{1-2h} \mathbf{\chi}_{\Omega_h} \varrho^{x'}_{\delta'}*\varphi_x\left(\cdot,\cdot,\tfrac{(\cdot)-h}{1-2h}\right).\]
It is now easy to check that $(\varphi^{\delta,\delta',h}, \gamma \varphi^{\delta,\delta',h})  \to (\varphi^{\delta',h}, \gamma \varphi^{\delta',h})$ in $\Phi_0$ as $\delta \to 0^+$, $(\varphi^{\delta',h}, \gamma \varphi^{\delta',h}) \to (\varphi^{\delta'}, \gamma \varphi^{\delta'})$ in $\Phi_0$ as $h \to 0^+$ and $(\varphi^{\delta'}, \gamma \varphi^{\delta'})  \to (\varphi, \psi)$ in $\Phi_0$ as $\delta \to 0^+$. With a diagonal procedure, we can extract sequences $\delta_k$, $h_k$, $\delta'_k$ such that for $\varphi^k = \varphi^{\delta_k,\delta'_k,h_k}$ we have $(\varphi^k, \gamma \varphi^k) \to (\varphi, \psi)$ in $\Phi_0$ as $k \to \infty$.  
\end{proof} 

\subsection{Neumann and Dirichlet problems}

For completness, we also recall how (Cauchy-)Neumann and (Cauchy-)Dirichlet problems for the heat equation fit into the abstract framework. 

The classical Neumann problem is to find, for a given $u_0$, a solution $u$ to
\begin{equation}\label{eqN}
\left\{
\begin{array}{ll}
u_t = \Delta u &  \hbox{in } \Omega_T, \\
 \frac{\partial u }{\partial \nu} =0 & \hbox{in } \partial\Omega\times (0,T), \\
u(0,x) = u_0(x) & \hbox{in }\Omega,
\end{array}
\right.
\end{equation}
The system \eqref{eqN} can be expressed as an abstract Cauchy problem with the choices 
\[H = L^2(\Omega),\quad  V = H^1(\Omega), \quad \langle A(u), \varphi\rangle = \int_\Omega \nabla u \cdot \nabla \varphi, \] 
corresponding to:
\begin{definition} \label{weak_N_def}
We say that $u \in L^2(0,T;H^1(\Omega))$ is a weak solution to the Neumann problem for the heat equation with initial datum $u_0 \in L^2(\Omega)$ if 
\begin{equation} \label{eqN-weak}
 \int_\Omega u_0 \varphi(0) + \int_0^T\!\! \int_\Omega u \varphi_t  + \int_0^T \!\! \int_\Omega \nabla u \cdot \nabla \varphi = 0 \quad \text{for every } \varphi \in \Phi_0^0, 
\end{equation} 
where 
\[\Phi_0^0 = \{ \varphi \in L^2(0,T; H^1(\Omega)) \colon \varphi_t \in L^2(0,T; L^2(\Omega)),\ \varphi(T)=0\}.\]
\end{definition}
Proposition \ref{weak_ex} yields: 
\begin{proposition} \label{weak_N_ex} 
 For any $u_0 \in L^2(\Omega)$ there exists exactly one weak solution to the Neumann problem for the heat equation with initial datum $u_0$. 
\end{proposition}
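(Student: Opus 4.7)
The plan is to apply Proposition \ref{weak_ex} directly with the choices $H = L^2(\Omega)$, $V = H^1(\Omega)$ and $\langle Au, \varphi\rangle = \int_\Omega \nabla u \cdot \nabla \varphi$. First I would record the standard structural properties: $V = H^1(\Omega)$ is a separable Hilbert space continuously and densely embedded into $H = L^2(\Omega)$, and $A \colon V \to V'$ is bounded linear, since by the Cauchy--Schwarz inequality $|\langle Au, \varphi\rangle| \leq \|\nabla u\|_{L^2}\|\nabla \varphi\|_{L^2} \leq \|u\|_V\|\varphi\|_V$.

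The key step is verifying the coercivity condition \eqref{coercivity}. This is immediate here because
\[
\langle Av, v\rangle + \|v\|_H^2 = \int_\Omega |\nabla v|^2\,dx + \int_\Omega v^2\,dx = \|v\|_{H^1(\Omega)}^2 = \|v\|_V^2,
\]
so \eqref{coercivity} holds with $c=1$ and no inequality is needed. (This is the easy analogue of the trickier coercivity one would have to prove to set up \eqref{s2-strong} or the dynamic boundary problem in the abstract framework, where the norms involve boundary integrals or degenerate weights.)

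With these ingredients in place, Proposition \ref{weak_ex} gives a unique $u \in \mathcal{V} = L^2(0,T;H^1(\Omega))$ with $u_t \in \mathcal{V}'$ satisfying \eqref{weak_eqn} with initial datum $u_0$. Rewriting this identity in the equivalent form \eqref{weak_eqn_int} against test functions in $\Phi_0^0$ (noting that the class $\Phi$ from \eqref{weak_eqn_int} coincides with $\Phi_0^0$ for these choices of $H$ and $V$) produces exactly \eqref{eqN-weak}. The continuous embedding $\mathcal{W} \hookrightarrow C([0,T], H)$ ensures that the initial condition is attained in $L^2(\Omega)$, completing both the existence and uniqueness claims.

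I do not anticipate any real obstacle; the entire argument is a verification that the Neumann setting fits the abstract template, with the only quantitative input being the trivial identity between the $H^1$-norm and $\langle A\cdot,\cdot\rangle + \|\cdot\|_H^2$.
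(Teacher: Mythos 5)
Your proposal is correct and follows exactly the route the paper takes: the paper states this proposition as an immediate consequence of Proposition \ref{weak_ex} with the choices $H=L^2(\Omega)$, $V=H^1(\Omega)$, $\langle Au,\varphi\rangle=\int_\Omega\nabla u\cdot\nabla\varphi$, and your verification of boundedness and of the coercivity identity $\langle Av,v\rangle+\|v\|_H^2=\|v\|_{H^1}^2$ is precisely the (trivial) check the paper leaves implicit.
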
 

Similarly, the classical Cauchy-Dirichlet problem is to find, for a given $u_0$, a solution $u$ to
\begin{equation}\label{eqD}
\left\{
\begin{array}{ll}
u_t = \Delta u &  \hbox{in } \Omega_T, \\
 u =0 & \hbox{in } \partial\Omega\times (0,T), \\
u(0,x) = u_0(x) & \hbox{in }\Omega,
\end{array}
\right.
\end{equation}
The system \eqref{eqD} can be expressed as an abstract Cauchy problem with the choices 
\[H = L^2(\Omega),\quad  V = H^1_0(\Omega), \quad \langle A(u), \varphi\rangle = \int_\Omega \nabla u \cdot \nabla \varphi. \] 
This corresponds to:
\begin{definition} \label{weak_D_def}
We say that $u \in L^2(0,T;H^1_0(\Omega))$ is a weak solution to the Dirichlet problem for the heat equation with initial datum $u_0 \in L^2(\Omega)$ if 
\begin{equation} \label{eqD-weak}
 \int_\Omega u_0 \varphi(0) + \int_0^T\!\! \int_\Omega u \varphi_t  + \int_0^T \!\! \int_\Omega \nabla u \cdot \nabla \varphi = 0 \quad \text{for every } \varphi \in \Phi_0^\infty, 
\end{equation} 
where 
\[\Phi_0^\infty = \{ \varphi \in L^2(0,T; H^1_0(\Omega)) \colon \varphi_t \in L^2(0,T; L^2(\Omega)),\ \varphi(T)=0\}.\]
\end{definition}
Proposition \ref{weak_ex} yields: 
\begin{proposition} \label{weak_D_ex} 
 For any $u_0 \in L^2(\Omega)$ there exists exactly one weak solution to the Dirichlet problem for the heat equation with initial datum $u_0$. 
\end{proposition}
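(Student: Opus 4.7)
The plan is to verify that the data $H = L^2(\Omega)$, $V = H^1_0(\Omega)$, and $A$ fit the abstract framework of Proposition \ref{weak_ex}, and then invoke that proposition directly. This is the same pattern used for Propositions \ref{weak_bl_ex}, \ref{weak_dbc_ex}, and \ref{weak_N_ex}, so the bulk of the work is just a routine verification of the hypotheses.

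First, I would note that $V = H^1_0(\Omega)$ is a separable Hilbert space, that $H = L^2(\Omega)$ is identified with its dual, and that the embedding $V \hookrightarrow H$ is continuous and dense: density follows because $C_c^\infty(\Omega)$ is contained in $H^1_0(\Omega)$ and dense in $L^2(\Omega)$. Second, $A\colon V \to V'$ is well-defined, linear, and bounded by the Cauchy-Schwarz inequality, since
\[
|\langle Au,\varphi\rangle| = \left|\int_\Omega \nabla u \cdot \nabla \varphi\right| \leq \|\nabla u\|_{L^2} \|\nabla \varphi\|_{L^2} \leq \|u\|_{H^1_0} \|\varphi\|_{H^1_0}.
\]

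Third, I would check the coercivity condition \eqref{coercivity}. For any $v \in H^1_0(\Omega)$,
\[
\langle Av, v\rangle + \|v\|_H^2 = \int_\Omega |\nabla v|^2 \dd x + \int_\Omega v^2 \dd x = \|v\|_{H^1(\Omega)}^2,
\]
so the inequality holds with $c = 1$ (in fact as an equality). Note that one could alternatively use the Poincar\'e inequality on $H^1_0(\Omega)$ to obtain coercivity directly from $\langle Av,v\rangle \ge c\|v\|_V^2$, but that refinement is not needed.

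Finally, with all the hypotheses of Proposition \ref{weak_ex} verified, I would apply it to obtain, for any $u_0 \in L^2(\Omega)$, a unique $u \in L^2(0,T;H^1_0(\Omega))$ with $u_t \in L^2(0,T;H^{-1}(\Omega))$ satisfying $u_t + Au = 0$ and $u(0) = u_0$. The equivalence of this formulation with \eqref{eqD-weak} (i.\,e.\ with the formulation tested against $\varphi \in \Phi_0^\infty$) is exactly the equivalence of \eqref{weak_eqn} and \eqref{weak_eqn_int} recalled from \cite{showalter}. There is no main obstacle here; the statement is an essentially immediate specialization of the abstract existence result to a standard setting.
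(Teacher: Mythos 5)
Your proposal is correct and follows exactly the route the paper intends: the paper's entire proof is the phrase ``Proposition \ref{weak_ex} yields,'' relying on the stated choices $H=L^2(\Omega)$, $V=H^1_0(\Omega)$, and $A$, and your verification of the density, boundedness, and coercivity hypotheses (with $c=1$) is precisely the routine check being left implicit there.
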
 

\subsection{Convergence to the dynamic boundary value problem} \label{subs-co}

In this subsection, we consider the case $\kappa\in(0,\infty)$. We say that $\{u_0^\eps\}_{\eps>0}$ is a well prepared sequence of approximating initial data for $(u_0, w_0)$ if 
\begin{equation}\label{init_conv} 
\sup_{\eps > 0} \int_{\Omega} b_{\eps} (u_0^{\eps})^2 =:M< \infty \quad \text{and} \quad (u_0^\eps, m_\eps u_0^\eps) \to (u_0, w_0) \quad \text{in } L^2(\Omega) \times L^2(\partial \Omega).
\end{equation} 
Note that for any pair $(u_0,w_0) \in L^2(\Omega) \times L^2(\partial \Omega)$ there exists such a sequence. Indeed, $u_0^\eps = \chi_{\Omega_\eps} u_0 + \chi_{\Omega \setminus \Omega_\eps} w_0 $ satisfies \eqref{init_conv}. 

\begin{theorem}\label{thm_conv_dbc}
Let $(u_0,w_0) \in L^2(\Omega) \times L^2(\partial \Omega)$ and let $(u,w)$ be the corresponding  weak solution to the dynamic b.v.p.\ for the heat equation. We assume that $\kappa\in(0,\infty)$ and the non-degeneracy condition \eqref{asa_nondeg} holds with $p=\frac12$. 

Suppose $\{u_0^\eps\}_{\eps>0}$ is a well prepared sequence of approximating initial data for $(u_0, w_0)$ and $\{u^\eps\}_{\eps>0}$ are the corresponding weak solutions to \eqref{s2-strong}. Then,  
\begin{equation} \label{path_conv} 
(u^\eps, m_\eps u^\eps) \rightharpoonup (u, w) \quad \text{in } L^2(0,T;L^2(\Omega)\times L^2(\partial \Omega)).
\end{equation} 
\end{theorem}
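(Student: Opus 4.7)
\medskip
\noindent\textbf{Proof plan.}

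\emph{Step 1: Uniform a priori estimates.} The plan is to first exploit the abstract energy equality \eqref{energy_eq} applied to the Cauchy problem for $u^\eps$ in $H_\eps = L^2_\eps(\Omega)$. Combined with the well-preparedness hypothesis \eqref{init_conv}, this gives
\[
\tfrac12 \int_\Omega b_\eps (u^\eps(t))^2 \dd x + \int_0^t \int_\Omega a_\eps |\nabla u^\eps|^2 \dd x \dd s \;=\; \tfrac12 \int_\Omega b_\eps (u_0^\eps)^2 \dd x \leq \tfrac{M}{2}.
\]
Because $b_\eps \geq 1$ this bounds $u^\eps$ uniformly in $L^\infty(0,T;L^2(\Omega))$; because $b_\eps = \phi(\eps)$ on $\Omega \setminus \Omega_\eps$ one gets $\phi(\eps) \|u^\eps(t)\|_{L^2(\Omega\setminus\Omega_\eps)}^2 \leq M$; and $\int_0^T E_\eps(u^\eps) \leq M/2$. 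Applying Jensen's inequality to the defining formula \eqref{def-we} yields $\|m_\eps u^\eps(t)\|_{L^2(\partial\Omega)}^2 \leq \eps^{-1} \|u^\eps(t)\|_{L^2(\Omega\setminus\Omega_\eps)}^2 \leq M/(\eps\phi(\eps))$, which, thanks to $\phi(\eps)\eps \to \kappa > 0$, bounds $m_\eps u^\eps$ uniformly in $L^\infty(0,T;L^2(\partial\Omega))$.

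\emph{Step 2: Extraction of limits and identification of boundary trace.} From Step 1 I extract (not relabelled) subsequences so that $u^\eps \rightharpoonup u$ weakly in $L^2(0,T;H^1(\Omega_\delta))$ for every $\delta>0$, and $m_\eps u^\eps \rightharpoonup \tilde w$ weakly in $L^2(\partial\Omega \times (0,T))$. Lemma \ref{traceU-w} then gives $u \in L^2(0,T;H^1(\Omega))$ and $\gamma^\eps u^\eps \rightharpoonup \gamma u$ weakly in $L^2(\partial\Omega \times (0,T))$. Since $p = 1/2$ in \eqref{asa_nondeg}, Lemma \ref{lem11-int}(b) yields $\gamma^\eps u^\eps - m_\eps u^\eps \to 0$ in $L^1$, hence a fortiori in the sense of distributions, forcing $\tilde w = \gamma u$.

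\emph{Step 3: Limit passage in the weak formulation.} By Lemma \ref{weak_approx} and Proposition \ref{prop_equiv} it suffices to test \eqref{s2-weak} against $\varphi \in C^1([0,T]\times\overline\Omega)$ with $\varphi(T)=0$ and pass to the limit. The gradient term is split as $\int_0^T \int_{\Omega_\delta} + \int_0^T \int_{\Omega \setminus \Omega_\delta}$; on $\Omega_\delta$ one has $a_\eps = 1$ for $\eps<\delta$ and the weak $H^1$-convergence gives $\int_0^T \int_{\Omega_\delta} \nabla u \cdot \nabla \varphi$, while the boundary-layer remainder is bounded by $\sqrt{2 \int_0^T E_\eps(u^\eps)}\,\|\nabla \varphi\|_{L^2((\Omega\setminus\Omega_\delta)\times(0,T))}$, which tends to zero as $\delta \to 0^+$. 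The crucial terms are the ones weighted by $b_\eps$. Splitting $\int_0^T\!\int_\Omega b_\eps u^\eps \varphi_t = \int_0^T\!\int_{\Omega_\eps} u^\eps \varphi_t + \phi(\eps)\eps \cdot \tfrac{1}{\eps}\int_0^T\!\int_{\Omega\setminus\Omega_\eps} u^\eps \varphi_t $, the first piece converges to $\int_0^T\!\int_\Omega u \varphi_t$ by the weak $L^2$ convergence of $u^\eps$; in the second, I Taylor-expand $\varphi_t(t,x',x_N) = \varphi_t(t,x',0) + O(x_N)$ so that it reduces to $\phi(\eps)\eps \int_0^T\!\int_{\partial\Omega} (m_\eps u^\eps)\varphi_t(t,x',0)$ plus an error that, by Cauchy--Schwarz and $\phi(\eps)\|u^\eps\|_{L^2(\Omega\setminus\Omega_\eps)}^2 \leq M$, is $O(\sqrt{\phi(\eps)\eps^3}) = o(1)$. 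Using $\phi(\eps)\eps \to \kappa$ together with the weak convergence $m_\eps u^\eps \rightharpoonup \gamma u$ from Step 2 gives $\kappa \int_0^T\!\int_{\partial\Omega} \gamma u\, \varphi_t$. The initial term $\int_\Omega b_\eps u_0^\eps \varphi(0)$ is handled analogously, but now using the strong $L^2(\partial\Omega)$-convergence $m_\eps u_0^\eps \to w_0$ from \eqref{init_conv}, producing $\int_\Omega u_0 \varphi(0) + \kappa \int_{\partial\Omega} w_0 \varphi(0)$.

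\emph{Step 4: Uniqueness and full convergence.} The limit identity is exactly \eqref{relaxed_dbc}, and by Proposition \ref{prop_equiv} the pair $(u,\gamma u)$ is the weak solution to the dynamic b.v.p.\ with datum $(u_0,w_0)$. Uniqueness from Proposition \ref{weak_dbc_ex} eliminates the need for subsequences, giving the advertised convergence \eqref{path_conv} with $w = \gamma u$. I expect the main technical obstacle to lie in Step 3, specifically the treatment of the boundary-layer contribution to the $b_\eps$-weighted terms: one must carefully resolve the interplay between the blow-up $\phi(\eps)\to\infty$, the shrinking layer thickness $\eps$, and the Taylor remainder for $\varphi$, so that the calibration $\phi(\eps)\eps\to\kappa$ yields the precise factor $\kappa$ in front of the boundary integrals while the error terms are killed by the energy bound on the layer.
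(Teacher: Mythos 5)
Your proposal is correct and follows essentially the same route as the paper: the energy equality \eqref{energy_eq_bl} for the a priori bounds, extraction of weak limits of $(u^\eps, m_\eps u^\eps)$, identification of the boundary limit with $\gamma u$ via Lemmata \ref{traceU-w} and \ref{lem11-int}, the same splitting of the $b_\eps$-weighted and gradient terms in the limit passage, and Proposition \ref{prop_equiv} plus uniqueness to upgrade subsequential to full convergence. The only cosmetic imprecision is the claim that $a_\eps=1$ on $\Omega_\delta$ for $\eps<\delta$, which holds for the model weight \eqref{dea} but under the general hypotheses \eqref{asa}--\eqref{asa_nondeg} should be replaced by the locally uniform convergence $a_\eps\to 1$ together with the lower bound $a_\eps\geq\underline{a}$ on $\Omega_\eps$, exactly as the paper does.
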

\begin{remark} 
If $u_0 \in H^1(\Omega)$, then $\gamma u_0 \in L^2(\partial \Omega)$ is well defined and $u_0^\eps \equiv u_0$ satisfies \eqref{init_conv} with $w_0 = \gamma u_0$. In this way we recover a conceptually simpler result, formally similar to \cite[Theorem 3.1]{colli}, where only the homogeneous initial condition $u_0 = 0$ was considered. 
\end{remark} 
\begin{proof} 
Let us take any $u_0^\eps$ satisfying \eqref{init_conv} and let $u^\eps$ be the weak solution to \eqref{s2-strong} with initial datum $u_0^\eps$. The energy equality \eqref{energy_eq} for \eqref{s2-strong} takes the form 
\begin{equation}\label{energy_eq_bl} 
\frac{1}{2} \int_\Omega b_\eps (u^\eps(t))^2 + \int_0^t \!\! \int_\Omega a_\eps |\nabla u^\eps|^2 = \frac{1}{2} \int_{\Omega} b_{\eps} (u_0^{\eps})^2. 
\end{equation} 
Since $\kappa\in (0,\infty)$, 
\begin{equation} \label{gamma_b_est}
\frac\kappa2\int_{\partial \Omega} (m_\eps u^\eps(t))^2 + \int_\Omega (u^\eps(t))^2 \le \int_\Omega b_\eps (u^\eps(t))^2 
\end{equation}
for $t \in [0,T]$ and sufficiently small $\eps>0$. 
Thus, by \eqref{init_conv}, \eqref{energy_eq_bl} and \eqref{gamma_b_est}, $u^\eps$ is bounded in $L^2(0,T;L^2(\Omega))$ and $m_\eps u^\eps$ is bounded in $L^2(0,T;L^2(\partial \Omega))$. Hence, we can extract weakly convergent sequences $u^{\eps_k} \rightharpoonup \overline{u}$ in $L^2(0,T;L^2(\Omega))$ and $m_{\eps_k} u^{\eps_k} \rightharpoonup \overline{w}$ in $L^2(0,T;L^2(\partial \Omega))$. 

We need to prove that the pair $(\overline{u},\overline{w})$ is the weak solution to the dynamic b.v.p. Let us take any $\varphi \in C^1([0,T]\times \overline{\Omega})$ such that $\varphi(T)=0$. Surely, $\varphi$ is a legitimate test function for the weak formulation of the boundary layer problem \eqref{s2-weak}. By \eqref{init_conv},  
\begin{equation} \label{weak_pass_init} 
\int_\Omega b_\eps u_0^\eps \varphi(0, \cdot) 
= \eps\phi(\eps) m_\eps(u^\eps_0 \vfi(0,\cdot)) + \int_{\Omega_\eps} u_0^\eps \varphi(0, \cdot) 
\to \kappa\int_{\partial \Omega} w_0  \varphi(0,\cdot) + \int_\Omega u_0 \varphi(0,\cdot).
\end{equation} 
In this limit we also use the uniform continuity of $\vfi$ over $\overline\Omega.$

Let us fix $\delta>0$, we will consider $\eps\in(0,\delta).$ Then, we have 
\[\int^T_0 \!\!\! \int_\Omega b_{\eps} u^{\eps} \varphi_t  = \int^T_0 \!\!\! \int_{\Omega_\eps} u^{\eps} \varphi_t + \eps \phi(\eps)\int^T_0 \!\!\! \int_{\partial \Omega} m_{\eps} u^{\eps} \varphi_t(\cdot,\cdot,0) + \phi(\eps) \int^T_0 \!\!\! \int_{\Omega\setminus \Omega_{\eps}}  u^{\eps} \left(\varphi_t - \varphi_t(\cdot,\cdot,0)\right),\]
where, since $\eps \phi(\eps) \to \kappa \in (0, \infty)$, 
\begin{multline*} 
\left|\phi(\eps) \int^T_0 \!\!\! \int_{\Omega\setminus \Omega_{\eps}}  u^{\eps} \left(\varphi_t - \varphi_t(\cdot,\cdot,0)\right)\right| \\ \leq \left(\phi(\eps) \int^T_0 \!\!\! \int_{\Omega\setminus \Omega_{\eps}} (u^{\eps})^2 \right)^\frac{1}{2} \left(\phi(\eps) \int^T_0 \!\!\! \int_{\Omega\setminus \Omega_{\eps}} \left(\varphi_t - \varphi_t(\cdot,\cdot,0)\right)^2 \right)^\frac{1}{2} \to 0
\end{multline*} 
by \eqref{energy_eq_bl} and smoothness of $\varphi$. Therefore, we can pass to the limit 
\begin{equation} \label{weak_pass_t} 
\int^T_0 \!\!\! \int_\Omega b_{\eps_k} u^{\eps_k} \varphi_t  \to \int^T_0 \!\!\! \int_\Omega \overline{u} \varphi_t + \kappa\int^T_0 \!\!\! \int_{\partial \Omega} \overline{w} \varphi_t.
\end{equation} 
Furthermore, again by \eqref{energy_eq_bl}, \eqref{init_conv}, we can assume without the loss of generality that $\nabla u_{\eps_k} \rightharpoonup \nabla \overline{u}$ in $L^2(0,T;L^2_{loc}(\Omega))$ and 
\[\left|\int^T_0 \!\!\! \int_\Omega a_{\eps} \nabla u^{\eps} \nabla \varphi - \int^T_0 \!\!\! \int_{\Omega_\delta} a_{\eps} \nabla u^{\eps} \nabla \varphi\right|^2 = \left|\int^T_0 \!\!\! \int_{\Omega\setminus \Omega_\delta} a_{\eps} \nabla u^{\eps} \nabla \varphi\right|^2 \leq 2M 
\int^T_0 \!\!\! \int_{\Omega \setminus \Omega_\delta} |\nabla \varphi|^2.\]  
For a fixed $\delta>0$, the uniform convergence of $a_{\eps}$ to 1 on $\Omega_\delta$ and the weak convergence of $\nabla u^{\eps_k}$ in this region yield
\[\lim
_{k \to \infty} 
\int^T_0 \!\!\! \int_{\Omega_\delta} a_{\eps_k} \nabla u^{\eps_k} \nabla \varphi = 
\int^T_0 \!\!\! \int_{\Omega_\delta} \nabla \overline{u} \nabla \varphi.
\]
At the same time
$$
\left|\int^T_0 \!\!\! \int_{\Omega\setminus\Omega_\delta} \nabla \overline{u} \nabla \varphi \right| \le \| \nabla \overline{u}\|_{L^2((\Omega\setminus\Omega_\delta)\times (0,T))} \| \nabla \varphi\|_{L^2((\Omega\setminus\Omega_\delta)\times (0,T))}.
$$
Thus, passing to the limit $\delta \to 0^+$, we obtain
\begin{equation} \label{weak_pass_nabla}
\lim_{k \to \infty} \int^T_0 \!\!\! \int_\Omega a_{\eps_k} \nabla u^{\eps_k} \nabla \varphi = \int^T_0 \!\!\! \int_\Omega \nabla \overline{u} \nabla \varphi.
\end{equation} 
Summing up \eqref{weak_pass_init}, \eqref{weak_pass_t} and \eqref{weak_pass_nabla} we obtain 
\begin{equation} \label{almost_weak_dbc} 
\int_\Omega u_0 \varphi(0,\cdot) + \kappa \int_{\partial \Omega} w_0  \varphi(0,\cdot) + \int^T_0 \!\!\! \int_\Omega \overline{u} \varphi_t + \kappa \int^T_0 \!\!\! \int_{\partial \Omega} \overline{w} \varphi_t + \int^T_0 \!\!\! \int_\Omega \nabla \overline{u} \nabla \varphi = 0.
\end{equation} 
It remains to show that $\overline{w} = \gamma \overline{u}$. Indeed, this follows after combining (\ref{energy_eq_bl}) with Lemma \ref{traceU-w} and Lemma \ref{lem11-int}. Thus, by Proposition \ref{prop_equiv}, $(\overline{u},\overline{w})$ coincides with the weak solution $(u,w)$. Since we can extract a convergent subsequence from any subsequence of $(u^\eps, m_\eps u^\eps)$, the whole sequence converges to $(u, w)$ due to uniqueness of the weak solution to the limiting problem. 
\end{proof} 

\subsection{Convergence in the cases $\kappa =0$ and $\kappa = +\infty$} 
\label{k0infty}
We say that $\{u_0^\eps\}_{\eps>0} \subset L^2(\Omega)$ is a well prepared sequence of approximating initial data for $u_0$ if 
\begin{equation} \label{init_conv_ND} 
\sup_\eps \int_\Omega b_\eps (u_0^\eps)^2 <\infty \quad \text{and} \quad u_0^\eps \rightharpoonup u_0 \qquad\hbox{in }L^2(\Omega).
\end{equation} 
Note that for any $u_0 \in L^2(\Omega)$ there exists such a sequence. Indeed, $u_0^\eps = \chi_{\Omega_\eps} u_0 $ satisfies \eqref{init_conv}, no matter the choice of $\kappa$. 

The case $\kappa =0$ is relatively easy. Namely, we can see that the boundary terms vanish in the limit. This corresponds to the case of homogeneous Neumann data for the limit function $u$.

\begin{proposition}\label{w-ka-zer}
Suppose that $\kappa = 0$. Let $u_0 \in L^2(\Omega)$ and let $u$ be the corresponding weak solution to the Neumann problem. Suppose that a family $u_0^\eps$ satisfies \eqref{init_conv_ND} and $u^\eps$ is the corresponding family of solutions to (\ref{s2-strong}). Then 
$$
u^\eps \rightharpoonup u\quad\text{in} \quad L^2(0,T; L^2(\Omega)) \cap L^2(0,T; H^1_{loc}(\Omega)). 
$$
\end{proposition}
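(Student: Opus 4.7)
The strategy parallels that of Theorem \ref{thm_conv_dbc}, but exploits the fact that $\eps\phi(\eps)\to 0$ so that every boundary‑layer contribution vanishes in the limit, leaving precisely the weak Neumann formulation \eqref{eqN-weak}.

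First I would use the energy equality \eqref{energy_eq_bl} together with \eqref{init_conv_ND} to obtain
$$
\sup_{\eps>0}\Bigl(\sup_{t\in[0,T]}\int_\Omega b_\eps(u^\eps(t))^2+\int_0^T\!\!\int_\Omega a_\eps|\nabla u^\eps|^2\Bigr)\le M<\infty.
$$
Since $b_\eps\ge 1$, this bounds $u^\eps$ in $L^\infty(0,T;L^2(\Omega))$; since $a_\eps\ge\underline a=1$ on $\Omega_\eps$ and $a_\eps\to 1$ locally uniformly, $\nabla u^\eps$ is bounded in $L^2(0,T;L^2(\Omega_\delta))$ for every fixed $\delta>0$. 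Jensen's inequality applied to $m_\eps$ yields $\|\sqrt{\eps\phi(\eps)}\,m_\eps u^\eps(t)\|_{L^2(\partial\Omega)}^2\le \int_{\Omega\setminus\Omega_\eps} b_\eps (u^\eps(t))^2\le M$, so $\sqrt{\eps\phi(\eps)}\,m_\eps u^\eps$ is bounded in $L^\infty(0,T;L^2(\partial\Omega))$. A diagonal extraction then produces a subsequence (not relabelled) with $u^\eps\rightharpoonup\overline u$ in $L^2(0,T;L^2(\Omega))$ and weakly in $L^2(0,T;H^1(\Omega_\delta))$ for every $\delta>0$; weak lower semicontinuity of $\int_0^T\!\!\int_{\Omega_\delta}|\nabla\,\cdot\,|^2$ followed by monotone convergence as $\delta\to 0^+$ places $\overline u$ in $L^2(0,T;H^1(\Omega))$.

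Next, I would test the weak formulation \eqref{s2-weak} of $u^\eps$ against an arbitrary $\varphi\in C^1([0,T]\times\overline\Omega)$ with $\varphi(T)=0$, which is a legitimate element of $\Phi_\eps$, and pass to the limit term by term. Splitting
$$
\int_\Omega b_\eps u_0^\eps\varphi(0)=\int_{\Omega_\eps}u_0^\eps\varphi(0)+\eps\phi(\eps)\int_{\partial\Omega}m_\eps u_0^\eps\,\gamma\varphi(0),
$$
the first piece converges to $\int_\Omega u_0\varphi(0)$ by the weak convergence in \eqref{init_conv_ND}, while the second is majorised by $\sqrt{\eps\phi(\eps)}\cdot\|\sqrt{\eps\phi(\eps)}\,m_\eps u_0^\eps\|_{L^2(\partial\Omega)}\|\gamma\varphi(0)\|_{L^2(\partial\Omega)}$ and hence tends to $0$ because $\eps\phi(\eps)\to\kappa=0$. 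The time term is handled by the identical splitting: the bulk piece $\int_0^T\!\!\int_{\Omega_\eps}u^\eps\varphi_t$ converges to $\int_0^T\!\!\int_\Omega\overline u\,\varphi_t$, whereas the layer contribution $\phi(\eps)\int_0^T\!\!\int_{\Omega\setminus\Omega_\eps}u^\eps\varphi_t$ is absorbed by Cauchy–Schwarz together with the inequality $\phi(\eps)\int_0^T\!\!\int_{\Omega\setminus\Omega_\eps}(u^\eps)^2\le MT$ and the elementary bound $\phi(\eps)\int_0^T\!\!\int_{\Omega\setminus\Omega_\eps}\varphi_t^2\le T\|\varphi_t\|_\infty^2\,\eps\phi(\eps)\to 0$. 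The elliptic term $\int_0^T\!\!\int_\Omega a_\eps\nabla u^\eps\cdot\nabla\varphi$ is treated verbatim as in the proof of Theorem \ref{thm_conv_dbc}: on $\Omega_\delta$ local uniform convergence of $a_\eps$ and weak convergence of $\nabla u^\eps$ give $\int_0^T\!\!\int_{\Omega_\delta}\nabla\overline u\cdot\nabla\varphi$, while the remainder on $\Omega\setminus\Omega_\delta$ is controlled by $\sqrt M\,\|\nabla\varphi\|_{L^2((\Omega\setminus\Omega_\delta)\times(0,T))}$, which vanishes as $\delta\to 0^+$.

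Putting the three limits together, $\overline u$ satisfies \eqref{eqN-weak} for every $\varphi\in C^1([0,T]\times\overline\Omega)$ with $\varphi(T)=0$. A standard mollification-plus-cut-off argument shows that this class is dense in $\Phi_0^0$ (no trace compatibility is required here, which is what makes the Neumann case so much easier than the dynamic one of Lemma \ref{weak_approx}), so $\overline u$ is the weak Neumann solution with initial datum $u_0$. Uniqueness (Proposition \ref{weak_N_ex}) forces $\overline u=u$, and since the same reasoning can be repeated starting from any subsequence of $\{u^\eps\}$, the whole family converges weakly in $L^2(0,T;L^2(\Omega))\cap L^2(0,T;H^1_{loc}(\Omega))$ to $u$. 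The only delicate point is the correct handling of the $\phi(\eps)$‑weight in the time integral: the $\phi(\eps)$ factor blows up on the layer, so one must peel off the boundary contribution as an $m_\eps$‑average and pair the surplus $\sqrt{\phi(\eps)}$ against a $\sqrt\eps$ coming from the layer's thickness, using the energy‑based bound on $\sqrt{\eps\phi(\eps)}\,m_\eps u^\eps$ just as above.
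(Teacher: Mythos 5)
Your proposal is correct and follows essentially the same route as the paper: the same energy bound from \eqref{energy_eq_bl}, the same Cauchy--Schwarz splitting of the boundary-layer contributions with the factor $\eps\phi(\eps)\to 0$, the same treatment of the elliptic term as in Theorem \ref{thm_conv_dbc}, and the same conclusion via uniqueness of the Neumann solution. The only cosmetic point is that you invoke $a_\eps\ge\underline{a}$ on $\Omega_\eps$ from \eqref{asa_nondeg}, whereas the paper stresses that this proposition does not need \eqref{asa_nondeg}; the locally uniform convergence $a_\eps\to 1$ from \eqref{asa} already gives the local gradient bound you use.
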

\begin{proof}
Using \eqref{energy_eq_bl}, we extract a subsequence $u^{\eps_k}$ such that  
\[u^{\eps_k} \rightharpoonup \overline{u}\quad\text{in} \quad L^2(0,T; L^2(\Omega)) \cap L^2(0,T; H^1_{loc}(\Omega)). \] 
We need to prove that $\overline{u}$ is the solution to the Neumann problem. As in the proof of Theorem \ref{thm_conv_dbc}, we show that 
$$
\lim_{k \to \infty} \int^T_0 \!\!\! \int_\Omega a_{\eps_k} \nabla u^{\eps_k} \nabla \varphi = \int^T_0 \!\!\! \int_\Omega \nabla \overline{u} \nabla \varphi,
$$
see (\ref{weak_pass_nabla}).
It remains to prove that 
$$
\lim_{k \to \infty} \int_0^T\int_\Omega b_{\eps_k} u^{\eps_k} \vfi_t= \int_0^T\int_\Omega  \overline{u} \vfi_t. 
$$
In order to achieve this goal it is enough to notice that
$$
\lim_{\eps \to 0^+} \int_0^T\int_{\Omega\setminus\Omega_{\eps}} b_{\eps} u^{\eps} \vfi_t= 0.
$$
Indeed,
$$
\int_0^T\int_{\Omega\setminus\Omega_{\eps}} |b_{\eps} u^{\eps} \vfi_t| \le
\int_0^T\int_{\partial\Omega}\left( \int_0^{\eps}\phi(\eps)  (u^{\eps})^2\right)^{1/2}
\left(\int_0^{\eps} \phi({\eps}) (\vfi_t)^2\right)^{1/2}.
$$
Now, by (\ref{energy_eq_bl}), the first factor on the RHS is finite while
$$
\phi({\eps})\int_0^{\eps} (\vfi_t)^2 = {\eps} \phi({\eps}) m_{\eps}((\vfi_t)^2)
$$
converges to zero. This is so because $\kappa=0$ and the test function $\vfi$ has bounded derivatives. Finally, from the uniqueness of solutions to the Neumann problem, we deduce that the whole sequence $u^\eps$ converges to $u$ by the usual argument. 
\end{proof}
Let us stress  that in this proposition we do not use  assumption (\ref{asa_nondeg}).

\bigskip
Finally, we state the companion result for $\kappa=\infty$.
\begin{proposition}\label{w-ka-inft}
Suppose that $\kappa = +\infty$ and that $a_\eps$ satisfies \eqref{asa_nondeg} with $p=\frac{1}{2}$. Let $u_0 \in L^2(\Omega)$ and let $u$ be the corresponding weak solution to the Dirichlet problem. Suppose that a family $u_0^\eps$ satisfies \eqref{init_conv_ND} and $u^\eps$ is the corresponding family of solutions to (\ref{s2-strong}). Then 
$$
u^\eps \rightharpoonup u \text{ in } L^2(0,T; L^2(\Omega)) \cap L^2(0,T; H^1_{loc}(\Omega)) \quad \text{and} \quad m_\eps u^\eps \to 0 = \gamma u \text{ in } L^2(0,T; L^2(\partial \Omega)) 
$$
\end{proposition}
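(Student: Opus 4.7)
The plan is to mimic the arguments of Theorem~\ref{thm_conv_dbc} and Proposition~\ref{w-ka-zer}, with two adaptations dictated by the Dirichlet character of the limit: the limit must have vanishing trace on $\partial\Omega$, and the weak formulation must be driven by test functions that do not feel the boundary layer. I split the argument into four steps.

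\emph{Step 1: Energy estimates and extraction.} From \eqref{init_conv_ND} and the energy equality \eqref{energy_eq_bl}, I would get $\sup_\eps\sup_t \int_\Omega b_\eps (u^\eps(t))^2 \le M$ and $\sup_\eps \int_0^T E_\eps(u^\eps(s))\,\dd s < \infty$. Invoking Lemma~\ref{traceU-w}(0), I extract a subsequence $u^{\eps_k}$ with a weak limit $\overline{u} \in L^2(0,T;H^1(\Omega))$ such that $u^{\eps_k} \rightharpoonup \overline{u}$ weakly in $L^2(0,T;H^1(\Omega_\delta))$ for every $\delta>0$.

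\emph{Step 2: Vanishing of averages and identification of the trace.} Using $\eps\phi(\eps) \to \infty$ together with Jensen's inequality and the pointwise-in-time bound $\phi(\eps)\int_{\Omega\setminus\Omega_\eps}(u^\eps)^2 \le \int_\Omega b_\eps (u^\eps)^2 \le M$, I would obtain
\[
\int_0^T \!\!\! \int_{\partial\Omega} (m_\eps u^\eps)^2 \le \int_0^T \!\!\! \int_{\partial\Omega} m_\eps((u^\eps)^2) = \frac{1}{\eps}\int_0^T \!\!\! \int_{\Omega\setminus\Omega_\eps}(u^\eps)^2 \le \frac{MT}{\eps\phi(\eps)} \longrightarrow 0,
\]
i.e. $m_\eps u^\eps \to 0$ strongly in $L^2(\partial\Omega\times(0,T))$. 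Next, Lemma~\ref{traceU-w}(1) identifies the weak $L^2$ limit of $\gamma^{\eps_k} u^{\eps_k}$ with $\gamma\overline{u}$, while Lemma~\ref{lem11-int}(b), which is where the hypothesis \eqref{asa_nondeg} with $p=\tfrac{1}{2}$ is used, gives $\gamma^{\eps_k}u^{\eps_k} - m_{\eps_k}u^{\eps_k} \to 0$ in $L^1(\partial\Omega\times(0,T))$. Combining these I get $\gamma^{\eps_k}u^{\eps_k} \to 0$ in $L^1$, hence $\gamma\overline{u}=0$ and $\overline{u} \in L^2(0,T;H^1_0(\Omega))$.

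\emph{Step 3: Limit passage in the equation.} I would test \eqref{s2-weak} with functions $\vfi \in C^1([0,T]\times\overline{\Omega})$ satisfying $\vfi(T)=0$ and $\supp \vfi \subset [0,T]\times \Omega_{\delta_0}$ for some $\delta_0>0$; such functions are dense in $\Phi_0^\infty$ by a standard cutoff/mollification procedure. For $\eps<\delta_0$, $\vfi$ vanishes on $\Omega\setminus\Omega_\eps$, so the boundary-layer contributions to all three terms in \eqref{s2-weak} disappear identically. The weak convergence of $u^{\eps_k}$, the strong convergence $u_0^\eps \rightharpoonup u_0$ tested against $\vfi(0)\in C_c(\Omega)$, and the locally uniform convergence $a_\eps \to 1$ then yield the weak Dirichlet formulation \eqref{eqD-weak} for $\overline{u}$ with initial datum $u_0$. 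By Proposition~\ref{weak_D_ex} the weak Dirichlet solution is unique, hence $\overline{u}=u$, and because every subsequence of $\{u^\eps\}$ admits a further subsequence converging to the same $u$, the whole sequence converges.

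\emph{Main obstacle.} In contrast to Theorem~\ref{thm_conv_dbc}, when $\kappa=\infty$ the weight $\phi(\eps)$ in the boundary layer cannot be compensated by the linear vanishing at $\partial\Omega$ of an arbitrary $C^1$ test function, and naive test-function choices lead to uncontrollable contributions of the form $\eps^{3/2}\sqrt{\phi(\eps)}$. The key observation that unlocks the proof is that, for the Dirichlet formulation, test functions compactly supported in $\Omega$ suffice; this removes the boundary-layer term entirely. The remaining task—forcing the limit into $H^1_0$—is then handled by \eqref{asa_nondeg} with $p=\tfrac{1}{2}$ through Lemma~\ref{lem11-int}(b).
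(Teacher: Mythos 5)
Your proposal is correct and follows essentially the same route as the paper: the same energy bound and extraction via Lemma \ref{traceU-w}, the same computation showing $\|m_\eps u^\eps\|_{L^2}^2 \lesssim (\eps\phi(\eps))^{-1} \to 0$, the same identification $\gamma\overline{u}=0$ through Lemmata \ref{traceU-w} and \ref{lem11-int}(b), and the same limit passage with test functions compactly supported in $\Omega$ followed by density in $\Phi_0^\infty$ and uniqueness. Your explicit handling of the mismatch of topologies in Step 2 (weak $L^2$ limit versus strong $L^1$ limit) is in fact slightly more careful than the paper's one-line chain of equalities.
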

\begin{proof}
Let $u_0^\eps$ be any sequence satisfying \eqref{init_conv_ND}. Then, by \eqref{energy_eq_bl} and the uniform bound in \eqref{init_conv_ND}, there exists a subsequence $u^{\eps_k}$ and $\overline{u} \in L^\infty(0,T;L^2(\Omega))\cap L^2(0,T;H^1(\Omega))$ such that 
$$
u^\eps \rightharpoonup \overline{u} \text{ in } L^2(0,T; L^2(\Omega)) \cap L^2(0,T; H^1_{loc}(\Omega)). 
$$
Moreover, we have 
\begin{equation}\label{5.tr}
\| m_\eps(u^\eps) \|^2 = 
\int_0^T\int_{\partial\Omega} \left(\frac1\varepsilon\int_0^\varepsilon u^\eps\, dx_N \right)^2\,d x' 
\le 
\frac{\phi(\eps)}{\eps \phi(\eps)}\int_0^T \int_{\Omega\setminus \Omega_\eps} u^2_\eps\,dxdt \le \frac1{\eps \phi} \int_\Omega b_\eps u^2_{0,\eps}\,dx \stackrel{\eps \to 0}{\longrightarrow} 0.  
\end{equation}
Thus, by Lemmata \ref{traceU-w} and \ref{lem11-int}, 
\[\gamma \overline{u} = \lim_{ k \to \infty} \gamma^{\eps_k} u^{\eps_k} = \lim_{k \to \infty} m_{\eps_k} u^{\eps_k} = 0,\]
where the limits are understood in $L^2(0,T;L^2(\partial \Omega))$. Now take any $\varphi \in C^1_c([0,T[\times\Omega)$. Then, we easily check that \[\int_\Omega b_\eps u_0^\eps \varphi(0) \to \int_\Omega u_0 \varphi(0), \quad \int_0^T\!\!\int_\Omega b_\eps u^\eps \varphi_t \to \int_0^T\!\!\int_\Omega \overline{u} \varphi_t, \quad \int_0^T\!\!\int_\Omega a_\eps \nabla u^\eps \cdot \nabla \varphi \to \int_0^T\!\!\int_\Omega \nabla \overline{u} \cdot \nabla \varphi.\]   
Thus, $\overline{u}$ satisfies 
\[\int_\Omega u_0 \varphi(0) + \int_0^T\!\!\int_\Omega \overline{u} \varphi_t + \int_0^T\!\!\int_\Omega \nabla \overline{u} \cdot \nabla \varphi = 0\]
for any $\varphi \in C^1_c([0,T[\times\Omega)$. Since this set is dense in 
$$
\{\varphi \in L^2(0,T;H_0^1(\Omega))\cap H^1(0,T; L^2(\Omega)) \colon \varphi(T) = 0\},
$$ 
we conclude that $\overline u$ coincides with the weak solution to the Dirichlet problem $u$. Then, by the usual argument involving uniqueness of $u$, we deduce the weak convergence of the whole sequence $u^\eps$. 
\end{proof}

\subsection{Necessity of the non-degeneracy condition \eqref{asa_nondeg}}

In Theorem \ref{thm_conv_dbc} and Proposition \ref{w-ka-inft}, as opposed to Proposition \ref{w-ka-zer}, we require the non-degeneracy condition \eqref{asa_nondeg}. Here, we show that this assumption cannot be dropped. To see this, we can take any $a_\eps$ satisfying \eqref{asa} and 
\begin{equation} \label{asa_deg}
\max_{[\eps, 2 \eps]} a_\eps \leq C \eps^2
\end{equation}
with $C>0$, which contradicts the first part of assumption \eqref{asa_nondeg}.
\begin{proposition}\label{prop_deg}
Suppose that $\kappa \in [0, \infty]$ and $a_\eps$ satisfies \eqref{asa_deg}. Let $u_0 \in L^2(\Omega)$ and let $u$ be the corresponding weak solution to the Neumann problem. Suppose that a family $u_0^\eps$ satisfies \eqref{init_conv_ND} and $u^\eps$ is the corresponding family of solutions to (\ref{s2-strong}). Then 
$$
u^\eps \rightharpoonup u\quad\text{in} \quad L^2(0,T; L^2(\Omega)) \cap L^2(0,T; H^1_{loc}(\Omega)). 
$$
\end{proposition}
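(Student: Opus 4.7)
The plan is to combine the weak compactness method used in Propositions \ref{w-ka-zer} and \ref{w-ka-inft} with a cutoff argument that exploits the strong degeneracy \eqref{asa_deg} to kill boundary-layer contributions regardless of the value of $\kappa$. The energy equality \eqref{energy_eq_bl} together with \eqref{init_conv_ND} yields a uniform bound $\int_\Omega b_\eps (u^\eps(t))^2 + 2\int_0^t E_\eps(u^\eps) \leq M$. Since $a_\eps \to 1$ locally uniformly by \eqref{asa}, for each $\delta>0$ one has $a_\eps \geq 1/2$ on $\Omega_\delta$ once $\eps$ is small enough, which gives uniform bounds on $u^\eps$ in $L^\infty(0,T; L^2(\Omega))$ and in $L^2(0,T; H^1(\Omega_\delta))$. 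A diagonal extraction yields a subsequence $u^{\eps_k} \rightharpoonup \overline{u}$ weakly in $L^2(0,T; L^2(\Omega)) \cap L^2(0,T; H^1_{loc}(\Omega))$; the lower semicontinuity bound $\int_0^T\int_{\Omega_\delta} |\nabla \overline{u}|^2 \leq 2M$ combined with monotone convergence as $\delta \to 0^+$ upgrades the limit to $\overline{u} \in L^2(0,T; H^1(\Omega))$.

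To identify $\overline{u}$ as the weak Neumann solution, by density it is enough to verify \eqref{eqN-weak} for $\vfi \in C^1([0,T]\times\overline{\Omega})$ with $\vfi(T) = 0$. Naive testing of \eqref{s2-weak} with $\vfi$ itself would fail: the boundary-layer contributions such as $\phi(\eps)\int_{\Omega \setminus \Omega_\eps} u_0^\eps \vfi(0)$ can remain $O(1)$ and even diverge when $\kappa = \infty$. Instead I introduce a Lipschitz cutoff $\chi^\eps$ depending only on $d(x)$, with $\chi^\eps = 1$ on $\Omega_{2\eps}$, $\chi^\eps = 0$ on $\Omega \setminus \Omega_\eps$ and $|\nabla \chi^\eps| \leq 1/\eps$, and test \eqref{s2-weak} with $\chi^\eps \vfi \in \Phi_\eps$. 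Since $\chi^\eps \vfi$ vanishes in the boundary layer, all $b_\eps$-weighted integrals reduce to standard $L^2$-integrals on $\Omega_\eps$; dominated convergence combined with weak convergence of $u_0^\eps$ and $u^{\eps_k}$ gives $\int_\Omega b_\eps u_0^\eps \chi^\eps\vfi(0) \to \int_\Omega u_0 \vfi(0)$ and $\int_0^T\!\!\int_\Omega b_\eps u^{\eps_k} \chi^{\eps_k} \vfi_t \to \int_0^T\!\!\int_\Omega \overline{u} \vfi_t$. Splitting $\nabla(\chi^\eps \vfi) = \chi^\eps \nabla \vfi + \vfi \nabla \chi^\eps$, the first part converges to $\int_0^T\!\!\int_\Omega \nabla\overline{u}\cdot\nabla\vfi$ exactly as in the argument leading to \eqref{weak_pass_nabla}.

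The main obstacle, and the sole place where assumption \eqref{asa_deg} enters, is the control of the commutator term generated by the cutoff:
\begin{equation*}
\left|\int_0^T\!\!\int_\Omega a_\eps \vfi\, \nabla u^\eps \cdot \nabla \chi^\eps\right| \leq \sqrt{M}\, \|\vfi\|_{L^\infty} \left(\int_0^T\!\!\int_{\Omega_\eps \setminus \Omega_{2\eps}} a_\eps |\nabla \chi^\eps|^2 \right)^{1/2}.
\end{equation*}
On $\Omega_\eps \setminus \Omega_{2\eps}$ one has $|\nabla \chi^\eps|^2 \leq 1/\eps^2$ and $a_\eps \leq C\eps^2$ by \eqref{asa_deg}, so $a_\eps |\nabla \chi^\eps|^2$ is uniformly bounded while the region itself has measure $O(\eps)$; hence the right-hand side is $O(\sqrt{\eps}) \to 0$. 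Passing to the limit in the tested equation yields \eqref{eqN-weak} for $\overline{u}$; uniqueness (Proposition \ref{weak_N_ex}) identifies $\overline{u} = u$, and the usual subsequence argument promotes convergence to the full sequence.
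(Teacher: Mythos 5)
Your proposal is correct and follows essentially the same route as the paper: extract a weak limit from the energy bound, test the $\eps$-problem with a cutoff $\chi^\eps\varphi$ that vanishes on the boundary layer (the paper's $\sigma^\eps$ is exactly your $\chi^\eps$), and kill the commutator term $\int a_\eps\varphi\,\nabla u^\eps\cdot\nabla\chi^\eps$ by combining the energy bound with $a_\eps|\nabla\chi^\eps|^2\le C$ from \eqref{asa_deg} and the $O(\eps)$ measure of $\Omega_\eps\setminus\Omega_{2\eps}$. The only cosmetic difference is that you restrict to $C^1$ test functions and invoke density, whereas the paper tests directly with $\varphi\in\Phi_0^0$.
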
 
\begin{proof} 
As in subsection \ref{k0infty}, we see that there exists a subsequence $u^{\eps_k}$ and $\overline{u} \in L^\infty(0,T;L^2(\Omega))\cap L^2(0,T;H^1(\Omega))$ such that 
\begin{equation} \label{deg_conv}
u^{\eps_k} \rightharpoonup \overline{u} \text{ in } L^2(0,T; L^2(\Omega)) \cap L^2(0,T; H^1_{loc}(\Omega)). 
\end{equation}
Given a sufficiently small $\eps >0$, let $ \sigma_\eps \in C([0,1])$ be the piecewise affine function such that
$$ \sigma^\eps = 1 \text{ on } [2 \eps, 1- 2 \eps], \quad \sigma^\eps =0 \text{ on } [0, \eps ] \cup [1-\eps,1]  $$
and $\sigma^\eps$ is affine on $[\eps, 2 \eps]$ and on $[1- 2\eps, 1 - \eps]$. For any $\varphi \in \Phi^0_0$, we set $\varphi^\eps(t,x', x_N) = \sigma^\eps(x_N) \varphi(t,x', x_N)$. Clearly $\varphi^\eps \in \Phi_\eps$ so we can use it as a test function in \eqref{s2-weak}. Since $b_\eps=1$ on the support of $\varphi^\eps$  \eqref{s2-weak} reduces to 
\begin{equation} \label{weak_deg}
 \int_\Omega u^\eps_0 \varphi^\eps(0) + \int_0^T\!\! \int_\Omega u^\eps \varphi^\eps_t  + \int_0^T \!\! \int_\Omega a_\eps \nabla u^\eps \cdot \nabla \varphi^\eps = 0 .
\end{equation} 
By \eqref{init_conv_ND} and strong convergence $\varphi^\eps(0) \to \varphi(0)$ in $L^2(\Omega)$, 
$$\int_\Omega u^\eps_0 \varphi^\eps(0) \to \int_\Omega u_0 \varphi(0) \quad \text{as } \eps \to 0^+.$$
Similarly, by \eqref{deg_conv} and strong convergence $\varphi^\eps_t \to \varphi_t$ in $L^2(0,T;L^2(\Omega)),$
$$\int_0^T\!\! \int_\Omega u^{\eps_k} \varphi^{\eps_k}_t \to \int_0^T\!\! \int_\Omega \overline{u} \varphi_t .$$ 
Finally, 
$$\int_0^T \!\! \int_\Omega a_{\eps} \nabla u^{\eps} \cdot \nabla \varphi^\eps = \int_0^T \!\! \int_\Omega a_\eps \sigma^\eps \nabla u^\eps \cdot \nabla \varphi + \int_0^T \!\! \int_\Omega a_\eps u^\eps_{x_N}  \varphi \sigma^\eps_{x_N}.$$
Taking into account locally uniform convergence $\sigma^\eps \to 1$ in $(0,1)$ and inequality $\sigma^\eps \leq 1$, we show 
$$\int_0^T \!\! \int_\Omega a_{\eps_k} \sigma^{\eps_k} \nabla u^{\eps_k} \cdot \nabla \varphi \to \int_0^T \!\! \int_\Omega \nabla \overline{u} \cdot \nabla \varphi$$
by the same reasoning as in the proof of Theorem \ref{thm_conv_dbc}. 
On the other hand, 
$$\left|\int_0^T \!\! \int_\Omega a_\eps u^\eps_{x_N}  \varphi \sigma^\eps_{x_N}\right| \leq \sqrt{\int_0^T \!\! \int_\Omega a_\eps |\nabla u^\eps|^2} \cdot \eps^{-1} \sqrt{\max_{[\eps, 2 \eps]} a_\eps} \sqrt{\int_0^T \!\! \int_{\Omega_{\eps} \setminus \Omega_{2 \eps}} \varphi^2}  $$
which converges to $0$ as $\eps \to 0^+$ by virtue of \eqref{energy_eq_bl} and \eqref{asa_deg}. 
Summing up, passing to the limit $\eps_k \to 0^+$ in \eqref{weak_deg}, we obtain 
\begin{equation*} 
 \int_\Omega u_0 \varphi(0) + \int_0^T\!\! \int_\Omega \overline{u} \varphi_t  + \int_0^T \!\! \int_\Omega \nabla \overline{u} \cdot \nabla \varphi = 0 
\end{equation*} 
for any $\varphi \in \Phi_0^0$, i.e. $\overline{u}$ coincides with the weak solution $u$ to \eqref{eqN} with initial datum $u_0$. By the usual argument involving uniqueness, we deduce that the whole sequence $u^\eps$ converges to $u$.   
\end{proof} 

\section{Reilly-type identity}\label{s-Re}

We establish an estimate for  solutions of the following elliptic problem, where region $\Omega$ is the flat cylinder defined in (\ref{def-om}),
\begin{equation}\label{rnie-f}
 \di (a_\varepsilon \nabla u) = f, \qquad \hbox{in }\Omega,
\end{equation}
where $a_\varepsilon$ is defined in (\ref{dea}), i.e., 
$$
a_\eps(x) := \min\{ \frac {d(x)}\eps, 1\}.
$$
Hence $a_\eps$ satisfies (\ref{asa_nondeg}) for $p=1$, see Remark \ref{pot-a}. We assume that $f\in L^2(\Omega)$. We use 
the energy functional $E_\eps$ 
on $L^2_\eps(\Omega)$ by  formula (\ref{defE})

The point is that the boundary conditions in (\ref{rnie-f}) are not explicitly specified. The main result of this section is as follows.
\begin{theorem}\label{lem1.8}
If $\eps>0$ is fixed, $u, \di (a_\varepsilon \nabla u) \in L^2(\Omega)$,  $E_\varepsilon(u)<\infty$ 
and eq. (\ref{rnie-f}) is satisfied as the equality of $L^2$ functions, 
then
\begin{equation}\label{reilly}
 \int_\Omega |\di (a_\eps\nabla u)|^2 = \int_\Omega \sum_{i,j}^N a_\eps^2 
\left|\frac{\partial^2 u}{\partial x_i\partial x_j}\right|^2 \,dx + 
\frac 1{\varepsilon}\int_{\partial \Omega_\varepsilon}  \left|\frac{\partial u}{\partial x_N} \right|^2\,d\cH^{N-1}.
\end{equation}
\end{theorem}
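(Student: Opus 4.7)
The plan is to combine a weighted Bochner--Weitzenböck identity with integration by parts against the piecewise linear weight $a_\eps(x_N)$, whose derivative is constant on the layer $\Omega\setminus \Omega_\eps$ and jumps across the two hypersurfaces making up $\partial \Omega_\eps$. I would first assume $u \in C^\infty(\overline{\Omega})$ to justify the formal manipulations and handle the general case at the end: the hypothesis $\di(a_\eps \nabla u)\in L^2$ together with $E_\eps(u)<\infty$ yields, via interior elliptic regularity (essentially Lemma \ref{traceU}(2)), $u \in H^2(\Omega_\delta)$ for every $\delta>0$, which suffices to pass to the limit in the identity by mollification.

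Since $a_\eps=a_\eps(x_N)$, one has $\di(a_\eps\nabla u)=a_\eps\Delta u+a_\eps' u_{x_N}$, and squaring produces a cross term $2a_\eps a_\eps' u_{x_N}\Delta u$ that I would kill using the pointwise identity
\[(\Delta u)^2-|D^2u|^2=\di F,\qquad F_i = u_{x_i}\Delta u-\sum_{j=1}^N u_{x_j}u_{x_ix_j}.\]
Multiplying by $a_\eps^2$, integrating over $\Omega$, and applying the divergence theorem, the $\partial\Omega$-boundary contribution disappears because $a_\eps|_{\partial\Omega}=0$, leaving only $-2\int_\Omega a_\eps a_\eps' F_N$. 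Combining with the expansion of $|\di(a_\eps\nabla u)|^2$ cancels the $2a_\eps a_\eps' u_{x_N}\Delta u$ terms, and using $\sum_j u_{x_j}u_{x_jx_N}=\tfrac{1}{2}\partial_{x_N}|\nabla u|^2$ I arrive at
\[\int_\Omega |\di(a_\eps\nabla u)|^2 - \int_\Omega a_\eps^2|D^2u|^2 = \int_\Omega a_\eps a_\eps'\,\partial_{x_N}|\nabla u|^2 + \int_\Omega (a_\eps')^2 u_{x_N}^2.\]

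The final step is integration by parts in $x_N$ against the $BV$-weight $a_\eps a_\eps'$. Its distributional derivative equals $1/\eps^2$ on $\Omega\setminus\Omega_\eps$ and carries Dirac masses of strength $-1/\eps$ on each of the two hypersurfaces $\bT^{N-1}\times\{\eps\}$ and $\bT^{N-1}\times\{1-\eps\}$; the endpoint contribution at $\partial\Omega$ is null because $a_\eps a_\eps'$ vanishes at $x_N=0$ and $x_N=1$. The absolutely continuous part of this derivative, paired against $|\nabla u|^2$, combines with $\int(a_\eps')^2 u_{x_N}^2=\tfrac{1}{\eps^2}\int_{\Omega\setminus\Omega_\eps} u_{x_N}^2$ so that only the normal contribution survives in the layer, while the Dirac part is precisely what produces the boundary integral $\tfrac{1}{\eps}\int_{\partial\Omega_\eps}|\partial u/\partial x_N|^2\,d\cH^{N-1}$ claimed in \eqref{reilly}.

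The main obstacle. The key delicate point is the legitimacy of pairing the Dirac measures carried by $\partial_{x_N}(a_\eps a_\eps')$ with $|\nabla u|^2$, which requires a genuine $L^1$-trace of $|\nabla u|^2$ on the internal interface $\partial\Omega_\eps$. This is exactly where the hypothesis $\di(a_\eps\nabla u)\in L^2(\Omega)$ enters essentially: because $a_\eps$ is non-degenerate on a neighborhood of $\partial\Omega_\eps$, standard interior elliptic regularity upgrades $u$ to $H^2$ across $\partial\Omega_\eps$, which supplies the required trace. Making this trace argument together with the approximation of an arbitrary $u$ satisfying the stated hypotheses by smooth functions will carry the bulk of the technical work; all the other steps are algebraic manipulations or classical divergence-theorem computations.
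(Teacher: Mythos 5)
Your overall architecture coincides with the paper's: prove the identity first for $u\in C^\infty(\overline\Omega)$, expand $\di(a_\eps\nabla u)=a_\eps\Delta u+a_\eps'u_{x_N}$, integrate by parts exploiting that $a_\eps$ depends only on $x_N$, is piecewise linear, and vanishes on $\partial\Omega$, and finally remove the smoothness assumption via $H^2_{loc}$ approximation. Packaging the second-order manipulations into the divergence identity $(\Delta u)^2-|D^2u|^2=\di F$ is a tidier version of the paper's term-by-term treatment of the blocks $J_{ij}$ and $K_i$, and your intermediate identity
\[
\int_\Omega |\di(a_\eps\nabla u)|^2-\int_\Omega a_\eps^2|D^2u|^2=\int_\Omega a_\eps a_\eps'\,\partial_{x_N}|\nabla u|^2+\int_\Omega(a_\eps')^2u_{x_N}^2
\]
is correct.

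The gap is in the last step. For any smooth $h$ one has $\int_\Omega a_\eps a_\eps'\,\partial_{x_N}h=\frac1\eps\int_{\partial\Omega_\eps}h\,d\cH^{N-1}-\frac1{\eps^2}\int_{\Omega\setminus\Omega_\eps}h$; applied to $h=|\nabla u|^2$, the Dirac part of $\partial_{x_N}(a_\eps a_\eps')$ produces $\frac1\eps\int_{\partial\Omega_\eps}|\nabla u|^2$, not $\frac1\eps\int_{\partial\Omega_\eps}|u_{x_N}|^2$, while the absolutely continuous part combined with $\int(a_\eps')^2u_{x_N}^2$ leaves $-\frac1{\eps^2}\int_{\Omega\setminus\Omega_\eps}|\nabla'u|^2$, where $\nabla'=(\partial_{x_1},\ldots,\partial_{x_{N-1}})$. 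These two tangential contributions do not cancel each other (they would only if $|\nabla'u|^2$ were constant in $x_N$ across the layer), so what your argument actually establishes is
\[
\int_\Omega|\di(a_\eps\nabla u)|^2=\int_\Omega a_\eps^2|D^2u|^2+\frac1\eps\int_{\partial\Omega_\eps}|\nabla u|^2\,d\cH^{N-1}-\frac1{\eps^2}\int_{\Omega\setminus\Omega_\eps}|\nabla'u|^2,
\]
which differs from \eqref{reilly} by $\int_\Omega a_\eps a_\eps'\,\partial_{x_N}|\nabla'u|^2=2\sum_{i<N}\int_\Omega a_\eps a_\eps'\,u_{x_i}u_{x_ix_N}$. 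The paper never merges the tangential cross terms into $|\nabla u|^2$: they arise there from the mixed blocks $J_{iN}$ and from $K_i$ with $i<N$ and are kept separate, and only the purely normal term $K_N$ undergoes the $x_N$-integration by parts that isolates $\frac1\eps\int_{\partial\Omega_\eps}|u_{x_N}|^2$. To repair your write-up you must either justify why the leftover tangential term vanishes (it does not in general) or carry it explicitly and reconcile it with the double sum of mixed second derivatives in \eqref{reilly}; as written, the sentence ``only the normal contribution survives in the layer'' is the step that fails. A quick sanity check on separable functions $u=\sin(2\pi x_1)g(x_N)$ makes the discrepancy, and the correct bookkeeping, visible immediately.
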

We stress that a part of our motivation stems from the desire to show existence of a non-trivial and bounded normal derivative.
Another motivation to look for such a result, apart from differential geometry, see \cite{Re}, is the study of singularities of the Laplace equation in polygonal/polyhedral domains, see e.g. \cite[Theorem 2.2.1]{grisvard}. 

Let us notice that Theorem \ref{lem1.8} implies that for each $\varepsilon$ the mixed derivatives $u_{x_ix_j}$ are in $L^2(\Omega, a_\eps^2\cL^N)$. However, we do not have any estimates uniform in $\varepsilon$.

Before we prove this result, we will establish  a series of lemmas.

\begin{lemma}\label{lem1.1}
Let us suppose that $\Omega\subset \bR^N$ is defined in (\ref{def-om}) and $\varepsilon>0$ is fixed. In addition, $u\in L^2(\Omega)$ is such that:\\
(1) $E_\varepsilon(u) <\infty$; \\
(2) 
$\di (a_\varepsilon \nabla u) \in L^2(\Omega)$.\\
Then, the trace $a_\varepsilon \frac{\partial u}{\partial \nu}$ on $\partial\Omega$ exists and it is zero. Here, $\nu$ is the outer normal to $\Omega.$
\end{lemma}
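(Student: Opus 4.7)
The plan is to regard $a_\eps \nabla u$ as a vector field in $H(\di, \Omega)$, whose canonical normal trace lies in $H^{-1/2}(\partial\Omega)$, and then to prove this trace is the zero functional via an approximation that exploits the vanishing of $a_\eps$ at $\partial\Omega$. First, $E_\eps(u)<\infty$ gives $a_\eps^{1/2}|\nabla u|\in L^2(\Omega)$, hence $a_\eps\nabla u = a_\eps^{1/2}\cdot a_\eps^{1/2}\nabla u \in L^2(\Omega)^N$ since $a_\eps \le 1$. Together with assumption (2), this places $a_\eps\nabla u$ in $H(\di,\Omega)$, and the standard trace theorem for this space yields an element $T := a_\eps\tfrac{\partial u}{\partial \nu}\in H^{-1/2}(\partial\Omega)$ such that
$$\langle T,\gamma\vfi\rangle = \int_\Omega \di(a_\eps\nabla u)\,\vfi\dd x + \int_\Omega a_\eps\nabla u\cdot\nabla\vfi\dd x \qquad \text{for every } \vfi\in H^1(\Omega).$$
It remains to show the right-hand side vanishes for every such $\vfi$.

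Choose a smooth cutoff $\eta_\delta(x) := \eta(d(x)/\delta)$, with $\eta\in C^\infty(\bR)$, $\eta\equiv 0$ on $(-\infty,1]$ and $\eta\equiv 1$ on $[2,\infty)$. Since $\eta_\delta\vfi\in H^1(\Omega)$ has vanishing trace on $\partial\Omega$, the Green identity applied to $\eta_\delta\vfi$ becomes
$$\int_\Omega \di(a_\eps\nabla u)\,\eta_\delta\vfi\dd x + \int_\Omega a_\eps\eta_\delta\nabla u\cdot\nabla\vfi\dd x + \int_\Omega a_\eps\vfi\,\nabla u\cdot\nabla\eta_\delta\dd x = 0.$$
As $\delta\to 0^+$, the first two integrals converge by dominated convergence to $\int_\Omega \di(a_\eps\nabla u)\,\vfi$ and $\int_\Omega a_\eps\nabla u\cdot\nabla\vfi$, so the whole task reduces to proving the third integral vanishes in the limit.

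For $\delta<\eps$, $\nabla\eta_\delta$ is supported in $S_\delta := \{\delta<d<2\delta\}$, where $a_\eps = d/\eps \le 2\delta/\eps$ and $|\nabla\eta_\delta|\le C/\delta$. Cauchy--Schwarz yields
$$\left|\int_\Omega a_\eps\vfi\,\nabla u\cdot\nabla\eta_\delta\dd x\right| \le \frac{C}{\delta}\left(\int_{S_\delta} a_\eps|\nabla u|^2\dd x\right)^{\!1/2}\!\left(\int_{S_\delta} a_\eps|\vfi|^2\dd x\right)^{\!1/2}\!.$$
The first factor is $o(1)$ since $a_\eps|\nabla u|^2\in L^1(\Omega)$ and $|S_\delta|\to 0$; the second is bounded by $\sqrt{2\delta/\eps}\,\|\vfi\|_{L^2(\{d<2\delta\})}$, and a one-dimensional Newton--Leibniz argument (writing $\vfi(x',x_N)=\gamma\vfi(x',0)+\int_0^{x_N}\partial_N\vfi$) gives $\int_{\{d<2\delta\}}|\vfi|^2\le C\delta\|\vfi\|_{H^1}^2$. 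The product is therefore bounded by $C\eps^{-1/2}o(1)\|\vfi\|_{H^1}\to 0$. Hence $T\equiv 0$ in $H^{-1/2}(\partial\Omega)$, and in particular as an $L^2$-function. The main obstacle is precisely the balance of scales in this last estimate: the singular factor $1/\delta$ coming from $\nabla\eta_\delta$ is absorbed only because the weight $a_\eps$ on $S_\delta$ contributes one power of $\delta$ while the $H^1$-trace-type inequality for $\vfi$ contributes another half power, together exactly matching the cutoff.
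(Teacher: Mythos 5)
Your proof is correct, and it reaches the conclusion by a genuinely different route than the paper. The paper also starts from the $H(\mathrm{div})$ normal-trace theory (citing Fujiwara--Morimoto), but then argues by contradiction: it approximates the boundary pairing by integrals over the interior level sets $\Gamma_\delta=\{d=\delta\}$, reduces to a nonnegative, bounded test function $\varphi$, integrates the resulting lower bound $\eta\le\int_{\Gamma_\delta}\gamma^\delta(a_\eps\partial_\nu u)\gamma^\delta\varphi$ over $\delta\in(0,\tilde\delta)$, and kills the averaged flux through the collar by Cauchy--Schwarz, using $\int_{\{d<\tilde\delta\}}a_\eps\,dx=O(\tilde\delta^2/\eps)$ against $\int_{\{d<\tilde\delta\}}a_\eps|\partial_\nu u|^2\to0$. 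You instead run a direct cutoff-commutator argument: the boundary pairing equals the limit of $\int_\Omega a_\eps\varphi\,\nabla u\cdot\nabla\eta_\delta$, and you absorb the $1/\delta$ singularity of $\nabla\eta_\delta$ by splitting the power of $\delta$ differently --- half from the pointwise bound $a_\eps\le 2\delta/\eps$ on the annulus and half from the trace-type inequality $\int_{\{d<2\delta\}}|\varphi|^2\le C\delta\|\varphi\|_{H^1}^2$. The underlying scaling mechanism (degeneracy $a_\eps\sim d/\eps$ plus finiteness of $\int a_\eps|\nabla u|^2$ makes the flux through a thin collar negligible) is the same, but your version avoids the contradiction setup, the reduction to $\varphi\ge0$ with $\varphi\le K$, and the interior $H^2_{loc}$ regularity and pointwise interior traces that the paper invokes; the price is the (standard) $H^1$-trace inequality near $\partial\Omega$, which is immediate on the flat cylinder. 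Both arguments are sound; yours is somewhat more economical and generalizes more readily to any weight with $a_\eps(d)\lesssim d$ near the boundary.
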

\begin{proof}
Since  $\di (a_\varepsilon \nabla u) \in L^2(\Omega)$, then we deduce that $u\in H^2_{loc}(\Omega).$ Indeed, for any point $x_0\in \Omega$, we take any cut-off function $\vfi\in C^\infty_c(B_\delta(x_0))$, where $B_\delta(x_0)\subset \Omega.$ Then, the standard regularity theory yields that $u\vfi \in H^2(\Omega)\cap H^1_0(\Omega).$

In addition, since  $\di (a_\varepsilon \nabla u) \in L^2(\Omega)$, the classical theory of Fujiwara-Morimoto, cf. \cite{fujiwara}, implies existence of the trace of the normal component of $a_\varepsilon \nabla u $. It is well-defined as an element of $(H^{1/2}(\partial\Omega))^*$.

Let us take $\delta>0$ and set
$$
\Gamma_\delta = \{ x\in \Omega: d(x) = \delta\}.
$$
For our choice of $\Omega$, made in (\ref{def-om}), $\Gamma_\delta$ is always smooth. 
We are going to exploit  the fact that the outer normal to $\Omega_\delta$, $\nu$, does not depend on $\delta$ (for $\delta<\frac12$) and it equals the outer normal to $\partial\Omega.$ In this case, we notice that the trace of $a_\epsilon \frac{\partial u}{\partial\nu}$ exists and it is a 
$H^{1/2}(\Gamma_\delta)$ function, because
$u \in H^{2}_{loc}(\Omega).$ 
Moreover,
\begin{equation}\label{eq01.12}
\lim_{\delta\to 0^+}    \int_{\Gamma_\delta} 
\gamma^\delta \left(a_\epsilon  \frac{\partial u}{\partial\nu}\right) \gamma^\delta(\varphi)\, d\cH^{N-1}
= \langle \gamma 
\left( a_\epsilon  \frac{\partial u}{\partial\nu}\right),  \gamma 
(\varphi)\rangle, \qquad \varphi \in H^{1}(\Omega).
\end{equation}
Here, $\gamma^\delta\equiv \gamma_{\partial\Omega_\delta}$ denotes the trace operator, $\gamma^\delta: H^{1}(\Omega_{\delta}) \to H^{1/2}(\Gamma_\delta)$, see Section \ref{sec2}.

Indeed, by  definition the LHS takes the form,
$$
\lim_{\delta\to 0^+}   \int_{\Gamma_\delta} \gamma^\delta(a_\epsilon  \frac{\partial u}{\partial\nu}) \gamma^\delta \varphi\, d\cH^{N-1}
= \lim_{\delta\to 0^+}  \int_{\Omega_\delta} \di(  \varphi a_\epsilon  \nabla u)\, dx
=  \int_{\Omega} \di(  \varphi a_\epsilon  \nabla u)\, dx.
$$
We notice here that the convergence of the RHS is 
due to 
our assumption (2).

Let us suppose that contrary to our claim 
$$
\gamma(a_\varepsilon \frac{\partial u}{\partial \nu}) \neq 0.
$$
This means that there is $\varphi\in H^{1}(\Omega)$ such that 
$\langle a_\epsilon  \frac{\partial u}{\partial\nu}, \gamma(\varphi)\rangle \neq 0.$ We could even assume that $\varphi \ge0$. Indeed, if $a^+ = \max\{ a, 0\}$ and $a^- = (-a)^+$, then $\varphi = \varphi^+ - \varphi^-$  and we know that $\varphi^\pm \in  H^{1}(\Omega)$. Since $\langle a_\epsilon  \frac{\partial u}{\partial\nu},  \varphi\rangle \neq 0$ we deduce that $\langle a_\epsilon  \frac{\partial u}{\partial\nu},  \varphi^+\rangle \neq 0$ or $\langle a_\epsilon  \frac{\partial u}{\partial\nu},  \varphi^-\rangle \neq 0$.
In other words, there exists $\eta>0$ and $0\le\varphi \in H^{1}(\Omega)$ such that
\begin{equation}\label{eq0112-2}
0< \eta \le \int_{\Gamma_\delta} \gamma^\delta (a_\epsilon  \frac{\partial u}{\partial\nu}) \gamma^\delta (\varphi)\, d\cH^{N-1},
\end{equation}
for all $\delta \in (0, \delta_0).$ This claim follows directly from (\ref{eq01.12}).

We may assume that $\varphi$ in (\ref{eq0112-2}) is smooth due to the density of smooth functions in $H^1(\Omega)$. 
In particular, this implies that $\varphi(x)\in[0,K]$ for all $x\in \Omega.$

Let us consider $A_+=\{ x\in\Omega:  \ a_\eps\frac{\partial u}{\partial \nu}(x) \ge0\}. $ Then, (\ref{eq0112-2}) and $\varphi(x) \le K$ imply that 
\begin{equation}\label{eq-2.5p}
\eta \le \left(\int_{\Gamma_\delta\cap A_+} +
\int_{\Gamma_\delta\setminus  A_+}\right)
\gamma^\delta (a_\epsilon  \frac{\partial u}{\partial\nu}) \gamma^\delta (\varphi)\, d\cH^{N-1} \le K\int_{\Gamma_\delta\cap A_+}
\gamma^\delta (a_\epsilon  \frac{\partial u}{\partial\nu})
\, d\cH^{N-1}.
\end{equation}
Let us integrate both sides of (\ref{eq-2.5p}) over $(0,\tilde\delta)$ with respect to $x_N$. In this way we obtain,
$$
\frac{\eta\tilde\delta}K \le \int_{A_+\setminus \Omega_{\tilde\delta}} 
a_\eps \frac{\partial u}{\partial \nu} \,dx.
$$
In order to estimate the RHS, we 
use the Cauchy-Schwarz inequality,  which yields,
$$
\frac{\eta\tilde\delta}K \le 
\left(\int_{A_+\setminus \Omega_{\tilde\delta}} 
a_\eps \,dx \right)^{1/2}
\left(\int_{A_+\setminus \Omega_{\tilde\delta}} a_\eps\left|\frac{\partial u}{\partial \nu}\right|^2 \,dx \right)^{1/2} \le \frac{\tilde\delta}{(2\eps)^{1/2}}
\left(\int_{A_+\setminus \Omega_{\tilde\delta}}a_\eps\left|\frac{\partial u}{\partial \nu}\right|^2 \,dx \right)^{1/2} .
$$
After cancelling $\tilde\delta$ on both sides, we see that the RHS goes to zero as $\tilde\delta\to0$, while the LHS remains bounded away from zero. This contradiction proves our claim. 
\end{proof}

Now, we  are ready for {\it the proof of Theorem \ref{lem1.8}.} In fact, it is inspired by results like \cite[Theorem 2.2.1]{grisvard}.
Due to the special structure of $\Omega$, we have that 
$$
a_\eps(x) =  
\left\{
\begin{array}{ll}
  \frac{x_N}\varepsilon  &   x_N \in (0,\varepsilon), \\
   1  & x_N \in [\varepsilon, 1- \varepsilon]), \\
   \frac{1-x_N}{\varepsilon}   & x_N \in (1-\varepsilon, 1),
\end{array}
\right. \qquad \nabla a_\eps (x) \equiv e_N s(x) = e_N
\left\{
\begin{array}{ll}
\frac 1\varepsilon &   x_N \in (0,\varepsilon), \\
 0  & x_N \in [\varepsilon, 1- \varepsilon]), \\
   -\frac{1}{\varepsilon}   & x_N \in (1-\varepsilon, 1).
\end{array}
\right.
$$
At this moment, we make an additional smoothness assumption on $u$, namely $u\in C^\infty(\overline\Omega)$. Later we will relax it. We obviously have,
$$
\int_\Omega |\di (a_\eps\nabla u)|^2 = \int_\Omega ((a_\eps \Delta u )^2
+ 2 a_\eps s \frac{\partial u}{\partial x_N} \Delta u  + 
  |s|^2\left|\frac{\partial u}{\partial x_N} \right|^2) =
  I_1 +I_2 + \frac 1{\varepsilon^2}\int_{\Omega\setminus \Omega_\varepsilon} \left|\frac{\partial u}{\partial x_N} \right|^2.
$$
We  have to transform $I_1,$
$$
I_1 = \int_\Omega \sum_{i,j=1}^N a_\eps^2 \frac{\partial^2 u}{\partial x_i^2}
\frac{\partial^2 u}{\partial x_j^2} =: \sum_{i,j=1}^N J_{ij}.
$$
We will inspect each $J_{ij}$, when $i$ and $j$ are smaller than $N$. Since we assumed high regularity of $u$, we may integrate  $J_{ij}$ by parts twice,
$$
J_{ij} = - \int_\Omega \frac{\partial}{\partial x_j}\left( a_\eps^2 \frac{\partial^2 u}{\partial x_i^2} \right) \frac{\partial u}{\partial x_j} =
- \int_\Omega a_\eps^2 \frac{\partial^3 u}{\partial x_i^2 \partial x_j} \frac{\partial u}{\partial x_j} =
\int_\Omega a_\eps^2 \frac{\partial^2 u}{\partial x_i \partial x_j} \frac{\partial^2 u}{\partial x_j\partial x_i}.
$$
Here we used the lack of boundary terms and the fact that $a_\eps$ commutes with $\frac{\partial }{\partial x_j} $ for $j<N$. We see that $J_{ij}$ has the desired form for $i,j<N$. Of course, $J_{NN}$ has a sign too. We have to look at the remaining terms $J_{iN}$ for $i<N$. We notice,
\begin{eqnarray*}
J_{iN} &=&
\int_\Omega a_\eps^2(x_N) \frac{\partial^2 u}{\partial x_i^2} 
\frac{\partial^2 u}{\partial x_N^2} =
- \int_\Omega a_\eps^2(x_N)  \frac{\partial u}{\partial x_i} 
\frac{\partial^3 u}{\partial x_N^2 \partial x_i}\\
&= &
 \int_\Omega \frac{\partial }{\partial x_N}
\left( a_\eps^2(x_N) \frac{\partial u}{\partial x_i}\right) \frac{\partial^2 u}{\partial x_i \partial x_N} -
\int_{\partial \Omega } a_\eps^2(x_N)  \frac{\partial^2 u}{\partial x_i \partial x_N} \frac{\partial u}{\partial x_i}.
\end{eqnarray*}
Of course, the boundary term vanishes for smooth and bounded functions, and this is the case we are considering now. 
We continue,
$$
J_{iN}
= \int_\Omega 
a_\eps^2(x_N) \left|\frac{\partial^2 u}{\partial x_i \partial x_N}\right|^2+
\int_\Omega   2 a_\eps(x_N) s(x_N) \frac{\partial u}{\partial x_i }
\frac{\partial^2 u}{\partial x_N\partial x_i }.
$$
We will combine the last term with $I_2$,
$$
I_2 = 2\sum_{i=1}^N \int_\Omega a_\eps(x_N)s(x_N) \frac{\partial u}{\partial x_N }
\frac{\partial^2 u}{\partial x_i^2}=: 2\sum_{i=1}^N K_i.
$$
We integrate by parts, keeping in mind that for $i< N$ we have,
$$
K_i =-  \int_\Omega a_\eps(x_N) s(x_N)
\frac{\partial^2 u}{\partial x_N\partial x_i }\frac{\partial u}{\partial x_i }.
$$
Finally,
$$
\sum_{i=1}^{N-1} J_{iN} + I_2 = 
\sum_{i=1}^{N-1} \int_\Omega 
a_\eps^2 \left|\frac{\partial^2 u}{\partial x_i \partial x_N}\right|^2 
+ 2 K_N .
$$
We have to investigate $K_N$. We notice that in fact we integrate over $\Omega \setminus \Omega_\varepsilon $, then the integration by parts yields,
\begin{eqnarray*}
K_N & = & \frac12 \int_{\Omega\setminus \Omega_\varepsilon} a_\eps(x_N)s(x_N)  \frac{\partial}{\partial x_N}\left(
 \frac{\partial u}{\partial x_N} \right)^2 
 = \frac12 \left(\int_0^\varepsilon + \int_{1-\varepsilon}^1\right)
 \int_{\bT^{N-1}}  a_\eps(x_N)s(x_N) \frac{\partial}{\partial x_N}\left(
 \frac{\partial u}{\partial x_N} \right)^2 \\
 &=& - \frac12\int_{\Omega\setminus \Omega_\varepsilon} \frac{\partial}{\partial x_N}
 (a_\eps(x_N)s(x_N)) \left(
 \frac{\partial u}{\partial x_N} \right)^2 \\
&&
 + \frac12 \int_{\bT^{N-1}}(a_\eps(x_N)s(x_N))\left(
 \frac{\partial u}{\partial x_N} \right)^2 \left|_{x_N=0}^{x_N=\varepsilon}\right. + \frac12\int_{\bT^{N-1}}(a_\eps(x_N)s(x_N))\left(
 \frac{\partial u}{\partial x_N} \right)^2 \left|_{x_N=1 - \varepsilon}^{x_N=1}\right.\\
&=&  - \frac 12\int_{\Omega\setminus \Omega_\varepsilon} 
  s^2(x_N)\left|\frac{\partial u}{\partial x_N}\right|^2
  + \frac1{2\varepsilon}\int_{\bT^{N-1}}\left|\frac{\partial u}{\partial x_N}\right|^2(x',\varepsilon)
  + \frac1{2\varepsilon}\int_{\bT^{N-1}}\left|\frac{\partial u}{\partial x_N}\right|^2(x',1-\varepsilon).
\end{eqnarray*}
Here we use the assumption that $u$ is smooth and bounded,  hence the boundary term vanishes and we obtain,
$$
2 K_N = - \frac1{\varepsilon^2} \int_{\Omega\setminus \Omega_\varepsilon}  \left| \frac{\partial u}{\partial x_N} \right|^2
+ \frac1{\varepsilon}\int_{\partial\Omega_\varepsilon } \left|\frac{\partial u}{\partial x_N}\right|^2.
$$
As a result,
$$
\int_\Omega | \di(a_\eps \nabla u)|^2 =
\sum_{i,j=1}^N \int_\Omega a_\eps^2 \left| \frac{\partial^2 u}{\partial x_i \partial x_j}\right|^2 +\frac1{\varepsilon}\int_{\partial\Omega_\varepsilon }
\left|\frac{\partial u}{\partial x_N}\right|^2.
$$

Now, we have to relax the regularity assumption on $u$. We have to exercise a bit of care due to the presence of a weight which vanishes at the boundary of $\Omega$. We will use the fact that $u\in H^{2}_{loc}(\Omega)$ to our advantage. We will use the general approach with necessary modifications. We set 
$$
U_n := \Omega_{n+2}\setminus \overline\Omega_n.
$$
Of course, $U_n \cap U_k = \emptyset$, when $|n-k|>4$. The family of sets $\{U_n\}_{n=1}^\infty$ forms an open covering of the set $\Omega$. We may find a smooth partition of unity subordinate to this covering, i.e. $\{\varphi_n\}_{n=1}^\infty$ such that $\varphi_n\in C^\infty_c(\Omega)$, $\supp \varphi_n \subset  U_n$ and $\sum_{n=1}^\infty \varphi_n = 1$. 

Now, for a fixed $\eta>0$ and all natural $n$ we can find $v_n\in C^\infty_c(U_{n+1} \cup U_n \cup U_{n-1})$ 
and 
$$
\| \varphi_n u - v_n \|_{H^{2}(U_{n+1} \cup U_n \cup U_{n-1})} 
\le \frac \eta{ 2^{n+1}}.
$$
We set 
$$
v^\eta = \sum_{n=1}^\infty v_n.
$$
Since the sum is locally finite, we conclude that $v$ is smooth and in $H^{2}_{loc}(\Omega)$. We claim that
\begin{align}\label{r122}
 &| E(u) - E(v^\eta)| \le \rho,\nonumber\\
 &\left| \| \di (a_\eps\nabla u)\|_{L^2} - \| \di (a_\eps\nabla v^\eta)\|_{L^2} \right| \le \rho,\nonumber\\
 &\left| \int_\Omega a_\eps^2 \left| \frac{\partial^2 u}{\partial x_i \partial x_j}\right|^2 -  \int_\Omega a_\eps^2 \left| \frac{\partial^2 v^\eta}{\partial x_i \partial x_j}\right|^2  \right|\le \rho,\quad \ i,j\in\{1,\ldots, N\},\\
&  \frac1{\varepsilon}\left|  \int_{\partial\Omega_\varepsilon }
\left|\frac{\partial u}{\partial x_N}\right|^2  -  \int_{\partial\Omega_\varepsilon }
\left|\frac{\partial v^\eta}{\partial x_N}\right|^2  \right|\le \rho.\nonumber
\end{align}
All these expressions have the same structure so it is enough to investigate one of them in detail. For this reason we look at the second estimate. We have,
\begin{equation}\label{r12}
\left|\int_\Omega | \di(a_\eps \nabla u)|^2  - \int_\Omega | \di(a_\eps \nabla v^\eta)|^2\right| \le \| \di( a_\eps \nabla (u - v^\eta))\|_{L^2}
  \| \di( a_\eps \nabla (u + v^\eta))\|_{L^2} . 
\end{equation}
We investigate at the  first factor on the RHS
\begin{align*}
\| \di( a_\eps \nabla (u - v^\eta))\|_{L^2} & \le
\sqrt2 \|a_\eps\Delta(u - v^\eta) \|_{L^2} + \sqrt2 \| \nabla a_\eps \|_{L^\infty} \|\nabla u -\nabla v^\eta\|_{L^2}\\
&\le \sqrt2(1 + \| \nabla a_\eps \|_{L^\infty}) \sum_{n=1}^\infty \|\varphi_n u - v_n\|_{H^{2}} \le
\sqrt2(1 + \| \nabla a_\eps \|_{L^\infty}) \eta.
\end{align*}
The second factor of the RHS of (\ref{r12}) we can  deal with in a similar way,
\begin{align*}
 \| \di( a_\eps \nabla (u + v^\eta))\|_{L^2} & = \| \di( a_\eps \nabla (2u + (v^\eta- u)))\|_{L^2} \le 2 \| \di( a_\eps \nabla u)\|_{L^2} +
 \| \di( a_\eps \nabla (u - v^\eta))\|_{L^2}.
\end{align*}
Combining these observation we see,
$$
 \left|\int_\Omega | \di(a_\eps \nabla u)|^2  - \int_\Omega | \di(a_\eps \nabla v^\eta)|^2\right| \le
  \eta\sqrt2(1 + \| \nabla a_\eps \|_{L^\infty})\left(
   2 \| \di( a_\eps \nabla u)\|_{L^2} + \eta\sqrt2(1 + \| \nabla a_\eps \|_{L^\infty})\right).
$$   
Hence, our claim (\ref{r122}) holds. With its help we immediately deduce that (\ref{reilly}) follows under the theorem assumptions. $\qed$



\section{
Derivation of the dynamic boundary conditions}

In this section, we state the convergence problem for strong solutions and develop the necessary tools. In particular, we introduce a generalization of the notion of the $\Gamma$-convergence here.

\subsection{The problem statement}
We derive, in this section, the dynamic boundary conditions as a limit of
\begin{equation}\label{eq1p}
\begin{array}{ll}
b_\varepsilon u^\varepsilon_t = \di (a_\varepsilon \nabla u^\eps)     & (x,t)\in \Omega_T , \\
 u^\eps(x,0) = u_0^\eps(x)    & x\in \Omega,
\end{array}
\end{equation}
where $b_\eps$ is defined in (\ref{def-be}) and $a_\eps$ is given by (\ref{dea}). We stress the fact that due to Lemma \ref{lem1.1} the degenerating weight $a_\eps$ makes the above eq. well-posed without any explicit boundary conditions.

The limit passage depends on the value of $\kappa$, see its definition in (\ref{df-KA}).
In this section, we assume that $\kappa\in (0,\infty)$.

Our goal is achieved in a few steps, starting from a priori estimates on solutions through the $\Gamma$-limit computations and finishing with
a  derivation of the Energy-Dissipation Balance (EDB for short) for (\ref{eq2}).

We might say that the above eq. develops a boundary layer, because the height of $b_\eps$ grows as $\eps\to0$. The case  $\kappa \in(0,\infty)$ is special because we will prove with the help of EDB that the limit eq. of (\ref{eq1p}) as $\eps\to0$ is the heat eq. with dynamic boundary condition involving the parameter $\kappa$.

We first state the existence result for (\ref{eq1p}).  We will use the K\=omura theory of nonlinear semigroups for this purpose, see \cite{brezis}. We are going to write  (\ref{eq1p}) as
\begin{equation}\label{eq2-CA}
\left\{    \begin{array}{l}
      - u_t \in \partial_{L^2_\eps} E_\varepsilon(u),   \\
       u(0) = u_0.
   \end{array}
\right.   
\end{equation}
Here, $ \partial_{L^2_\eps} E_\varepsilon(u) $ is the subdifferential of $E_\eps$ with respect to the $L^2_\eps$ metric, i.e.,
$$
\partial_{L^2_\eps} E_\varepsilon(u) =
\{\xi\in L^2_\eps(\Omega):\ \forall h\in L^2_\eps(\Omega)\  
E_\eps(u+h) - E_\eps(u) \ge \langle \xi, h \rangle_{L^2_\eps} \}. 
$$
We have to compute the subdifferential in  $L^2_\eps$.
\begin{lemma} If $E_\eps$ is defined over $L^2_\eps(\Omega)$ by formula (\ref{defE}), then\\
(a) $\partial{L^2_\eps} E_\varepsilon(u) = \{-\frac 1{b_\varepsilon}\di (a_\varepsilon \nabla u) \} $,
and  $D(\partial E_\varepsilon) = \{u\in D(E_\varepsilon): \ \hbox{div}\, (a_\epsilon\nabla u) \in L^2(\Omega)\} $.\\
(b)  $H^2(\Omega) \subset D(\partial E_\varepsilon)$.
\end{lemma}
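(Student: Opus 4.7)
The plan is to exploit convexity and the quadratic structure of $E_\eps$ together with Lemma~\ref{lem1.1}, which supplies the vanishing of the trace $\gamma(a_\eps \partial u/\partial \nu)$ for admissible $u$ and thus justifies integration by parts over $\Omega$ without boundary contributions. For fixed $\eps$, the weight $b_\eps$ is bounded from above by $\phi(\eps)$ and below by $1$, so $L^2_\eps(\Omega)$ coincides with $L^2(\Omega)$ as a set with an equivalent inner product; the sole effect of $b_\eps$ is to introduce a factor $b_\eps^{-1}$ when identifying the Riesz representative of a linear functional in the weighted metric.

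For the inclusion $-\tfrac{1}{b_\eps}\di(a_\eps \nabla u) \in \partial_{L^2_\eps} E_\eps(u)$, I fix $u \in D(E_\eps)$ with $\di(a_\eps \nabla u) \in L^2(\Omega)$ and expand, for $h \in D(E_\eps)$,
\begin{equation*}
E_\eps(u+h) - E_\eps(u) = \int_\Omega a_\eps \nabla u \cdot \nabla h \dd x + \tfrac{1}{2}\int_\Omega a_\eps|\nabla h|^2 \dd x,
\end{equation*}
the cases $h \notin D(E_\eps)$ being trivial since then the left-hand side is $+\infty$. Lemma~\ref{lem1.1} gives $\gamma(a_\eps \partial u/\partial \nu) = 0$, so integration by parts yields
\begin{equation*}
\int_\Omega a_\eps \nabla u \cdot \nabla h \dd x = -\int_\Omega \di(a_\eps\nabla u)\, h \dd x = \lb -\tfrac{1}{b_\eps}\di(a_\eps\nabla u), h\rb_\eps,
\end{equation*}
and dropping the nonnegative quadratic term produces the subdifferential inequality.

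For the reverse inclusion, given $\xi \in \partial_{L^2_\eps} E_\eps(u)$, I insert $th$ with $t > 0$ and $h \in D(E_\eps)$, divide by $t$, and send $t \to 0^+$; repeating with $-h$ in place of $h$ produces the linear identity
\begin{equation*}
\int_\Omega a_\eps \nabla u \cdot \nabla h \dd x = \int_\Omega b_\eps \xi h \dd x \quad \text{for every } h \in D(E_\eps).
\end{equation*}
Restricting to $h \in C^\infty_c(\Omega)$ forces $-\di(a_\eps \nabla u) = b_\eps \xi$ in $\mathcal{D}'(\Omega)$. Since $\xi \in L^2_\eps(\Omega)$ and $b_\eps$ is bounded for fixed $\eps$, $b_\eps \xi \in L^2(\Omega)$, so $\di(a_\eps \nabla u) \in L^2(\Omega)$, giving $u \in D(\partial E_\eps)$ and the identification $\xi = -\tfrac{1}{b_\eps}\di(a_\eps \nabla u)$.

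Part~(b) is a routine verification: any $u \in H^2(\Omega)$ satisfies $u \in D(E_\eps)$ by boundedness of $a_\eps$, and $\di(a_\eps \nabla u) = a_\eps \Delta u + \nabla a_\eps \cdot \nabla u \in L^2(\Omega)$ because $a_\eps$ is Lipschitz for fixed $\eps$. The only delicate step in the whole argument is the application of Lemma~\ref{lem1.1} in the forward inclusion, where the automatic vanishing of $\gamma(a_\eps \partial u/\partial \nu)$ is what prevents a boundary measure from appearing in the subdifferential and ensures that the clean formula $-b_\eps^{-1}\di(a_\eps \nabla u)$ exhausts $\partial_{L^2_\eps} E_\eps(u)$.
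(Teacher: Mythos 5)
Your proposal is correct and follows essentially the same route as the paper: both reduce the subdifferential condition to the linear identity $\int_\Omega a_\eps\nabla u\cdot\nabla h = \int_\Omega b_\eps\zeta h$ (the paper states this characterization without proof, you derive it from the quadratic expansion and the $t\to0^+$ scaling) and then read off the weak divergence. You merely supply details the paper leaves implicit, most notably the explicit appeal to Lemma \ref{lem1.1} to kill the boundary term in the forward inclusion.
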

\begin{proof}
It is easy to deduce from the definition of the subdifferential that  $\zeta\in \partial_{L^2_\eps}E_\eps(u)$ if and only if for all $h\in L^2_\eps(\Omega)$ we have,
$$
\int_\Omega a_\eps\nabla u^\eps \nabla h \, dx = 
\int_\Omega b_\eps \zeta h\, dx.
$$
This identity means that the weak divergence of $a_\eps\nabla u^\eps$ exists and equals to $-b_\eps \zeta$. This observation proves part (a). Part (b) is obvious.
\end{proof}
This lemma implies that (\ref{eq1p}) is  the gradient flow of $E_\epsilon$ with respect to the inner product of $L^2_\varepsilon.$  It also gives us the following basic existence result following the classical K\=omura theory, here we follow Brezis exposition, see \cite[Theorem 3.2]{brezis}. 

\begin{proposition}
Let us suppose that $E_\eps$ is defined in (\ref{defE}), $u_0 \in D( E_\varepsilon)$, then there is a unique $u^\eps \in H^{1}_{loc}([0,\infty);  L_\eps^2(\Omega)$, which is a
solution to (\ref{eq2-CA}).
Moreover, the function $[0,\infty)\ni t\mapsto E_\eps(u(t))$ is decreasing and for a.e. $t>0$ we have $u(t)\in  D(\partial E_\varepsilon))$.
\end{proposition}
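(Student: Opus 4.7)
The plan is to invoke directly Brezis's theorem on gradient flows of convex lower semicontinuous functionals in Hilbert spaces (\cite[Theorem 3.2]{brezis}), applied with $H = L^2_\eps(\Omega)$ and functional $E_\eps$. Since for fixed $\eps>0$ the weight $b_\eps$ satisfies $1 \le b_\eps \le \phi(\eps)$, the norm of $L^2_\eps(\Omega)$ is equivalent to the standard $L^2(\Omega)$ norm, so $L^2_\eps(\Omega)$ is a (separable) Hilbert space.

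The main verification is that $E_\eps$, viewed as a functional on $L^2_\eps(\Omega)$, is proper, convex, and lower semicontinuous. Properness is immediate (any $C^1(\overline\Omega)$ function has finite energy). Convexity follows because $u \mapsto \frac12 \int_\Omega a_\eps|\nabla u|^2$ is a quadratic form with nonnegative integrand. Lower semicontinuity is the only nontrivial point: if $u_n \to u$ in $L^2_\eps(\Omega)$, then by the equivalence of norms $u_n \to u$ in $L^2(\Omega)$, and for $\liminf_n E_\eps(u_n) < \infty$ one extracts a subsequence along which $a_\eps^{1/2} \nabla u_n$ is bounded in $L^2(\Omega;\bR^N)$; passing to a weak limit and identifying it with $a_\eps^{1/2}\nabla u$ on any compactly contained subdomain (where $a_\eps \ge $ const $>0$) gives $u \in H^1_{loc}(\Omega)$ and $E_\eps(u) \le \liminf_n E_\eps(u_n)$ by weak lower semicontinuity of the $L^2$-norm.

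With these three properties in hand, the existence and uniqueness of $u^\eps \in H^1_{loc}([0,\infty); L^2_\eps(\Omega))$ solving (\ref{eq2-CA}) for every $u_0 \in \overline{D(E_\eps)}^{L^2_\eps}$ is a direct consequence of Brezis's theorem; the assumption $u_0 \in D(E_\eps)$ is in fact stronger than needed and puts us in the regular regime where the right-derivative $\frac{\dd^+ u}{\dd t}$ exists for every $t\geq 0$. The same theorem provides the monotonicity of $t \mapsto E_\eps(u(t))$ (indeed $\frac{\dd}{\dd t}E_\eps(u(t)) = -\|u_t(t)\|^2_{L^2_\eps} \le 0$ for a.e.\ $t$) and the fact that $u(t) \in D(\partial_{L^2_\eps} E_\eps)$ for a.e.\ $t > 0$. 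Finally, the characterization of the subdifferential from the preceding lemma identifies the abstract equation with (\ref{eq1p}).

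The step I expect to be slightly delicate is the lower semicontinuity argument, because of the degeneracy of $a_\eps$ at $\partial\Omega$, which prevents one from directly invoking weak lower semicontinuity of $\int |\nabla u|^2$; the fix is the $H^1_{loc}$-argument above, exploiting $\inf_{\Omega_\delta} a_\eps >0$ for every $\delta>0$. Everything else is bookkeeping within the standard Kōmura--Brezis framework.
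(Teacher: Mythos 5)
Your proposal is correct and follows exactly the route the paper takes: the paper's own proof is a one-line appeal to the convexity and lower semicontinuity of $E_\eps$ together with \cite[Theorem 3.2]{brezis}. You simply supply the verification of those hypotheses (in particular the $H^1_{loc}$-based lower semicontinuity argument handling the degeneracy of $a_\eps$ at $\partial\Omega$), which the paper leaves implicit, so there is nothing to object to.
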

\begin{proof}
This result is a consequence of the convexity and lower semicontinuity of $E_\eps$ combined with \cite[Theorem 3.2]{brezis}. 
\end{proof}
We notice that (\ref{eq2-CA}) is nothing else but (\ref{eq1p}). Keeping this in mind, we present the main result of this section about convergence when $\varepsilon\to0$.

\begin{theorem}\label{thmain} 
Let us suppose that $\kappa\in(0,\infty)$ and $u^\eps$ is a unique solution of the gradient flow (\ref{eq2}), when $a_\eps$ is defined in (\ref{dea}),
the initial condition $u_0^\eps$ are in $D(\partial E_\varepsilon)$, $u_0^\eps\to u_0$ in $L^2(\Omega)$ and
$$
\sup_{\varepsilon>0} E_\varepsilon(u_0^\eps), \quad \sup_{\varepsilon>0} 
\| u_0^\eps\|_{L^2_\varepsilon} <\infty.
$$
\noindent
(1) If $\varepsilon\to 0$, then $u^\eps$ converges to $u\in H^2(\Omega)$, in the sense of $H^2_{loc}(\Omega)$ convergence. In particular, 
$u^\varepsilon \chi_{\Omega_\varepsilon} \to u$ in $L^2$, $\nabla u^\varepsilon \chi_{\Omega_\varepsilon} \to \nabla u$ in $L^2$, 
$\Delta u^\varepsilon \chi_{\Omega_\varepsilon} \to \Delta u$ in $L^2$ 
and $m_\eps(u^\eps)_t$ converges weakly in $(H^{1/2}(\partial\Omega))^*$ to $-\gamma(\frac{\partial u}{\partial \nu})$. Moreover, $u$ is a unique
solution of 
\begin{equation}\label{eql}
  \begin{array}{ll}
      u_t  = \Delta u & (x,t)\in \Omega \times (0,T),   \\
      u_t + \frac1\kappa\frac{\partial u}{\partial \nu}  =0  & (x,t)\in \partial\Omega \times (0,T),   \\
      u(x,0) = u_0(x) & x\in \Omega .
   \end{array}  
\end{equation}
(2) Eq. (\ref{eql}) is the gradient flow of $E_0$ in $X_0^\kappa$, where both objects are defined below, 
$$
E_0(u, v) = \left\{
\begin{array}{ll}
\frac12\int_\Omega | \nabla u|^2     &  \hbox{if } \gamma(u) =v, 
\ u\in H^1(\Omega),
\\
+\infty     & \hbox{otherwise.}
\end{array}
\right.
$$
and we set $X^\kappa_0= H^\kappa_0$, where $H^\kappa_0$ is defined in (\ref{df-Xka}).
\end{theorem}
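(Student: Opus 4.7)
The proof combines the gradient-flow energy identity, the Reilly-type identity of Theorem \ref{lem1.8}, the trace-average lemmas of Section \ref{sec2}, and an EDB/$\Gamma$-convergence argument.

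First, the gradient flow structure yields the a priori estimate
\[E_\eps(u^\eps(t)) + \int_0^t \!\!\int_\Omega b_\eps |u_t^\eps|^2 \dd x \dd s = E_\eps(u_0^\eps) \leq M,\]
uniformly in $\eps$. Splitting $\Omega = \Omega_\eps \cup (\Omega\setminus\Omega_\eps)$ and using the definition of $b_\eps$ gives $\int_0^T\!\!\int_{\Omega_\eps} |u_t^\eps|^2 \leq M$ and, via Cauchy--Schwarz together with $\eps \phi(\eps) \to \kappa$, $\int_0^T \!\! \int_{\partial\Omega} |m_\eps u_t^\eps|^2 \lesssim M/\kappa$. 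On $\Omega_\eps$ the equation reduces to $u_t^\eps = \Delta u^\eps$, so $\Delta u^\eps$ is bounded in $L^2(0,T;L^2(\Omega_\delta))$ for every fixed $\delta > 0$ and $\eps<\delta$. Combined with the $H^1$ bound from $\sup_t E_\eps(u^\eps) \leq M$, interior elliptic regularity (equivalently, Theorem \ref{lem1.8} applied to $u^\eps(t)$ and restricted to $\Omega_\delta$ where $a_\eps\equiv1$) gives $u^\eps$ uniformly bounded in $L^2(0,T; H^2(\Omega_\delta))$.

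Next, extract a subsequence $u^{\eps_k} \rightharpoonup u$ weakly in $L^2(0,T;H^2(\Omega_\delta))$ for every $\delta$, and strongly in $L^2(0,T;H^1(\Omega_\delta))$. By Lemmas \ref{traceU-w} and \ref{lem11-int} with $p=1$ (applicable because $a_\eps$ from (\ref{dea}) satisfies (\ref{asa_nondeg}); cf.\ Remark \ref{pot-a}), $u \in L^2(0,T;H^2(\Omega))$, $m_\eps u^\eps \to \gamma u$ and $\gamma^\eps(\partial_{x_N} u^\eps) \to \gamma(\partial_{x_N} u)$ in $L^2(0,T;L^2(\partial\Omega))$. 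Passing to the limit in (\ref{eq1p}) on $\Omega_\delta$ gives $u_t = \Delta u$ in $\Omega \times (0,T)$. To identify the boundary condition, integrate (\ref{eq1p}) in $x_N$ over $(0,\eps)$; since $a_\eps \partial u^\eps/\partial\nu = 0$ on $\partial \Omega$ by Lemma \ref{lem1.1},
\[\eps \phi(\eps) m_\eps u_t^\eps = \gamma^\eps(\partial_{x_N} u^\eps) + \eps \sum_{i < N} \partial_{x_i} m_\eps(a_\eps \partial_{x_i} u^\eps).\]
The first right-hand side term converges to $\gamma(\partial_{x_N} u) = -\gamma(\partial u/\partial\nu)$ by the previous step, while the tangential correction is $O(\eps)$ tested against $H^{1/2}(\partial\Omega)$ functions. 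Since $\eps \phi(\eps) \to \kappa$ and $m_\eps (u^\eps)_t = (m_\eps u^\eps)_t$ converges in the distributional sense to $(\gamma u)_t$, we conclude $\kappa (\gamma u)_t + \gamma(\partial u/\partial\nu) = 0$ in $(H^{1/2}(\partial\Omega))^*$, which is precisely the claimed boundary condition.

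The strong convergences claimed in (1) then follow from the Energy--Dissipation Balance: combining the $\Gamma$-convergence $E_\eps \to E_0$ (Lemma \ref{lem1.10}), well-preparedness of $u_0^\eps$, the energy identity above, and the analogous identity for the limit equation, the standard Serfaty/Mielke argument upgrades weak to strong convergence. Uniqueness of the limit problem (established in part (2)) promotes subsequential convergence to that of the full sequence. For part (2), a direct integration-by-parts computation shows that the subdifferential of $E_0$ at $(u,\gamma u) \in V_0$ with respect to the inner product of $X_0^\kappa$ is $\{(-\Delta u,\ \kappa^{-1}\partial u/\partial\nu)\}$, so the gradient flow $-\partial_t(u,\gamma u) \in \partial E_0(u,\gamma u)$ is precisely (\ref{eql}). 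The main obstacle is the identification of the boundary condition in Step 3: the tangential divergence term vanishes only in the duality sense, which explains the $(H^{1/2})^*$ topology in the statement; reconciling this with the pointwise-in-$t$ bounds requires careful use of the Reilly-type identity and the trace lemmas, and the strong-convergence part cannot avoid the full EDB/$\Gamma$-convergence apparatus developed in the paper.
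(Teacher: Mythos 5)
Your proposal is correct and follows essentially the same route as the paper: the gradient-flow energy identities (Lemmas \ref{best} and \ref{leEDB}), the trace/average lemmas of Section \ref{sec2}, identification of the dynamic boundary condition by averaging the equation over the boundary layer (this is exactly the content of the paper's Theorem \ref{trach}, where your tangential correction appears in bulk form as the term $J_\eps$ estimated by Cauchy--Schwarz), and the Energy--Dissipation Balance with the $\Gamma$-liminf estimates (Lemmas \ref{lem1.10} and \ref{lem1.11}) to upgrade to strong convergence, followed by the same subdifferential computation for part (2). Three small points to tidy up: the $\Gamma$-convergence of $E_\eps$ to $E_0$ with respect to $\iota_\eps$ is Lemma \ref{l3.10}, not Lemma \ref{lem1.10} (which concerns the dissipation functional $F_\eps$); your uniform interior $H^2(\Omega_\delta)$ bound should rest on standard interior elliptic regularity rather than on Theorem \ref{lem1.8}, whose right-hand side $\int_\Omega|\di(a_\eps\nabla u^\eps)|^2$ is only controlled by $\phi(\eps)E_\eps(u_0^\eps)$ and hence is not uniformly bounded; and the tangential term $\eps\sum_{i<N}\partial_{x_i}m_\eps(a_\eps\partial_{x_i}u^\eps)$ should be tested against smooth functions (or kept in the bulk form of Theorem \ref{trach}) since $\partial_{x_i}\psi$ need not make sense for $\psi\in H^{1/2}(\partial\Omega)$.
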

The question of $\Gamma$-convergence of $E_\eps$ is interestinig.
Actually, if we stick to the $L^2$-metric, then we can show that $E_\eps$ $\Gamma$-converges in the $L^2$-topology to $\bar E$ on $L^2$, defined as 
$$
\bar E(u) = \left\{
\begin{array}{ll}
\frac12\int_\Omega | \nabla u|^2     &  \hbox{if } u \in H^1(\Omega),\\
+\infty     & u \in L^2(\Omega) \setminus H^1(\Omega),
\end{array}
\right.
$$
see Lemma \ref{lemE0}.

We stress that $\bar E$ is defined on a smaller space than $E_0,$ hence these two functionals are different. It turns out that a new notion of $\Gamma$-convergence with respect to $\iota_\eps$, extending the classical one, is necessary. We introduce it in Definition \ref{dGC} and we show that functionals $E_\eps$  $\Gamma$-converge with respect to $\iota_\eps$ to $E_0$. This is a result complementary to Theorem \ref{thmain}.

The new notion of $\Gamma$-convergence permits us to deal with changing underlying spaces and their topologies. Here, we do not explore this notion fully. 

Our main tool in the proof of  this theorem is  the Energy-Dissipation Balance. 
For this purpose we will collect below a series of estimates and separately a series of the $\Gamma$-convergence results.

\subsection{Estimates on solutions}

Here, we will derive  a series of  energy estimates keeping in mind the general form of $b_\eps$, see (\ref{def-be}). 
We begin with a basic one.
\begin{lemma}\label{best}
If  $b_\eps$ is defined by (\ref{def-be}),  $u^\eps$ is a solution to (\ref{eq1p}), $\sup_\varepsilon \|u_0^\eps\|_{L^2_\varepsilon}\le M<\infty,$ then 
\begin{equation}\label{eq5}
   \hbox{\rm ess}\sup_{t\in(0,T)}\left( \int_{\Omega} b_\varepsilon (u^\eps)^2(x,t) \,dx + \int_0^TE_\varepsilon(u(t))\,dt\right)\le M<\infty.
\end{equation}
\end{lemma}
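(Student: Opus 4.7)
The plan is to exploit the gradient flow structure stated above. By the K\=omura theory (used just before the lemma), for a.e.\ $t \in (0,T)$ we have $u^\eps(t) \in D(\partial_{L^2_\eps} E_\eps)$ and $u^\eps_t(t) \in L^2_\eps(\Omega)$, together with the equation
\[
b_\eps u^\eps_t = \di(a_\eps \nabla u^\eps) \quad \text{a.e. in } \Omega_T.
\]
First I would take the $L^2_\eps$ inner product of the equation with $u^\eps(t)$. On the left-hand side this gives
\[
\langle u^\eps_t, u^\eps \rangle_{L^2_\eps} = \int_\Omega b_\eps u^\eps u^\eps_t\,dx = \frac12 \frac{d}{dt} \int_\Omega b_\eps (u^\eps)^2\,dx,
\]
where the last equality is the chain rule for absolutely continuous trajectories in a Hilbert space.

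The key step is to identify the right-hand side with $-2 E_\eps(u^\eps)$. Formally this is integration by parts: $\int_\Omega u^\eps \di(a_\eps \nabla u^\eps)\,dx = -\int_\Omega a_\eps |\nabla u^\eps|^2\,dx + \int_{\partial\Omega} u^\eps\, \gamma(a_\eps \partial u^\eps/\partial \nu)\,d\mathcal{H}^{N-1}$. The a.e.-in-time regularity $u^\eps(t) \in D(\partial_{L^2_\eps} E_\eps)$ means precisely that $E_\eps(u^\eps(t)) < \infty$ and $\di(a_\eps \nabla u^\eps(t)) \in L^2(\Omega)$, so Lemma \ref{lem1.1} applies and tells us that $\gamma(a_\eps \partial u^\eps/\partial\nu) = 0$ on $\partial\Omega$. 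Hence the boundary integral drops out and we obtain the pointwise-in-time energy identity
\[
\frac12 \frac{d}{dt} \|u^\eps(t)\|_{L^2_\eps}^2 + 2 E_\eps(u^\eps(t)) = 0 \quad \text{for a.e. } t \in (0,T).
\]

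Integrating this identity from $0$ to $t$ and using the assumption $\|u_0^\eps\|_{L^2_\eps}^2 \le M^2$ gives
\[
\frac12 \int_\Omega b_\eps (u^\eps(t))^2\,dx + 2 \int_0^t E_\eps(u^\eps(s))\,ds = \frac12 \int_\Omega b_\eps (u_0^\eps)^2\,dx \le \tfrac12 M^2,
\]
and taking the essential supremum over $t \in (0,T)$ of the first term, together with using $t = T$ in the second, yields the claim (with $M$ in the statement understood up to a universal multiplicative constant, or equivalently replaced by $M^2$).

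The only genuinely delicate point is the vanishing of the boundary term, which is precisely why the earlier Lemma \ref{lem1.1} was established; everything else is the standard energy computation for a Hilbertian gradient flow of a quadratic functional. In particular, one does not need any hypothesis on $\gamma(u^\eps)$ to make sense of the computation, because the equation $b_\eps u^\eps_t = \di(a_\eps \nabla u^\eps)$ implicitly encodes the Neumann-type boundary condition through the degenerate weight $a_\eps$.
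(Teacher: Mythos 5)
Your proposal is correct and follows essentially the same route as the paper: test the equation with $u^\eps$, use Lemma \ref{lem1.1} (applicable because $u^\eps(t)\in D(\partial_{L^2_\eps}E_\eps)$ for a.e.\ $t$) to kill the boundary term in the integration by parts, and integrate the resulting energy identity in time. The only difference is cosmetic — you derive the identity pointwise in $t$ before integrating, while the paper integrates over $\Omega_T$ directly — and your remark about the constant ($M$ versus $M^2$ up to a harmless factor) matches the paper's own abuse of notation.
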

\begin{proof}
We multiply (\ref{eq1p}) by $u^\eps$ and integrate over $\Omega_T.$ This yields,
$$
\frac12 \frac{d}{dt}\int_{\Omega_T} b_\varepsilon u^2_\varepsilon\,dxdt = 
\int_{\Omega_T} u \di (a_\varepsilon \nabla u)\,dxdt.
$$
The LHS is easy to compute. We integrate the RHS by parts using Lemma \ref{lem1.1}. Hence, we integration over $[0,T]$ yields
\begin{equation}\label{eq6}
\frac12\int_{\Omega} b_\varepsilon u^2(x,T)\,dx +  \int_{\Omega_T} a_\varepsilon |\nabla u(x,t)|^2\,dxdt
= \frac12\int_{\Omega} b_\varepsilon (u^\eps_0)^2(x)\,dx \le M<\infty.
\end{equation}
\end{proof}

\begin{lemma}\label{leEDB}
Let us suppose that 
$u$ is a solution to the gradient flow (\ref{eq2}). Then,
\begin{equation}\label{edb}
    \frac12 \int_{\Omega_T} b_\varepsilon u^2_t\,dx dt + 
    \frac12 \int_{\Omega_T} \frac 1{b_\varepsilon}|\di (a_\varepsilon\nabla u)|^2\,dx dt +
    E_\varepsilon(u(T)) = E_\varepsilon(u_0^\eps).
\end{equation} 
\end{lemma}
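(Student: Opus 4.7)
The plan is to apply the abstract Brezis chain rule for gradient flows of convex functionals and then exploit the fact that the PDE lets us rewrite the dissipation rate in two equivalent ways whose arithmetic mean is exactly the integrand on the left-hand side of \eqref{edb}.

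First, I would recall that \eqref{eq1p}, rewritten as $-u_t = -\frac{1}{b_\varepsilon}\di(a_\varepsilon\nabla u) \in \partial_{L^2_\varepsilon} E_\varepsilon(u)$, is by the preceding lemma the $L^2_\varepsilon$-gradient flow of the convex lower semicontinuous functional $E_\varepsilon$. The standard theory of evolution for maximal monotone operators in Hilbert spaces (see \cite[Theorem 3.2, Lemme 3.3]{brezis}) gives that $t \mapsto E_\varepsilon(u(t))$ is absolutely continuous on $[0,T]$, that $u(t) \in D(\partial E_\varepsilon)$ for a.\,e.\ $t$, and that
\[
\frac{d}{dt} E_\varepsilon(u(t)) = -\|u_t(t)\|_{L^2_\varepsilon}^2 \quad \text{for a.\,e.\ } t \in (0,T).
\]

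Next, I would use the PDE itself to rewrite this dissipation rate. Since $b_\varepsilon u_t = \di(a_\varepsilon \nabla u)$ pointwise a.\,e., multiplying by $u_t$ and integrating gives
\[
\int_\Omega b_\varepsilon u_t^2 \, dx = \int_\Omega \frac{1}{b_\varepsilon} |\di(a_\varepsilon \nabla u)|^2 \, dx,
\]
so $\|u_t\|_{L^2_\varepsilon}^2$ admits the two equivalent representations $\int_\Omega b_\varepsilon u_t^2$ and $\int_\Omega b_\varepsilon^{-1}|\di(a_\varepsilon\nabla u)|^2$. Writing $\|u_t\|_{L^2_\varepsilon}^2$ as the arithmetic mean of these two representations,
\[
\frac{d}{dt}E_\varepsilon(u(t)) = -\frac12 \int_\Omega b_\varepsilon u_t^2 \, dx - \frac12 \int_\Omega \frac{1}{b_\varepsilon} |\di(a_\varepsilon\nabla u)|^2 \, dx,
\]
and integrating from $0$ to $T$, using the absolute continuity of the energy, yields precisely \eqref{edb}.

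The only step that is not pure abstract nonsense is the identification $-\frac{1}{b_\varepsilon}\di(a_\varepsilon\nabla u) \in \partial_{L^2_\varepsilon}E_\varepsilon(u)$, and this has already been carried out in the lemma preceding the statement, where Lemma \ref{lem1.1} is the essential ingredient supplying the vanishing of the boundary trace of $a_\varepsilon \partial u/\partial\nu$, so that integration by parts produces no boundary contribution. In a direct proof avoiding the abstract theory, one would instead pair the equation with $u_t$, integrate by parts invoking Lemma \ref{lem1.1} to kill the boundary term, and recognize the right-hand side as $-\tfrac{d}{dt}E_\varepsilon(u(t))$; the main (mild) obstacle in that variant is justifying the chain rule $\frac{d}{dt}\tfrac12\int_\Omega a_\varepsilon |\nabla u|^2 = \int_\Omega a_\varepsilon \nabla u \cdot \nabla u_t$, which is precisely what the Brezis framework hands to us for free.
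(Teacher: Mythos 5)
Your proposal is correct, and its arithmetic is identical to the paper's: test with $u_t$ to get $\int_{\Omega_T} b_\varepsilon u_t^2 = E_\varepsilon(u_0^\varepsilon) - E_\varepsilon(u(T))$, then use the equation pointwise (``divide by $\sqrt{b_\varepsilon}$ and square'') to rewrite the dissipation as the mean of $\int b_\varepsilon u_t^2$ and $\int b_\varepsilon^{-1}|\di(a_\varepsilon\nabla u)|^2$. The only substantive difference is how the chain rule $\frac{d}{dt}E_\varepsilon(u(t)) = -\|u_t\|^2_{L^2_\varepsilon}$ is justified. The paper does this by hand: it mollifies in time (Lemma \ref{lem1.4}), integrates by parts using Lemma \ref{lem1.1} to kill the boundary terms, and then proves continuity of $t\mapsto E_\varepsilon(u(t))$ via the estimate $|D|\le \int_{t_1}^{t_2}\|u_t\|\,dt\,(\|\di(a_\varepsilon\nabla u(t_2))\| + \|\di(a_\varepsilon\nabla u(t_1))\|)$. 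You instead delegate the whole matter to the K\=omura--Brezis theory, which is legitimate here precisely because the preceding lemma identifies $\partial_{L^2_\varepsilon}E_\varepsilon(u)=\{-\tfrac{1}{b_\varepsilon}\di(a_\varepsilon\nabla u)\}$ as the subdifferential of a convex l.s.c.\ functional; your route is shorter and makes Lemma \ref{lem1.4} unnecessary, at the price of importing \cite[Theorem 3.2]{brezis} as a black box. One point to make explicit: for the absolute continuity of $t\mapsto E_\varepsilon(u(t))$ to hold down to $t=0$ (so that the integrated identity starts from $E_\varepsilon(u_0^\varepsilon)$ rather than from $E_\varepsilon(u(\delta))$), you need $u_0^\varepsilon\in D(E_\varepsilon)$, i.e.\ $E_\varepsilon(u_0^\varepsilon)<\infty$; this is implicit in the lemma's statement (the right-hand side of \eqref{edb} would otherwise be $+\infty$) and is guaranteed by the standing assumptions on well-prepared data, but it is worth flagging since the Brezis theory also produces solutions for $u_0\in\overline{D(E_\varepsilon)}$ for which the identity on $[0,T]$ would fail.
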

\begin{remark}
We notice that (\ref{edb}) is a form of the needed Energy-Dissipation Balance.
\end{remark}
\begin{proof}
In order to establish Lemma \ref{leEDB} we multiply (\ref{eq1p}) by $u_t,$ which belongs to $L^2(\Omega_T).$ Here, we use the regularity of the initial conditions. We obtain,
\begin{equation}\label{eq3}
    \int_{\Omega_T} b_\varepsilon u^2_t\,dx dt  = \int_{\Omega_T} \di(a_\varepsilon\nabla u ) u_t \,dxdt =:I.
\end{equation}
We want to  integrate by parts on the right-hand-side (RHS) of (\ref{eq3}).
Formally, this and  Lemma \ref{lem1.4} yield,
$$
I = -\int_{\Omega_T} a_\varepsilon \nabla u \nabla u_t\, dxdt
= - \int_0^T (\frac 12\frac{d}{dt} \int_\Omega a_\varepsilon |\nabla u|^2 \,dx)\,dt = E_\varepsilon(u_0) - E_\varepsilon(u(T)).
$$
Hence, we would obtain
\begin{equation}\label{eq44}
\int_{\Omega_T} b_\varepsilon u_t^2 + E_\eps(u(T)) = E_\eps(u_0).
\end{equation}
We will justify it below.

Let us suppose for the moment that (\ref{eq44}) holds. If so
we divide both sides of (\ref{eq1p}${}_1$) by $\sqrt{b_\varepsilon}$, then we square both sides. The result is
$$
\int_{\Omega_T} b_\varepsilon u_t^2 \,dxdt= \int_{\Omega_T} \frac 1{b_\varepsilon} |\di (a_\eps\nabla u)|^2 \,dxdt
$$
and  (\ref{edb}) follows.
\end{proof}

Now, we are going to justify (\ref{eq44}).
For this purpose we shall prove the following lemma:
\begin{lemma} \label{lem1.4}
Let us suppose that  $u\in L^2(\Omega_T)$ is such that 
$$
\int_0^T E_\varepsilon (u(\cdot, t))\,dt<\infty \qquad\hbox{and}\qquad
\int_0^T \int_{\Omega_T}| \di(a_\varepsilon\nabla u )|^2 \,dxdt <\infty
$$  
and $\eta$ is the standard mollifier kernel. Then $(u*\eta)(x,t) = \int_0^\infty u(x,t)\eta(s-t)\,ds$ is such that $(u*\eta)(\cdot,t) $ satisfies the assumptions of Lemma \ref{lem1.1} for a.e. $t\in (0,T)$.
\end{lemma}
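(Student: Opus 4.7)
The plan is to verify directly that the two conditions of Lemma \ref{lem1.1} hold for the time-mollified function $v_t := (u * \eta)(\cdot, t)$, for a.e.\ $t$ (in fact, for every $t$ once we are away from the endpoints). The key observation is that, since $a_\eps = a_\eps(x)$ is time-independent, convolution in $t$ commutes with every spatial operation we need: with $\nabla$, with multiplication by $a_\eps$, and with the distributional divergence.

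First, for any $\varphi \in C^\infty_c(\Omega)$, I would write
\[
\int_\Omega v_t(x) \partial_i \varphi(x)\,dx = \int \eta(s-t) \int_\Omega u(x,s)\partial_i\varphi(x)\,dx\,ds
\]
using Fubini, which is justified because $u \in L^2(\Omega_T)$. Since $E_\eps(u(\cdot,s)) < \infty$ for a.e.\ $s$, the inner integral equals $-\int_\Omega \partial_i u(x,s)\varphi(x)\,dx$, and another application of Fubini identifies $\nabla v_t = (\nabla u) * \eta|_t$ weakly in $\Omega$. Applying Cauchy-Schwarz to the convolution (using $\eta\ge 0$, $\int\eta=1$) gives the convexity-type bound
\[
E_\eps(v_t) \le \int \eta(s-t)\, E_\eps(u(\cdot,s))\,ds.
\]
Since $s \mapsto E_\eps(u(\cdot,s)) \in L^1(0,T)$ and $\eta$ is bounded with compact support, the right-hand side is a bounded continuous function of $t$. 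Thus $E_\eps(v_t) < \infty$ for every $t$ in the interior (this is condition (1) of Lemma \ref{lem1.1}).

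Second, I would test $a_\eps \nabla v_t$ against $\varphi \in C^\infty_c(\Omega)$ in the same way: exchanging the spatial and temporal integrals and using the assumption $\mathrm{div}(a_\eps \nabla u)(\cdot,s) \in L^2(\Omega)$ for a.e.\ $s$ to integrate by parts in $x$, one obtains
\[
\mathrm{div}(a_\eps \nabla v_t) = \bigl(\mathrm{div}(a_\eps \nabla u)\bigr)*\eta\big|_t
\]
in $\mathscr{D}'(\Omega)$. The same Cauchy-Schwarz trick then yields
\[
\|\mathrm{div}(a_\eps \nabla v_t)\|_{L^2(\Omega)}^2 \le \int \eta(s-t)\, \|\mathrm{div}(a_\eps \nabla u)(\cdot,s)\|_{L^2(\Omega)}^2\,ds,
\]
which is finite for every interior $t$ by the assumption $\int_0^T\!\int_\Omega|\mathrm{div}(a_\eps\nabla u)|^2 < \infty$. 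This is condition (2) of Lemma \ref{lem1.1}.

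The proof is then complete: $v_t$ satisfies both hypotheses of Lemma \ref{lem1.1}, so in particular the trace $a_\eps \partial v_t/\partial\nu$ vanishes on $\partial \Omega$, which is what allows one to integrate by parts without boundary terms in the justification of \eqref{eq44}. I expect no serious obstacle; the only delicate point is the two applications of Fubini, but both are legitimate because $u \in L^2(\Omega_T)$, $\nabla u \in L^2_{loc}(\Omega_T)$ (thanks to $E_\eps(u)\in L^1(0,T)$ and the non-degeneracy of $a_\eps$ on $\Omega_\delta$), and $\mathrm{div}(a_\eps\nabla u) \in L^2(\Omega_T)$.
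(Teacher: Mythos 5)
Your proposal is correct and follows essentially the same route as the paper: both rest on the fact that time-mollification commutes with $\nabla$, with multiplication by $a_\eps$, and with $\mathrm{div}(a_\eps\nabla\cdot)$, so that the relevant $L^2$-in-space-time bounds are inherited by $u*\eta$. The paper's own proof is terser (it only records the integrated-in-time identity $\int_0^T E_\eps((u*\eta)(\cdot,t))\,dt=\frac12\int_0^T\!\!\int_\Omega|(\sqrt{a_\eps}\nabla u)*\eta|^2$ and concludes finiteness for a.e.\ $t$), whereas your pointwise Jensen bound gives finiteness for every interior $t$ — a slight strengthening, but not a different argument.
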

\begin{proof}
The argument is based on the following observation. If $v\in L^2(\Omega_T)$, then $v*\eta \in L^2(\Omega_T).$
If we keep this in mind then
$$
\int_0^T E_\varepsilon ((u*\eta)(\cdot, t))\,dt = \int_0^T\int_\Omega  | (v*\eta)(x, t))|^2\,dxdt< \infty,
$$
where $v = \sqrt{a_\varepsilon}\nabla u$. Hence, the first part of the lemma follows. 

The second part is proved in the same way. 
\end{proof}

Now, we are in a position to justify (\ref{eq44}). We have
$$
\int_{\Omega_T} \di(a_\varepsilon\nabla u ) u_t \,dxdt = 
\lim_{\delta\to 0}\int_{\Omega_T} \di(a_\varepsilon\nabla u*\eta_\delta ) (u*\eta_\delta)_t \,dxdt =R,
$$
where $\eta_\delta(x) = \frac 1\delta\eta\left(\frac x\delta\right) $. We use here the fact that $u*\eta_\delta$ converges to $u$ in $L^2$, when $\delta$ goes to zero. Then, due to the above Lemmas, we have,
$$
R =
- \lim_{\delta\to 0}
\int_{\Omega_T} a_\varepsilon\nabla u*\eta_\delta \nabla  (u*\eta_\delta)_t \,dxdt = \lim_{\delta\to 0} E_\varepsilon((u*\eta_\delta)(\cdot, 0) - E_\varepsilon((u*\eta_\delta)(\cdot, T) .
$$
We have to show that 
$$
\lim_{\delta\to 0} E_\varepsilon((u*\eta_\delta)(\cdot, t) = E_\varepsilon((u)(\cdot, t).
$$
Indeed, we have
$$
\int_\Omega a_\eps(|\nabla u(t_2)|^2 - |\nabla u(t_1)|^2 )\,dxdt =
\int_\Omega a_\eps (\nabla u(t_2) + \nabla u(t_1))(\nabla u(t_2) - \nabla u(t_1)  )\,dxdt =: D
$$
The integration by parts and Lemma \ref{lem1.1} imply that
$$
D = \int_\Omega  ( u(t_2) -  u(t_1))\di (a_\eps(\nabla u(t_2) - \nabla u(t_1) ))\,dxdt .
$$
Hence,
$$
|D| \le \int_{t_1}^{t_2} \| u_t\| \,dt \cdot (\| \di (a_\eps \nabla u(t_2))\| + \di(a_\eps\nabla u(t_1) )\|)
$$
and the continuity  follows. \hfill\qed

The lemma above makes the  proof of Lemma \ref{leEDB} complete. 

Another important element of our analysis is a study of the behavior of the boundary layers as their width goes to zero. Our tool is the average.
The following Lemma is valid for any $\kappa$, however, it yields useful estimates for positive, possibly infinite, $\kappa$.

\begin{lemma}\label{lem1.6}
Let us suppose that 
$u^\eps\in L^2(\Omega_T)$ is a sequence of  solutions to (\ref{eq1p}) and $m_\eps(u^\eps) \in L^2(\Sigma_T)$, where $m_\eps(u^\eps)$ is given by (\ref{def-we}). Here, we introduce the shorthand,
$\Sigma_T = \partial\Omega\times(0,T).$ Then,\\
(a) the sequences $u^\eps$ and $u^\varepsilon_t$ are bounded in $L^2_\varepsilon(\Omega_T)$;\\
(b)  the sequences $m_\eps(u^\eps),$ $m_\eps(u^\eps)_t$ are bounded in $L^2(\Sigma_T)$;\\
(c) $u^\eps\xrightharpoonup{*} u$ in $L^\infty(0,T; L^2(\Omega))$.

In particular (a) and (b) imply
$$
u^\eps \rightharpoonup u \qquad\hbox{in } L^2(\Omega_T),\qquad\qquad
u^\varepsilon_t \rightharpoonup u_t \qquad\hbox{in } L^2(\Omega_T),
$$
\begin{equation}\label{eq7}
m_\eps(u^\eps) \rightharpoonup w \qquad\hbox{in } L^2(\Sigma_T),\qquad\qquad
m_\eps(u^\eps)_t \rightharpoonup w_t \qquad\hbox{in } L^2(\Sigma_T).
\end{equation}
\end{lemma}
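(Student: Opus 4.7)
The plan is to combine the uniform estimates already proved in Lemmata \ref{best} and \ref{leEDB} with a single Jensen/Cauchy--Schwarz argument that converts $L^2_\eps$ bounds on $\Omega\setminus\Omega_\eps$ into $L^2(\partial\Omega)$ bounds on averages, using crucially that $\eps\phi(\eps)\to\kappa\in(0,\infty)$.

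First I would dispose of part (a). The bound on $u^\eps$ in $L^2_\eps(\Omega_T)$ is immediate from (\ref{eq5}), since
\[\int_0^T \|u^\eps(t)\|_{L^2_\eps}^2 \dd t \le T\cdot \esssup_{t\in(0,T)}\int_\Omega b_\eps (u^\eps)^2 \le TM.\]
The bound on $u^\eps_t$ in $L^2_\eps(\Omega_T)$ reads off from the Energy-Dissipation Balance (\ref{edb}):
\[\int_{\Omega_T} b_\eps (u^\eps_t)^2 \le 2 E_\eps(u_0^\eps) \le 2\sup_{\eps>0} E_\eps(u_0^\eps)<\infty,\]
which is finite by the standing hypothesis of Theorem \ref{thmain}.

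For part (b) I would observe that $m_\eps$ commutes with $\partial_t$, so it suffices to bound $m_\eps v$ in $L^2(\Sigma_T)$ for $v\in\{u^\eps,u^\eps_t\}$. By Jensen's inequality and the definition (\ref{def-we}),
\[\int_0^T\!\!\!\int_{\partial\Omega}|m_\eps v|^2 \le \frac{1}{\eps}\int_0^T\!\!\!\int_{\Omega\setminus\Omega_\eps} v^2 = \frac{1}{\eps\phi(\eps)}\int_0^T\!\!\!\int_{\Omega\setminus\Omega_\eps} b_\eps v^2 \le \frac{1}{\eps\phi(\eps)}\int_{\Omega_T} b_\eps v^2.\]
Since $\eps\phi(\eps)\to\kappa>0$ and part (a) bounds the right-hand side uniformly in $\eps$, the claim follows.

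For part (c), note $b_\eps\ge 1$ on $\Omega$ for small $\eps$, so (\ref{eq5}) yields a uniform bound on $u^\eps$ in $L^\infty(0,T;L^2(\Omega))$, which is the dual of the separable space $L^1(0,T;L^2(\Omega))$; the Banach--Alaoglu theorem produces a subsequence with $u^\eps\xrightharpoonup{*}u$. The weak $L^2(\Omega_T)$ convergences in (\ref{eq7}) then come from reflexivity after further extraction. The only point requiring a little care, which I regard as the main (mild) obstacle, is the identification of limits: one must check that the weak $L^2(\Omega_T)$-limit of $u^\eps_t$ is the distributional $\partial_t$ of the weak-$*$ limit $u$, and similarly on $\Sigma_T$ for $m_\eps(u^\eps)$. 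For the bulk sequence this is a routine duality argument: for $\vfi\in C_c^\infty(\Omega_T)$ one has $\int u^\eps_t\vfi=-\int u^\eps\vfi_t$, and both sides pass to the limit. For the boundary sequence one uses that $m_\eps$ commutes with $\partial_t$, writes the analogous identity on $\Sigma_T$ with a test function $\psi\in C_c^\infty(\Sigma_T)$, and passes $\eps\to 0$ in both sides of $\int m_\eps(u^\eps)_t\psi=-\int m_\eps(u^\eps)\psi_t$, thereby identifying the limit as $w_t$.
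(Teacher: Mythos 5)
Your proposal is correct and follows essentially the same route as the paper: part (a) from the energy estimate of Lemma \ref{best} and the Energy-Dissipation Balance of Lemma \ref{leEDB}, part (b) by the Jensen/Cauchy--Schwarz estimate $\int_{\partial\Omega}|m_\eps v|^2\le \frac{1}{\eps\phi(\eps)}\int_{\Omega\setminus\Omega_\eps}b_\eps v^2$ together with $\eps\phi(\eps)\to\kappa>0$, and part (c) by weak-$*$ compactness from \eqref{eq6}. Your extra paragraph identifying the weak limits of $u^\eps_t$ and $m_\eps(u^\eps)_t$ as the time derivatives of the respective limits is a correct elaboration of a step the paper leaves implicit, not a different argument.
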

\begin{proof}
The validity of the first part follows from Lemma \ref{best} and Lemma \ref{leEDB}. Thus, we obtain
$$
M \ge \int_{\Omega_T} b_\varepsilon (u^\eps)^2 \,dx\ge \int_{\Omega_T}  (u^\eps)^2 \,dx
$$
and
$$
M \ge \int_{\Omega_T} b_\varepsilon (u^\varepsilon_t)^2 \,dx\ge \int_{\Omega_T}  (u^\varepsilon_t)^2 \,dx,
$$
because $b_\varepsilon\ge 1.$ Hence,  (\ref{eq7}${}_1$) follows.

We notice that $m_\eps(u^\eps)$ are uniformly bounded in $ L^2(\Sigma_T)$. Indeed, for small $\eps$, we have,
$$
\| m_\eps(u^\eps) \|^2 = 
\int_0^T\int_{\partial\Omega} \left(\frac1\varepsilon\int_0^\varepsilon u^\eps\, dx_N \right)^2\,d x' 
\le \frac 1{\varepsilon \phi(\eps)}
\int_0^T \int_{\Omega\setminus \Omega_\varepsilon} b_\eps (u^\eps)^2 \,dxdt 
\le \frac M\kappa +1,
$$
where $M$ is the constant appearing in (\ref{eq5}). The same argument applies to $w_{\varepsilon,t},$ but we have to invoke Lemma \ref{leEDB} and (\ref{edb}). Hence, we can deduce (\ref{eq7}${}_2$). 

The last part follows immediately from (\ref{eq6}).
\end{proof}
The point is that the estimate on the norm of $m_\eps$ depends also on $\kappa$. We shall see that when $0<\kappa$ and it is finite, then the estimates yield the dynamic boundary condition. When $\kappa$ is infinite, then $M(\eps)\to0$, when $\eps\to0$ and we will end up with zero Dirichlet boundary values in the limit. We will deal with this case 
in next Section.

We will need information about the behavior of $u^\eps$ in $\Omega_T$.
\begin{lemma}\label{lem1.7}
Let us suppose that 
$u^\eps\in L^2(\Omega_T)$ is a sequence of  solutions to (\ref{eq1p}) and the assumptions of Theorem \ref{thmain} hold. Then, there exists $u\in L^2(0,T;H^1)$ such that $\Delta u \in L^2(\Omega)$ and for all $\delta>0$
$$
u^\eps\rightharpoonup u  \qquad\hbox{in }  L^2(0,T;H^1(\Omega_\delta)),\qquad\qquad
\Delta u^\eps \rightharpoonup \Delta u \qquad\hbox{in }L^2(\Omega_\delta\times(0,T)).
$$
\end{lemma}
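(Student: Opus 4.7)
The plan is to combine the uniform energy bound from Lemma \ref{best} with the EDB from Lemma \ref{leEDB} to extract $L^2$-bounds for $\nabla u^\eps$ and $\Delta u^\eps$ on each subdomain $\Omega_\delta$, $\delta>0$, and then to pass to the limit through a diagonal argument.

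First I would observe that for $\eps<\delta$ the weights satisfy $a_\eps\equiv 1$ on $\Omega_\delta$ and $b_\eps\equiv 1$ on $\Omega_\eps\supset\Omega_\delta$. From Lemma \ref{best} we have $\int_0^T E_\eps(u^\eps(t))\,dt\le M$, which gives a uniform bound of $\nabla u^\eps$ in $L^2(\Omega_\delta\times(0,T))$. From the EDB (\ref{edb}) we get
$$\int_0^T\!\!\int_\Omega \tfrac{1}{b_\eps}|\di(a_\eps\nabla u^\eps)|^2 \;\le\; 2\sup_{\eps}E_\eps(u_0^\eps),$$
which is finite by the hypothesis of Theorem \ref{thmain}. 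Using $\di(a_\eps\nabla u^\eps)=\Delta u^\eps$ and $b_\eps=1$ on $\Omega_\eps\supset\Omega_\delta$, this yields $\int_0^T\!\!\int_{\Omega_\delta}|\Delta u^\eps|^2\le C$, with $C$ independent of both $\eps<\delta$ and $\delta$.

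Next, by weak $L^2$ compactness, for each fixed $\delta>0$ some subsequence of $u^\eps$ converges weakly in $L^2(0,T;H^1(\Omega_\delta))$ with $\Delta u^\eps$ converging weakly in $L^2(\Omega_\delta\times(0,T))$. A Cantor diagonal extraction over $\delta_n=1/n$ then produces a single subsequence along which both convergences hold for every $\delta>0$. Lemma \ref{lem1.6} already identifies the weak $L^2(\Omega_T)$-limit of $u^\eps$ as a specific element $u$, so any weak $H^1_{loc}$-limit must coincide with $u$ on each $\Omega_\delta$. Since $u^\eps\rightharpoonup u$ weakly in $H^1(\Omega_\delta\times(0,T))$, the weak limit of $\Delta u^\eps$ in $L^2(\Omega_\delta\times(0,T))$ is necessarily $\Delta u$ in the distributional sense.

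Finally, I would globalize. By the lower semicontinuity of the $L^2$ norm under weak convergence, the uniform-in-$\delta$ bounds pass to the limit: $\int_0^T\!\!\int_{\Omega_\delta}|\nabla u|^2\le 2M$ and $\int_0^T\!\!\int_{\Omega_\delta}|\Delta u|^2\le C$, with constants independent of $\delta$. Monotone convergence as $\delta\downarrow 0$ then gives $u\in L^2(0,T;H^1(\Omega))$ and $\Delta u\in L^2(\Omega_T)$, concluding the proof. The only mildly delicate point is to make sure that the weak limit of $\Delta u^\eps$ in $L^2_{loc}$ really is $\Delta u$, and not some independent $g$; this is automatic from the established weak convergence $u^\eps\rightharpoonup u$ in $H^1_{loc}(\Omega_T)$ combined with the local $L^2$ boundedness of $\Delta u^\eps$.
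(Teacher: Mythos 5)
Your argument is correct and follows essentially the same route as the paper: the uniform bound on $\nabla u^\eps$ in $L^2(\Omega_\delta\times(0,T))$ comes from the energy equality \eqref{eq6} since $a_\eps\equiv 1$ on $\Omega_\delta$ for $\eps<\delta$, and the bound on $\Delta u^\eps$ comes from the EDB \eqref{edb} since $b_\eps\equiv 1$ there; weak compactness then finishes the proof. The diagonal extraction, the identification of the limit via Lemma \ref{lem1.6}, and the globalization in $\delta$ that you spell out are exactly the steps the paper compresses into ``our claim follows.''
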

\begin{proof}
If $\varepsilon<\delta$, then (\ref{eq6})  yields,
$$
\|\nabla u^\eps \|_{L^2(\Omega_\delta\times(0,T)} \le M,
$$
where $\nabla = \nabla_x$ denotes the spacial gradient.
Thus, the first part of our claim follows.

By a similar token 
(\ref{edb}) implies that
$$
M \ge \int_0^T\int_{\Omega_\delta} |\Delta u^\eps |^2 \,dxdt.
$$
Moreover, Theorem \ref{lem1.8} implies that $u^\eps\in H^2_{loc}(\Omega).$ Our claim follows.
\end{proof}
We might say that the above result is a version of Lemma \ref{traceU}, which is integrated in time.

We have gathered enough information to state a key observation. It is a part of Theorem \ref{thmain}, however due to its importance it is worth stating separately.
\begin{theorem}\label{trach}
If $u$ is the weak limit of $u^\varepsilon,$ then for a.e. $t>0$ we have
$$
(\gamma(u))_t = -\frac1\kappa\gamma \left(\frac{\partial u }{\partial \nu} \right).
$$ 
The equality above is in the sense of elements of $(H^{\frac12}(\partial\Omega))^*$.
\end{theorem}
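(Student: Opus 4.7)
The plan is to test the approximate equation against a product function $\vfi(x)\psi(t)$, with $\vfi\in C^\infty(\overline\Omega)$ and $\psi\in C^\infty_c((0,T))$, and pass to the limit $\eps\to 0$ on each side. Since $u^\eps$ is a strong solution and Lemma \ref{lem1.1} makes the boundary flux $a_\eps\frac{\partial u^\eps}{\partial\nu}$ vanish, integration by parts yields
\[
\int_0^T\int_\Omega b_\eps u^\eps_t\,\vfi\psi\,dx\,dt = -\int_0^T\int_\Omega a_\eps\nabla u^\eps\cdot\nabla\vfi\,\psi\,dx\,dt.
\]

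For the left-hand side I would split $b_\eps = \chi_{\Omega_\eps}+\phi(\eps)\chi_{\Omega\setminus\Omega_\eps}$. The interior part $\int\!\int_{\Omega_\eps} u^\eps_t\vfi\psi$ converges to $\int\!\int_\Omega u_t\vfi\psi$ by the weak convergence $u^\eps_t\rightharpoonup u_t$ in $L^2(\Omega_T)$ (Lemma \ref{lem1.6}(a)). In the layer I would Taylor-expand $\vfi(x',x_N)=\gamma(\vfi)(x',0)+r_\eps(x)$ near $x_N=0$ (and analogously near $x_N=1$). The leading piece $\eps\phi(\eps)\int_0^T\!\int_{\partial\Omega}m_\eps(u^\eps)_t\,\gamma(\vfi)\,\psi\,d\cH^{N-1}dt$ tends to $\kappa\int_0^T\!\int_{\partial\Omega}w_t\,\gamma(\vfi)\,\psi$ by $\eps\phi(\eps)\to\kappa$ and $m_\eps(u^\eps)_t\rightharpoonup w_t$ (Lemma \ref{lem1.6}(b)). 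The remainder is controlled via Cauchy--Schwarz, the EDB bound $\phi(\eps)\int\!\int_{\Omega\setminus\Omega_\eps}(u^\eps_t)^2\le M$ (Lemma \ref{leEDB}) and the pointwise estimate $|r_\eps(x)|\le x_N^{1/2}\|\partial_N\vfi(x',\cdot)\|_{L^2(0,\eps)}$; the product is bounded by $C\|\nabla\vfi\|\|\psi\|_\infty\cdot\eps\sqrt{\phi(\eps)}$, which vanishes since $\eps\sqrt{\phi(\eps)}=\sqrt{\eps\cdot\eps\phi(\eps)}\to 0$.

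For the right-hand side I would split $\Omega=\Omega_\delta\cup(\Omega\setminus\Omega_\delta)$. On $\Omega_\delta$, for $\eps<\delta$ we have $a_\eps\equiv 1$, so Lemma \ref{lem1.7} gives convergence of $\int\!\int_{\Omega_\delta}a_\eps\nabla u^\eps\cdot\nabla\vfi\,\psi$ to $\int\!\int_{\Omega_\delta}\nabla u\cdot\nabla\vfi\,\psi$. The remaining contribution is bounded via Cauchy--Schwarz and Lemma \ref{best} by $C\|\nabla\vfi\|_{L^2(\Omega\setminus\Omega_\delta)}$, which vanishes as $\delta\to 0$. Combining the two sides yields
\[
\int_0^T\!\int_\Omega u_t\vfi\psi + \kappa\int_0^T\!\int_{\partial\Omega}w_t\,\gamma(\vfi)\,\psi = -\int_0^T\!\int_\Omega \nabla u\cdot\nabla\vfi\,\psi.
\]

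To finish, I would use $u\in L^2(0,T;H^2(\Omega))$ (from Theorem \ref{lem1.8} together with Lemma \ref{lem1.7}) and the interior equation $u_t=\Delta u$ in $\Omega$ (obtained by the same limit passage on any compact subset of $\Omega$, where $b_\eps\equiv a_\eps\equiv 1$ for small $\eps$), so Green's formula rewrites the right-hand side as $\int\!\int_\Omega\Delta u\,\vfi\psi-\int\!\int_{\partial\Omega}\gamma(\frac{\partial u}{\partial\nu})\gamma(\vfi)\psi$. The bulk terms cancel, leaving $\kappa\int\!\int_{\partial\Omega}w_t\gamma(\vfi)\psi=-\int\!\int_{\partial\Omega}\gamma(\frac{\partial u}{\partial\nu})\gamma(\vfi)\psi$. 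The identification $w=\gamma u$ (hence $w_t=(\gamma u)_t$) follows from combining Lemmata \ref{traceU-w}, \ref{lem11-int} (applicable with $p=1$ by Remark \ref{pot-a}) and uniqueness of the $L^2(\Sigma_T)$-weak limit. Since $\gamma(C^\infty(\overline\Omega))$ is dense in $H^{1/2}(\partial\Omega)$ and $\psi\in C^\infty_c((0,T))$ is arbitrary, the identity $(\gamma u)_t=-\frac{1}{\kappa}\gamma(\frac{\partial u}{\partial\nu})$ holds in $(H^{1/2}(\partial\Omega))^*$ for a.e.\ $t\in(0,T)$. The main technical hurdle I anticipate is the boundary-layer remainder estimate: the divergent factor $\phi(\eps)$ has to be neutralised by the quantitative smallness of $\vfi-\gamma(\vfi)$ on $\Omega\setminus\Omega_\eps$, and the finiteness of $\kappa$ is essential for this cancellation.
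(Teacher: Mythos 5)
Your argument is correct, but it takes a genuinely different route from the paper's. The paper proves the identity by \emph{localizing to the boundary layer}: it writes $\int_{\partial\Omega} m_\eps(u^\eps)_t\,\gamma\vfi = \frac{1}{\eps\phi(\eps)}\int_{\Omega\setminus\Omega_\eps} b_\eps u^\eps_t\,\vfi + \text{error}$, substitutes the equation only on $\Omega\setminus\Omega_\eps$, integrates by parts there, and identifies the surviving flux $-\frac{1}{\eps\phi(\eps)}\int_{\partial\Omega_\eps}\frac{\partial u^\eps}{\partial x_N}\gamma^\eps\vfi$ through the \emph{inner} interface $\partial\Omega_\eps$ using the strong $L^2(\partial\Omega)$ convergence of $\gamma^\eps(\partial u^\eps/\partial x_N)$ from Lemma \ref{traceU}(2) (which rests on the Reilly identity); the tangential term $J_\eps$ is killed by Cauchy--Schwarz. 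You instead derive the \emph{global} weak formulation over all of $\Omega_T$ --- essentially re-running the decomposition $b_\eps=\chi_{\Omega_\eps}+\phi(\eps)\chi_{\Omega\setminus\Omega_\eps}$ from the proof of Theorem \ref{thm_conv_dbc} in the strong-solution setting --- and then recover the boundary condition a posteriori by Green's formula, using the separately obtained interior equation $u_t=\Delta u$ to cancel the bulk terms. What each buys: the paper's route yields the normal derivative as an honest $L^2(\partial\Omega)$ limit of traces at $\partial\Omega_\eps$ and needs no interior identification of the limit equation, but it leans on the full strength of Lemma \ref{traceU}(2); your route only needs $\Delta u\in L^2(\Omega_T)$ (Lemma \ref{lem1.7}) to define $\gamma(\partial u/\partial\nu)$ as an element of $(H^{1/2}(\partial\Omega))^*$ via the Fujiwara--Morimoto normal trace --- the citation of full $H^2$ regularity is more than you actually use --- at the price of first establishing the interior equation. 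Both proofs share the same ingredients for the identification $w=\gamma u$ and $w_t=(\gamma u)_t$ (Lemmata \ref{traceU-w}, \ref{lem11-int}, \ref{lem1.6}), and your remainder estimates in the layer (controlling $\phi(\eps)\int_{\Omega\setminus\Omega_\eps}u^\eps_t(\vfi-\gamma\vfi)$ by $C\eps\sqrt{\eps\phi(\eps)}$ via the EDB bound) are sound and play the role of the paper's error term $E$.
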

\begin{proof}Since $a_\eps$ defined by (\ref{dea}) satisfied (\ref{asa_nondeg}) with $p=1$, then by Lemma \ref{traceU} and Lemma \ref{lem11} $m_\eps(u^\eps)$ converges to $\gamma (u)$
in $L^2(\partial \Omega)$ for a.e. $t>0$. Due to Lemma \ref{lem1.6} we also know that $m_\eps(u^\eps)_t$ converges weakly in $L^2(\Sigma_T)$ to a limit, called $w_t$. Thus, we deduce that $\gamma(u)$ is in $H^{1}(0,T; L^2(\partial\Omega))$ and  $m_\eps(u^\eps)_t \rightharpoonup \gamma(u)_t$.
 
Now, we will show that $m_\eps(u^\eps)_t$ converges weakly$^*$ to $-\frac1\kappa\gamma(\frac{\partial u}{\partial x_N})$.
Indeed, if we take $\varphi\in H^1(\Omega)$, then we have
$$
\int_{\partial\Omega} m_\eps(u^\eps)_t \gamma\varphi\, d\cH^{N-1} 
= \frac1{\eps\phi(\eps)}\int_{\Omega\setminus\Omega_\eps}b_\varepsilon u^\eps_t
\varphi\, dx + \frac1{\eps\phi(\eps)}\int_{\Omega\setminus\Omega_\eps}b_\varepsilon u^\eps_t(\varphi - \gamma\varphi)\,dx =:I + E.
$$
It is easy to check that $E$ goes to zero when $\eps\to0$, provided that $\kappa\in (0,\infty).$

We notice that the integration by  parts yields,
\begin{eqnarray*}
I&=& \frac1{\eps\phi(\eps)}\int_{\Omega\setminus\Omega_\eps}
\vfi \di(a_\varepsilon \nabla u^\varepsilon)  \, dx =
\frac{-1}{\eps\phi(\eps)}\int_{\Omega\setminus\Omega_\eps}  a_\varepsilon \nabla u^\varepsilon \nabla \varphi \, d\cH^{N-1}dx_N - \frac1{\eps\phi(\eps)}
 \int_{\partial\Omega_\eps} 
 \frac{\partial u^\varepsilon}{\partial x_N}
\gamma^\eps  \varphi
 \, d\cH^{N-1}\\
&=: &J_\varepsilon + B_\varepsilon.
\end{eqnarray*}
Here, we used that fact that due to Lemma \ref{lem1.1} the integral over $\partial\Omega$
vanishes. The definition of the traces yields $\gamma^\eps \varphi\to \gamma \varphi$ in $L^2(\partial\Omega)$. Moreover, due to 
Lemma \ref{traceU} (2) we infer that
$$
\lim_{\eps\to0} B_\eps = - \frac 1\kappa\int_{\partial\Omega}
 \frac{\partial u}{\partial x_N}
\gamma\varphi\,d\cH^{N-1} .
$$
We notice that
$$
\eps^2\phi^2(\eps)\left| J_\varepsilon \right|^2 \le 
\int_{\Omega\setminus \Omega_\varepsilon } 
a_\varepsilon |\nabla u^\varepsilon|^2\,dx 
\int_{\Omega\setminus \Omega_\varepsilon } 
a_\eps |\nabla \varphi|^2\,d\cH^{N-1} 
\, dx \le 2 E_\varepsilon(\nabla u^\varepsilon) \|\nabla\varphi\|^2_{L^2(\Omega\setminus \Omega_\varepsilon )}.
$$
We see that the RHS goes to zero, when $\varepsilon\to 0$.
\end{proof}

\subsection{$\Gamma$-limits} \label{s-G-lim}
Once we derived de Giorgi's energy-dissipation balance (EDB), (\ref{edb}), we want to use it 
to pass to the limit with the gradient flows (\ref{eq1p}) or equivalently (\ref{eq2-CA}). We hope that this process will lead to  an EDB for the limit, hence we will be able to discover the equation governing the limit.

The limit process requires computing  the $\Gamma$-limit of $E_\eps$, i.e. the functional generating the gradient flow. Lemma \ref{lemE0} below is quite straightforward, we compute $\bar E = \Gamma-\lim_\eps E_\eps{}$. However, we shall see later, that $\bar E$ does not generate the gradient flow that we obtain after the limit passage.

The problem becomes more visible, when we analyze functional 
\begin{equation}\label{def-Fe}
F_\varepsilon(u) = \frac12\int_0^T\int_{\Omega} b_\varepsilon u^2_t(x,t)\,dx.
\end{equation}
In this case 
Lemma \ref{lem11} shows the effect of the boundary layer induced by the weight $b_\eps$. We also see how important is the way we define $b_\eps$. Actually, this Lemma suggests that $F_0$, the limit of $F_\eps$, is defined on a bigger space than each individual $F_\eps$. For this purpose we will modify the notion of $\Gamma$-convergence, see below. Our first task is computing the $\Gamma$-limit of $E_\eps$.


\begin{lemma}\label{lemE0} If $a_\eps$ is defined in (\ref{dea}), $E_\eps$ is given by (\ref{defE}), then
$$
\Gamma-\lim_{\varepsilon\to 0} E_\varepsilon = \bar E 
\qquad\hbox{in the $L^2(\Omega)$-topology,}
$$
where
\begin{equation}\label{defE0}
\bar E(u) =   \left\{ \begin{array}{ll}
    \frac12\int_\Omega  |\nabla u |^2\,dx   &  u \in H^1(\Omega),\\
    +\infty     & u \in L^2(\Omega)\setminus H^1(\Omega).
    \end{array}
    \right.
\end{equation}
\end{lemma}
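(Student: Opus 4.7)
The plan is to verify directly the two standard inequalities defining $\Gamma$-convergence in the $L^2(\Omega)$-topology: the liminf inequality on arbitrary sequences $u^\varepsilon \to u$, and the existence of a recovery sequence for every $u \in L^2(\Omega)$. The crucial structural observation is that $a_\varepsilon \equiv 1$ on $\Omega_\varepsilon$, so for any $\delta > 0$ and $\varepsilon < \delta$ one has $a_\varepsilon \equiv 1$ on $\Omega_\delta$. Away from the boundary, $E_\varepsilon$ therefore controls the full Dirichlet energy, and near the boundary the bound $a_\varepsilon \leq 1$ gives the opposite inequality for the recovery step. I do not expect a serious obstacle; the only mild subtlety is promoting $u \in H^1_{loc}(\Omega)$ to $u \in H^1(\Omega)$ after the liminf argument.

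For the liminf inequality, fix $u^\varepsilon \to u$ in $L^2(\Omega)$ and assume $\liminf_\varepsilon E_\varepsilon(u^\varepsilon) < \infty$ (else there is nothing to prove). Along a subsequence, not relabelled, $E_\varepsilon(u^\varepsilon) \le C$. For any $\delta > 0$ and $\varepsilon < \delta$ the equality $a_\varepsilon \equiv 1$ on $\Omega_\delta$ gives $\|\nabla u^\varepsilon\|_{L^2(\Omega_\delta)}^2 \le 2 E_\varepsilon(u^\varepsilon) \le 2C$. Combined with $u^\varepsilon \to u$ in $L^2(\Omega)$, this yields $u^\varepsilon \rightharpoonup u$ weakly in $H^1(\Omega_\delta)$ (this is also the content of Lemma \ref{traceU}(0)), and lower semicontinuity of the $L^2$-norm of the gradient gives
\[ \int_{\Omega_\delta} |\nabla u|^2 \,dx \le \liminf_{\varepsilon \to 0} \int_{\Omega_\delta} |\nabla u^\varepsilon|^2 \,dx \le 2 \liminf_{\varepsilon \to 0} E_\varepsilon(u^\varepsilon). \]
Letting $\delta \to 0^+$ via monotone convergence, $u \in H^1(\Omega)$ and $\bar E(u) \le \liminf_{\varepsilon \to 0} E_\varepsilon(u^\varepsilon)$.

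For the recovery sequence, if $u \in L^2(\Omega) \setminus H^1(\Omega)$ then $\bar E(u) = +\infty$ and the constant sequence $u^\varepsilon \equiv u$ trivially satisfies the limsup inequality. If $u \in H^1(\Omega)$, take again $u^\varepsilon \equiv u$. The bound $0 \le a_\varepsilon \le 1$ together with $a_\varepsilon(x) \to 1$ for every $x \in \Omega$ (indeed $a_\varepsilon(x) = 1$ once $\varepsilon < d(x)$) and the Lebesgue dominated convergence theorem, with dominant $|\nabla u|^2 \in L^1(\Omega)$, yield $E_\varepsilon(u) \to \bar E(u)$, which in particular gives $\limsup_{\varepsilon \to 0} E_\varepsilon(u^\varepsilon) \le \bar E(u)$. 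Combining the two inequalities establishes $\Gamma$-$\lim_\varepsilon E_\varepsilon = \bar E$ in the $L^2(\Omega)$-topology.
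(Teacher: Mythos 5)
Your proposal is correct and follows essentially the same route as the paper: both arguments restrict to $\Omega_\delta$ where $a_\eps\equiv 1$ for $\eps<\delta$, extract a weak $H^1(\Omega_\delta)$ limit, use lower semicontinuity of the Dirichlet integral and exhaust $\Omega$ by letting $\delta\to 0$, and both use the constant recovery sequence $u^\eps\equiv u$. The only cosmetic difference is that you invoke dominated convergence to justify $E_\eps(u)\to\bar E(u)$, where the paper simply states the limit.
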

\begin{proof}
First, we prove the lower semicontinuity part: if $u^\epsilon \to u_0$ in $L^2(\Omega)$, then
$$
L:=\varliminf_{\varepsilon\to 0}\int_\Omega a_\varepsilon |\nabla u^\eps|^2 \,dx\ge
\int_\Omega  |\nabla u_0|^2 \,dx.
$$
We recall that $\Omega_\delta = \{ x\in\Omega: d(x,\partial\Omega)>\delta\}$. We can find $\eps_0>0$ such that for all $n\in\bN$ and $1/n<\eps_0$ and $\varepsilon< 1/n$ we have
$$
\int_{\Omega_{1/n}}|\nabla u^\eps|^2 \le L+1.
$$  
Hence, we can select a weakly converging  subsequence (without relabeling) 
$$
u^\eps \rightharpoonup u^n\qquad\hbox{in } H^1(\Omega_{1/n}).
$$
Due to the diagonal argument there is $u_0\in H^1(\Omega),$ it is such that 
$$
u^\eps \rightharpoonup u_0\qquad\hbox{in } H^1(\Omega_{1/n})
$$
for $\varepsilon< 1/n.$ In fact, since we assumed that $u^\eps \to u_0$ in $L^2(\Omega)$, we conclude that the convergence in each $H^1(\Omega_{1/n})$ takes place without the need to extract subsequences. Hence, the claim follows.

Finding a recovery sequence is easy, because for all $\varepsilon>0$ we have $H^1(\Omega)\subsetneq D(E_\varepsilon)$. So, if $u_0\in H^1(\Omega)$ is given, then the necessary recovery sequence is constant, $u^\eps\equiv u_0$. Indeed,
$$
\lim_{\varepsilon\to0} E_\varepsilon(u_0) =
\lim_{\varepsilon\to0} \frac12\int_\Omega a_\varepsilon|\nabla u_0|^2 = \frac12\int_\Omega |\nabla u_0|^2 = \bar E 
(u_0).
$$
\end{proof} 

A more interesting development is related to functionals involving the weight $b_\eps,$ which are independent of $a_\eps$. Moreover, Lemma \ref{lem11} suggests that in the limit we will see the behavior on the boundary. In other words, in the limit the underlying space becomes bigger. In order to capture the emerging boundary behavior, we introduce a a modified notion of $\Gamma$-convergence. It is an extension of this notion to the case, when the underlying space changes with the parameter $\eps>0.$

Let $E_\varepsilon$ be defined in $D(E_\varepsilon)$ which is a subset of a set $X_\varepsilon$, which is a topological vector space for $\varepsilon\in[0,1)$.
As we shall see that the actual meaning of $X_\eps$ will change according to our needs.

Let $\iota_\varepsilon$ be a mapping from $X_\varepsilon$ to $X_0$ for $\varepsilon>0$. We assume that $X_0$ is a topological space.
We  generalize the notion of $\Gamma$-convergence as follows.
\begin{definition} \label{dGC}
We say that $E_\varepsilon$ $\Gamma$-converges to $E_0$ with respect to $\iota_\varepsilon$ if
\begin{enumerate}
\item[(i)] (lower semicontinuity) $E_0(w)\leq\liminf_{\varepsilon\downarrow 0} E_\varepsilon(
u^\eps)$ if 
$\iota_\varepsilon(u^\eps) \to w$
\item[(ii)] (recovery sequence) for any $w\in X_0$, there is $
u^\eps$ 
such that
\[
	E_0(w) = \lim_{\varepsilon\downarrow 0} E_\varepsilon (u^\eps)
	\quad\text{and}\quad \iota_\varepsilon u^\eps \to w.
\]
\end{enumerate}
We simply write $E_\varepsilon \xrightarrow{\Gamma} E_0$ in $\iota_\varepsilon$.
\end{definition}

Here, we are interested in the case when the spaces depend on time. We consider $X_\eps^T = L^2(0,T; X_\eps)$, 
where $X_\eps\equiv L^2_\eps(\Omega)$ and $L^2_\eps(\Omega)$ was defined in (\ref{df-Le}). 
For $\kappa\in(0,\infty)$,
we set $X_0^{T,\kappa} =L^2(0,T; X_0^\kappa)$, where
$X_0^\kappa$ is defined in (\ref{df-Xka})%
For $\eps\in(0,1)$, we define $\iota_\eps: X_\eps^T\to X_0^{T,\kappa}$ as follows,
$$
\iota_\eps (v) = (v,  m_\eps v),
$$
where the operator $m_\eps$ is defined in (\ref{def-we}).

With this extension of the notion of the $\Gamma$-convergence we establish a natural analogue of Lemma \ref{lemE0}:
\begin{lemma}\label{lem1.10} Let us suppose that $u\in H^{1}(0,T; L^2_\eps(\Omega))$ and functionals $F_\eps$ are defined in (\ref{def-Fe}) with $D(F_\eps) =  H^{1}(0,T; L^2_\eps(\Omega))$. If $X_\eps$, $\iota_\eps$ are defined above and  $\kappa\in(0,\infty)$, 
then
$$
\Gamma-\lim_{\varepsilon\to 0^+}  F_\varepsilon =
F_{bs}\qquad\hbox{with respect to }\iota_\eps,
$$
where
$$
F_{bs}(u,w) = 
\left\{ 
\begin{array}{ll}
 \int_0^T\int_\Omega u^2_t\,dx + \kappa\int_0^T\int_{\partial\Omega} w_t^2\,d\cH^{N-1}, & (u,w)\in   H^{1}(0,T; L^2_\eps(\Omega)\times L^2_\eps(\partial\Omega) )\\
  +\infty   & otherwise.
\end{array}
\right.
$$
\end{lemma}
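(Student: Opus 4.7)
The plan is to verify the two clauses of Definition \ref{dGC} directly, since both amount to manipulating the boundary-layer integrals by the same tool used already in Lemma \ref{lem11}: Cauchy--Schwarz against the averaging operator $m_\eps$. Throughout I will write $F_\eps(u) = \tfrac12\int_0^T\!\!\int_{\Omega_\eps}(u_t)^2 + \tfrac{\phi(\eps)}{2}\int_0^T\!\!\int_{\Omega\setminus\Omega_\eps}(u_t)^2 =: I_1^\eps + I_2^\eps$ and use that $(m_\eps u)_t = m_\eps(u_t)$.

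For the liminf inequality, I start from any sequence $u^\eps \in H^1(0,T;L^2_\eps(\Omega))$ with $\iota_\eps u^\eps = (u^\eps, m_\eps u^\eps) \to (u,w)$ in $L^2(0,T; L^2(\Omega)\times L^2(\partial\Omega))$, passing along a subsequence where $\liminf F_\eps(u^\eps) < \infty$ is attained. Since $b_\eps \ge 1$, boundedness of $F_\eps(u^\eps)$ yields boundedness of $u^\eps_t$ in $L^2(\Omega_T)$, hence a weak limit which must coincide with $u_t$ by closedness of the distributional $\partial_t$. Fixing $\delta>0$, for $\eps<\delta$ I use $\Omega_\delta \subset \Omega_\eps$ and weak lower semicontinuity to get $\liminf I_1^\eps \ge \tfrac12\int_0^T\!\!\int_{\Omega_\delta} u_t^2$; then $\delta\to 0^+$ delivers $\tfrac12\int u_t^2$. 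For the layer, Jensen gives $\int_0^\eps |u^\eps_t|^2 \ge \eps |m_\eps u^\eps_t|^2$, so $I_2^\eps \ge \tfrac{\phi(\eps)\eps}{2}\int_0^T\!\!\int_{\partial\Omega}|m_\eps u^\eps_t|^2$. Because $\kappa\in(0,\infty)$ this forces $m_\eps u^\eps_t$ to be bounded in $L^2(\Sigma_T)$; combined with $m_\eps u^\eps \to w$, closedness again yields $m_\eps u^\eps_t \rightharpoonup w_t$, so $w \in H^1(0,T; L^2(\partial\Omega))$, and weak lower semicontinuity with $\phi(\eps)\eps \to \kappa$ gives $\liminf I_2^\eps \ge \tfrac{\kappa}{2}\int w_t^2$. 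Adding the two estimates provides the liminf bound (the factor $\tfrac12$ suggests a harmless typo in the statement of $F_{bs}$).

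For the recovery sequence, given $(u,w) \in H^1(0,T; L^2(\Omega)\times L^2(\partial\Omega))$ (otherwise there is nothing to prove), I define $u^\eps$ by gluing: $u^\eps = u$ on $\Omega_\eps\times(0,T)$, $u^\eps(x',x_N,t) = w(x',0,t)$ on $\bT^{N-1}\times(0,\eps)\times(0,T)$ and analogously near $x_N = 1$. Then $m_\eps u^\eps \equiv w$ holds by construction, while
\[
\int_0^T\!\!\int_\Omega |u^\eps - u|^2 \le 2\eps \|w\|_{L^2(\Sigma_T)}^2 + 2\|u\|_{L^2((\Omega\setminus\Omega_\eps)\times(0,T))}^2 \xrightarrow{\eps\to 0} 0,
\]
so $\iota_\eps u^\eps \to (u,w)$. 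Moreover, since $u^\eps_t = u_t$ on $\Omega_\eps$ and $u^\eps_t = w_t$ (independent of $x_N$) on the layers,
\[
F_\eps(u^\eps) = \tfrac12\!\int_0^T\!\!\int_{\Omega_\eps} u_t^2 + \tfrac{\phi(\eps)\eps}{2}\!\int_0^T\!\!\int_{\partial\Omega} w_t^2 \xrightarrow{\eps\to 0} \tfrac12\!\int u_t^2 + \tfrac{\kappa}{2}\!\int w_t^2 = F_{bs}(u,w),
\]
using $\chi_{\Omega_\eps}\to 1$ a.e.\ and dominated convergence for the first term, and $\phi(\eps)\eps \to \kappa$ for the second.

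The main obstacle is the lower semicontinuity of the layer term $I_2^\eps$: a priori $u^\eps_t$ lives on a shrinking set where only the $\phi(\eps)$-weighted norm is controlled, so one cannot hope for any weak limit of $u^\eps_t$ there. The averaging inequality $\int_0^\eps |u^\eps_t|^2 \ge \eps|m_\eps u^\eps_t|^2$ collapses that three-dimensional information onto the boundary, where the finiteness of $\kappa$ then produces a genuine $L^2(\Sigma_T)$ bound and identifies the weak limit as $w_t$. This is precisely the mechanism by which the dynamic boundary condition appears in the limit, and it is what forces the extended notion of $\Gamma$-convergence through $\iota_\eps$: the classical $L^2(\Omega)$-$\Gamma$-limit cannot see the boundary degree of freedom $w$ at all.
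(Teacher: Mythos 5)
Your proof is correct and follows essentially the same route as the paper: the same split of $F_\eps$ into a bulk term and a layer term, the same Jensen/averaging inequality $\int_0^\eps|u_t^\eps|^2\ge\eps|m_\eps u_t^\eps|^2$ collapsing the layer onto $\partial\Omega$, and the identical recovery sequence $u^\eps=\chi_{\Omega_\eps}u+\chi_{\Omega\setminus\Omega_\eps}w$. If anything, your identification of the weak limits of $u_t^\eps$ and $m_\eps u_t^\eps$ from the finiteness of $\liminf F_\eps(u^\eps)$ and closedness of $\partial_t$ is more self-contained than the paper's appeal to the a priori bounds of Lemma \ref{lem1.6}, which are stated there for solutions rather than for arbitrary sequences; your observation about the factor $\tfrac12$ is also consistent with the paper's own usage.
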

\begin{proof}
We first establish the lower semicontinuity. For this purpose we take $u^\varepsilon\in H^{1}(0,T; L^2_\eps(\Omega))$. We study 
$\displaystyle{
\liminf_{\varepsilon\to 0} F_\varepsilon(u^\eps)}$.
Let us fix $\delta> 0$ and consider $\eps<\delta$, then we have
$$
F_\varepsilon(u) \ge \int_0^T\int_{\Omega_\delta} (u^\varepsilon_t)^2\,dxdt + 
\phi(\eps) \eps
 \int_0^T\int_{\partial\Omega} (m_\eps(u^\eps)_t)^2\,d\cH^{n-1}dt,
$$
This is so, because
$$
\int_{\partial\Omega} (m_\eps(u^\eps)_t)^2\,d\cH^{n-1} \le \frac1\varepsilon 
\int_{
\Omega\setminus\Omega_\eps} 
(u^\varepsilon_t)^2\,dx_Nd\cH^{n-1}.
$$
Thus, the lower semicontinuity of the norm and (\ref{eq7}) imply that for all $\delta>0$ we have
$$
\liminf_{\varepsilon\to 0} F_\varepsilon(u^\eps) \ge  
\int_0^T\int_{\Omega_\delta} u_t^2\,dxdt + \kappa
 \int_0^T\int_{\partial\Omega} w_t^2\,d\cH^{n-1}dt.
$$
Sending $\delta$ to 0 yields
$$
\liminf_{\varepsilon\to 0} F_\varepsilon(u^\eps) \ge  F_{bs}(u,w).
$$
Now, for a given pair 
$(u, w) \in H^{1}(0,T; L^2(\Omega))\times  H^{1}(0,T; L^2(\partial\Omega))$ we have to construct a recovery sequence, $u^\varepsilon\in  H^{1}(0,T; L^2(\Omega)).$ Namely, for all $\varepsilon\in(0,1)$, we 
set
$$
u^\varepsilon = \chi_{\Omega_\varepsilon} u + \chi_{\Omega\setminus\Omega_\varepsilon} w.
$$
Of course $u^\varepsilon\in H^{1}(0,T; L^2_\eps(\Omega))$. We also have
\begin{multline*}
F_\varepsilon(u^\varepsilon) =
\int_0^T\int_{\Omega_\varepsilon} u^2_t\,dxdt + \int_0^T\int_{\Omega\setminus\Omega_\varepsilon} b^\varepsilon w^2_t\,d\cH^{n-1}dt\\ =
\int_0^T\int_{\Omega_\varepsilon} u^2_t\,dxdt + \phi(\eps) \eps
\int_0^T \int_{\partial\Omega}\int_0^\varepsilon \frac1\varepsilon w^2_t \,dx_N d\cH^{n-1}dt.
\end{multline*} 
Since  the boundary of $\Omega$  is flat, we deduce that
$$
\int_{\partial\Omega}\int_0^\varepsilon \frac1\varepsilon w^2_t \,dx_N d\cH^{n-1}
= \int_{\partial\Omega} w^2_t \, d\cH^{n-1}.
$$
Then,
$$
\lim_{\varepsilon\to 0} F_\varepsilon(u^\varepsilon) = 
\int_0^T\int_{\Omega} u^2_t\,dxdt + \kappa\int_0^T \int_{\partial\Omega} w^2_t \, d\cH^{n-1}.
$$
Hence, our claim follows.
\end{proof}

The last result in the spirit of this subsection, that we need, is a good lower estimate on $\liminf$ of functional $I_\eps$ defined below. This could hardly be called 
$\Gamma$-$\liminf$ in $\iota_\eps$, but it 
will be sufficient for our purposes.
\begin{lemma}\label{lem1.11}
Let us assume that $b_\varepsilon$ is defined by (\ref{def-be}) with $\kappa\in(0,\infty)$. We set
$$
I_\varepsilon(u) = \int_{\Omega_T} \frac{1}{b_\varepsilon}(\div (a_\varepsilon\nabla u))^2\,dx
\qquad\hbox{\rm for }u\in D(\partial E_\varepsilon).
$$
If $u^\eps \to u$ in $L^2$, then 
$$
\liminf_{\eps\to 0} I_\eps(u^\eps) \ge 
\int_{\Omega_T} |\Delta u|^2\, dx + \frac1\kappa
\int_0^T\int_{\partial\Omega} \left(\frac{\partial u}{\partial \nu}\right)^2\,d\cH^{N-1}  =: I_0(u).
$$
\end{lemma}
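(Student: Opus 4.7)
My strategy is to leverage the Reilly-type identity (Theorem \ref{lem1.8}) pointwise in $t$ to isolate a bulk term and a boundary normal-derivative term from $I_\eps(u^\eps)$, and then pass to the $\liminf$ separately. Exploiting $a_\eps\equiv 1$ and $b_\eps\equiv 1$ on $\Omega_\eps$, and $b_\eps\equiv\phi(\eps)$ on $\Omega\setminus\Omega_\eps$, I split
\[I_\eps(u^\eps) = \int_0^T\!\!\!\int_{\Omega_\eps}|\Delta u^\eps|^2 + \frac{1}{\phi(\eps)}\int_0^T\!\!\!\int_{\Omega\setminus\Omega_\eps}|\di(a_\eps\nabla u^\eps)|^2.\]
Applying Theorem \ref{lem1.8} at almost every $t$ and dropping the nonnegative contribution $\int_\Omega a_\eps^2\sum_{i,j}|\partial^2 u^\eps/\partial x_i\partial x_j|^2$ yields $\int_\Omega|\di(a_\eps\nabla u^\eps)|^2 \ge \tfrac{1}{\eps}\int_{\partial\Omega_\eps}|\partial u^\eps/\partial x_N|^2$. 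Integrating in $t$, rearranging and dividing by $\phi(\eps)$ reassembles to
\begin{equation}\label{liminf_star}
I_\eps(u^\eps) \ge \Big(1-\tfrac{1}{\phi(\eps)}\Big)\int_0^T\!\!\!\int_{\Omega_\eps}|\Delta u^\eps|^2 + \tfrac{1}{\eps\phi(\eps)}\int_0^T\!\!\!\int_{\partial\Omega_\eps}\Big|\tfrac{\partial u^\eps}{\partial x_N}\Big|^2.
\end{equation}

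Without loss of generality $\ell:=\liminf I_\eps(u^\eps)<\infty$ and I restrict to a subsequence realizing $\ell$. Since $\int_0^T\!\!\int_{\Omega_\eps}|\Delta u^\eps|^2\le I_\eps(u^\eps)$ is bounded and $u^\eps\to u$ in $L^2(\Omega_T)$, interior elliptic regularity implies $\Delta u^\eps\rightharpoonup\Delta u$ weakly in $L^2(\Omega_\delta\times(0,T))$ for every $\delta>0$. Applying norm lower semicontinuity on $\Omega_\delta$ (with $\eps<\delta$) and then letting $\delta\to 0^+$ establishes $\Delta u\in L^2(\Omega_T)$ and $\liminf_\eps\int_0^T\!\!\int_{\Omega_\eps}|\Delta u^\eps|^2\ge\int_{\Omega_T}|\Delta u|^2$.

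For the boundary term, Reilly also gives $\int_0^T\!\!\int_{\partial\Omega_\eps}|\partial u^\eps/\partial x_N|^2\le\eps\phi(\eps)\,I_\eps(u^\eps)+\eps\int_0^T\!\!\int_{\Omega_\eps}|\Delta u^\eps|^2$, uniformly bounded since $\eps\phi(\eps)\to\kappa$. Hence, up to a further subsequence, $\gamma^\eps(\partial u^\eps/\partial x_N)\rightharpoonup\zeta$ weakly in $L^2(\partial\Omega\times(0,T))$, and the task reduces to identifying $\zeta=\gamma(\partial u/\partial x_N)$. I plan a test-function argument: for $\varphi\in C^\infty(\partial\Omega\times[0,T])$, extend to $\psi(x',x_N,t)=\varphi(x',t)\eta(x_N)$ with $\eta\in C^\infty([0,1])$ satisfying $\eta\equiv 1$ on $[0,\delta]$ and $\eta\equiv 0$ on $[2\delta,1]$. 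For $\eps<\delta$, integrating by parts in $x_N$ on $\Omega_\eps$ (using $\psi(x',1,t)=0$) writes $\int_0^T\!\!\int_{\partial\Omega}\gamma^\eps(\partial u^\eps/\partial x_N)\varphi$ as volume integrals of $\partial^2 u^\eps/\partial x_N^2$ and $\partial u^\eps/\partial x_N$ against $\psi$. Substituting $\partial^2 u^\eps/\partial x_N^2=\Delta u^\eps-\Delta' u^\eps$ (where $\Delta'$ is the tangential Laplacian) and integrating $\Delta' u^\eps$ by parts twice in $x'$ on the torus converts the expression into $-\int_{\Omega_\eps}\Delta u^\eps\,\psi+\int_{\Omega_\eps}u^\eps\,\Delta'\psi-\int_{\Omega_\eps}(\partial u^\eps/\partial x_N)(\partial\psi/\partial x_N)$. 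Each piece passes to the limit: the first by $\Delta u^\eps\chi_{\Omega_\eps}\rightharpoonup\Delta u$ in $L^2(\Omega_T)$, the second by $u^\eps\to u$ in $L^2(\Omega_T)$, and the third by weak convergence of $\partial u^\eps/\partial x_N$ on $\bT^{N-1}\times(\delta,2\delta)$ where $\partial\psi/\partial x_N$ is supported (interior elliptic regularity supplies uniform $L^2$-bounds there). Undoing the integrations by parts on the limit side --- with $\gamma(\partial u/\partial x_N)$ interpreted in $(H^{1/2}(\partial\Omega))^*$ \`a la Fujiwara--Morimoto, as in Lemma \ref{lem1.1} --- the volume terms reassemble and cancel, giving $\int_0^T\!\!\int_{\partial\Omega}\zeta\,\varphi=\int_0^T\!\!\int_{\partial\Omega}\gamma(\partial u/\partial x_N)\,\varphi$ for every smooth $\varphi$. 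Consequently $\zeta=\gamma(\partial u/\partial x_N)\in L^2(\partial\Omega\times(0,T))$, and norm lower semicontinuity delivers $\liminf_\eps\int_0^T\!\!\int_{\partial\Omega_\eps}|\partial u^\eps/\partial x_N|^2\ge\int_0^T\!\!\int_{\partial\Omega}|\gamma(\partial u/\partial x_N)|^2$.

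Combining both $\liminf$ estimates in \eqref{liminf_star} with $1-1/\phi(\eps)\to 1$ and $1/(\eps\phi(\eps))\to 1/\kappa$ delivers $\ell\ge\int_{\Omega_T}|\Delta u|^2+\tfrac{1}{\kappa}\int_0^T\!\!\int_{\partial\Omega}|\partial u/\partial\nu|^2=I_0(u)$. The main obstacle is the identification $\zeta=\gamma(\partial u/\partial x_N)$: without the gradient-flow energy bounds available in Theorem \ref{thmain} but not built into the bare hypotheses of the lemma, one cannot directly invoke the pointwise-in-$t$ Lemma \ref{lem1.1}, so the test-function duality sketched above (or alternatively a Fatou/Dunford--Pettis argument extracting pointwise-in-$t$ bounds from the $L^1(0,T)$-control of $\int_{\Omega_\eps}|\di(a_\eps\nabla u^\eps)|^2$) must be used.
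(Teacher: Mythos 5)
Your proof is correct, and its skeleton is the same as the paper's: split $I_\eps$ into a bulk piece (where $a_\eps=b_\eps=1$) handled by weak lower semicontinuity of $\Delta u^\eps\chi_{\Omega_\eps}\rightharpoonup\Delta u$, and a boundary piece extracted from the Reilly identity, Theorem \ref{lem1.8}, using $\eps\phi(\eps)\to\kappa$. Two of your choices, however, genuinely diverge from (and improve on) the paper's write-up. First, your inequality \eqref{liminf_star} is an exact rearrangement $I_\eps\ge(1-\tfrac1{\phi})\int_0^T\!\!\int_{\Omega_\eps}|\Delta u^\eps|^2+\tfrac1{\eps\phi}\int_0^T\!\!\int_{\partial\Omega_\eps}|\partial u^\eps/\partial x_N|^2$; the paper instead writes $\liminf I_\eps\ge\liminf(1-\tfrac1\phi)I_\eps+\liminf\tfrac1\phi I_\eps$ and then bounds $\tfrac1\phi I_\eps$ below via $I_\eps\ge\tfrac1{\phi\eps}\int_{\partial\Omega_\eps}|\cdot|^2$, which read literally produces an extra factor $\tfrac1\phi$ and loses the boundary term; your version is the computation the paper actually needs. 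Second, for identifying the weak $L^2(\partial\Omega\times(0,T))$ limit of $\gamma^\eps(\partial u^\eps/\partial x_N)$, the paper simply invokes Lemma \ref{traceU}(2), whose hypotheses ($\sup_\eps E_\eps(u^\eps)<\infty$ and a pointwise-in-$t$ bound on $\int_{\Omega_\eps}|\di(a_\eps\nabla u^\eps)|^2$) are not among the stated hypotheses of Lemma \ref{lem1.11} and are only available in its application inside Theorem \ref{thmain}; your test-function duality argument (cut-off in $x_N$, substitute $\partial^2_{x_N}u^\eps=\Delta u^\eps-\Delta'u^\eps$, pass to the limit term by term, undo the integration by parts with the Fujiwara--Morimoto trace) uses only the bounds extractable from the finiteness of $\liminf I_\eps(u^\eps)$, so it proves the lemma exactly as stated, self-containedly. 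The price is a longer argument and a weaker mode of convergence of the traces (weak rather than the strong $L^2$ convergence Lemma \ref{traceU} provides), but weak convergence plus norm lower semicontinuity is all that is needed here.
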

\begin{proof}
We have two ways of looking at $I_\eps$. The first one is just recalling the
definition of $a_\varepsilon$. Then, we have
$$\di(a_\eps\nabla u ) = \frac1\varepsilon \frac{\partial u}{\partial \nu}\chi_{\Omega\setminus\Omega_\varepsilon} +
a_\eps \Delta u.$$
Hence,$$
I_\varepsilon(u) = \frac1{\phi(\varepsilon)}\int_0^T \int_{\Omega}
( \frac1\varepsilon \frac{\partial u}{\partial \nu}\chi_{\Omega\setminus\Omega_\varepsilon}  + a_\eps \Delta u)^2 +
\left(1-\frac1{\phi(\varepsilon)}\right)
 \int_0^T \int_{\Omega_\varepsilon} | \Delta u|^2.
$$
The argument we used in Lemma \ref{lem1.7}  implies that
$$
\Delta u ^\varepsilon \chi_{\Omega_\varepsilon}  \rightharpoonup \Delta u
$$
in $L^2(\Omega)$. Hence,
$$
\varliminf_{\varepsilon\to 0} \int_0^T\int_{\Omega_\eps}|\Delta u^\varepsilon|^2
 \ge \int_{\Omega_T} | \Delta u|^2.
$$
In particular, this implies that $\frac{\partial u }{\partial\nu}$ is well-defined as an element of $(H^{1/2}(\partial\Omega))^*$ due to the classical theory; see e.g.\ \cite{fujiwara}, where $L^p$ theory is discussed. 

Another way of looking at $I_\eps$ exploits the Reilly identity, Theorem \ref{lem1.8}, in an essential way, 
in this case we have,
\begin{multline*}
\int_{\Omega} \frac{1}{b_\varepsilon}(\div (a_\varepsilon\nabla u))^2\, dx \geq \frac1{\phi(\eps)}\int_\Omega |\di (a_\eps \nabla u^\eps)|^2\, dx\\
= \frac1{\phi(\eps)}\int_\Omega \sum_{i,j}^N a_\eps^2 
\left|\frac{\partial^2 u^\eps}{\partial x_i\partial x_j}\right|^2 \,dx + 
\frac{1}{\phi(\eps)\eps}\int_{\partial \Omega_\varepsilon}  \left|\frac{\partial u^\eps}{\partial x_N} \right|^2\,d\cH^{N-1}
\ge\frac1{\phi(\eps)\eps}\int_{\partial \Omega_\varepsilon}  \left|\frac{\partial u^\eps}{\partial x_N} \right|^2\,d\cH^{N-1}.    
\end{multline*}
We see that we can use Lemma \ref{traceU} to estimate the RHS above. Now, we combine these computations to obtain,
\begin{align*}
\liminf_{\eps\to 0} I_\eps(u^\eps) & \ge
\liminf_{\eps\to 0} (1-\frac1{\phi(\varepsilon)})I_\eps(u^\eps)+ \liminf_{\eps\to 0} \frac1{\phi(\varepsilon)} I_\eps(u^\eps) \ge \int_{\Omega_T} |\Delta  u|^2\,dxdt + \frac1\kappa \int_{\Sigma_T} 
\left|\frac{\partial u}{\partial x_N} \right|^2\,d\cH^{N-1}dt.
\end{align*}
\end{proof}

\subsection{The Energy-Dissipation Balance, proof of Theorem \ref{thmain}}


After  having gathered a series of statements on $\Gamma$-convergence, including Lemma \ref{lemE0}, we may  apply the de Giorgi--Serfaty--Mielke theory, see \cite{sylvia}, \cite{M}. 
Here, for the sake of the clarity of the argument, we assume that the data are well-prepared, i.e. $\sup_{\varepsilon>0}(E_\varepsilon(u_0)+ \|u_0\|_{L^2_\varepsilon})<\infty.$

We note that our assumptions imply that $u^\varepsilon$ converges to $u$ locally in the $H^2$-norm and
$u^\varepsilon \chi_{\Omega_\varepsilon} \to u$ in $L^2$, $\nabla u^\varepsilon \chi_{\Omega_\varepsilon} \to \nabla u$ in $L^2$, 
$\Delta u^\varepsilon \chi_{\Omega_\varepsilon} \to \Delta u$ in $L^2$. Moreover, the key statement on traces, i.e. $m_\eps(u^\eps)_t$ converges to $-\gamma(\frac{\partial u}{\partial \nu})$ weakly$^*$ in $(H^{1/2}(\partial\Omega))^* $ is proved in Theorem \ref{trach}. 

Now, we want to study 
the limit passage in (\ref{edb}). If we succeed, then we will have
another limiting Energy-Dissipation Balance,
\begin{equation}\label{edb2}
\frac 12\int_\Omega |\nabla u(x,T)|^2\, dx +\frac12\int_{\Omega_T}(u_t^2 + |\Delta u|^2)\,dxdt + 
\frac12\int_{\Sigma_T}(\kappa (\gamma(u)_t)^2 + \frac1\kappa(\frac{\partial u}{\partial \nu})^2)\, d\cH^{N-1}dt = \frac 12\int_\Omega |\nabla u_0|^2\, dx.
\end{equation}
Then, the proof of the main theorem will  follow from 
(\ref{edb2}) by differentiation in time. 

In order to obtain (\ref{edb2}) we follow (\cite{sylvia}) and we apply the liminf to (\ref{edb}),
$$
\liminf_{\varepsilon\to0} (E_\varepsilon(u_0^\eps) - E_\varepsilon(u^\varepsilon(T))) =
\liminf_{\varepsilon\to0} \frac12 \int_{\Omega_T} b_\varepsilon u^2_t\,dx dt + 
    \frac12 \int_{\Omega_T} \frac 1{b_\varepsilon}|\di (a_\varepsilon\nabla u)|^2\,dx dt.
$$
We may now apply Lemma \ref{lem1.10} and Lemma \ref{lem1.11} to deduce 
$$
\liminf_{\varepsilon\to0} (E_\varepsilon(u_0^\eps) - E_\varepsilon(u^\varepsilon(T))) \ge
\frac12\int_{\Omega_T}(u_t^2 + |\Delta u|^2)\,dxdt + 
\frac12\int_{\Sigma_T}(\kappa w_t^2 + \frac1\kappa(\frac{\partial u}{\partial \nu})^2)\, d\cH^{N-1}dt.
$$
We recall that $w$ is the weak limit in $L^2(\partial\Omega)$ of $m_\eps(u^\eps)$ and 
$m_\eps(u^\eps)_t \in L^2(\partial\Omega\times(0,T))$. Since $m_\eps(u^\eps)_t$ has a weak limit, see (\ref{eq7}), we deduce that $w_t$ is in  $L^2(\partial\Omega\times(0,T))$, so the formula above is correct. Moreover, we recall that
Lemma \ref{traceU} and Lemma \ref{lem11} give  us $w = \gamma(u)$.
Hence $w_t = \gamma(u)_t$. As a result, we may rewrite the above inequality in a  way that is
more suitable for our purposes,
$$
\liminf_{\varepsilon\to0} (E_\varepsilon(u_0^\eps) - E_\varepsilon(u^\varepsilon(T))) \ge
\frac12\int_{\Omega_T}(u_t^2 + |\Delta u|^2)\,dxdt + 
\frac12\int_{\Sigma_T}( \kappa\gamma(u)_t^2 + \frac1\kappa(\frac{\partial u}{\partial \nu})^2)\, d\cH^{N-1}dt =:R.
$$
Of course, we have
$$
\frac12\int_{\Omega_T}(u_t^2 + |\Delta u|^2)\,dxdt \ge 
\int_{\Omega_T}u_t \Delta u \,dxdt .
$$
We wish to integrate by parts, but apparently $u_t$ lacks the necessary regularity. For this purpose we mollify $u$ by convolving with the product $\varphi (x',x_N) = \varphi_{N-1}(x')\varphi_1(x_N)$ of  standard kernels, where $\varphi_{N-1}$ (resp. $\varphi_1$) depends on $N-1$ (resp. one) variables so that $\gamma (\varphi^\delta* u) = \gamma(u)* \varphi_{N-1}^\delta$. In order to make the notation short we will write $u^\delta$ for the regularization so that $\gamma(u^\delta_t) = \gamma(u)_t^\delta$.
Thus, 
\begin{eqnarray*}
R & =&\lim_{\delta\to 0} \int_{\Omega_T}u^\delta_t \Delta u^\delta \,dxdt +
\frac12\int_{\Sigma_T}( \kappa(\gamma(u)^\delta_t)^2 + \frac1\kappa(\frac{\partial u^\delta}{\partial \nu})^2)\, d\cH^{N-1}dt\\
&= &\lim_{\delta\to 0} [- \int_{\Omega_T} \nabla u^\delta_t \nabla u^\delta \,dxdt
+ \int_{\Sigma_T}(\gamma(u_t)^\delta \frac{\partial u^\delta}{\partial \nu}+
 \frac{\kappa}2(\gamma(u)^\delta_t)^2 + \frac1{2\kappa}(\frac{\partial u^\delta}{\partial \nu})^2)
\, d\cH^{N-1}dt]
\\
&\ge &E(u_0) - E(u(T)).
\end{eqnarray*}
Since $\lim_{\varepsilon\to0}E_\varepsilon(u_0) = E(u_0)$ we deduce that for any $T>0$ we have
$$
E(u(T)) \le \liminf_{\varepsilon\to0} E_\varepsilon (u^\varepsilon(T)) \le 
\limsup_{\varepsilon\to0} E_\varepsilon (u^\varepsilon(T)) \le E(u(T)).
$$
If we combine these observation we will reach the identity (\ref{edb2}). Let us now differentiate (\ref{edb2}) with respect to time. As a result, we obtain
$$
0 = \int_\Omega \nabla u \nabla u_t + \frac12\int_\Omega( u_t^2 + \Delta u^2)\,dxdt+
\frac12\int_{\partial\Omega}(\kappa(\gamma(u)_t)^2 + \frac1\kappa\gamma( \frac{\partial u}{\partial \nu})^2)\, d\cH^{N-1}
$$
for a.e. $t>0$. The first term on the RHS is the action of $\nabla u_t\in (H^1)^*$ over $\nabla u \in H^1$. By definition  it is
$$
\int_\Omega \nabla u \nabla u_t = - \int_\Omega \Delta u u_t \,dx +
\int_{\partial\Omega}\gamma(u)_t \gamma( \frac{\partial u}{\partial \nu}))\, d\cH^{N-1}.
$$
The last integral on the RHS is well-defined, because we showed above that $\gamma(u)_t \in L^2(\Sigma_T)$. Moreover, we know that $\gamma( \frac{\partial u}{\partial \nu})\in L^2(\Sigma_T)$ too.
Hence,
$$
0 = \frac12  \int_\Omega ( \Delta u -u_t )^2\,dx + 
\frac12 \int_{\partial\Omega}(\sqrt\kappa\gamma(u)_t +\frac1{\sqrt\kappa}\gamma( \frac{\partial u}{\partial \nu}))^2\, d\cH^{N-1}.
$$
Equivalently, this means that (\ref{eql}) holds.

Let us now  show uniqueness of solutions to  (\ref{eql}). Suppose that we have two solutions $w_1, w_2$ of this equation. Then, $u:= w_2 - w_1$ satisfies  (\ref{eql}) with zero initial conditions. We may test this eq. with  $u$. As a result we obtain,
$$
\frac 12 \frac d{dt}\int_\Omega u^2\, dx = \int_\Omega u \Delta u \,dx.
$$
We may integrate the RHS by parts. Then, we obtain,
$$
\frac 12 \frac d{dt}\int_\Omega u^2\, dx = -\int_\Omega |\nabla u|^2\,dx
+\int_{\partial\Omega } \gamma u \gamma (\frac{\partial u }{\partial \nu})\, d\cH^{N-1}.
$$
Now, (\ref{eql}${}_2$) implies that,
$$
\frac 12 \frac d{dt}\int_\Omega u^2\, dx +
\frac 12 \frac d{dt}\int_{\partial\Omega} (\gamma(u))^2\, dx 
= -\int_\Omega |\nabla u|^2\,dx\le 0.
$$
Thus, the first claim follows.

We will establish the second part of the theorem. For this purpose we have to compute the differential of $E_0$ in $X_0^\kappa$. We notice that if $(u_0, v_0)\in D(E_0)$, then $u_0\in H^1(\Omega)$ and $v_0 = \gamma(u_0)$. By definition of the subdifferential, if $(u_0, \gamma(u_0))\in D(\partial E_0)$, then there is $(\xi, \zeta) \in L^2(\Omega)\times L^2(\partial\Omega)$ that for all $(h_1, h_2) \in L^2(\Omega)\times L^2(\partial\Omega)$ we have
$$
E(u_0+h_1,\gamma(u_0) + h_2) - E(u_0,\gamma(u_0)) \ge
((\xi,\zeta), (h_1, h_2))_{X_0^\kappa}.
$$
However, the LHS is finite if and only if $h_1\in H^1(\Omega)$
and $h_2 = \gamma(h_1).$
It is also easy to see that 
$$
\int_\Omega \nabla u_0\nabla h_1\, dx = \int_\Omega \xi h_1\, dx + 
\kappa\int_{\partial\Omega} \zeta \gamma(h_1)\, d\cH^{N-1} .
$$
This  in turn implies that $\xi = - \Delta u_0$ and $\zeta =\frac1\kappa\frac{\partial u_0}{\partial \nu}$.
\qed

We have to clarify the relationship between $E_\eps$ and $E_0$. We have already seen that the $\Gamma$-limit of $E_\eps$ with respect to $L^2$-topology, $\bar E$, is incorrect. The question is if the convergence takes place in an appropriate sense. In fact we were forced to look at this issue when we considered convergence of functionals $F_\eps$, see (\ref{def-Fe}). We will use here the $\Gamma$-convergence in $\iota_\eps$ adjusted to our specific needs here.

Now, we are interested in the case when, $X_\eps = L^2_\eps(\Omega)$.
For $\eps\in(0,1)$ we define $\iota_\eps: X_\eps\to X_0^\kappa$ as follows,
$$
\iota_\eps (v) = (v,  m_\eps v).
$$
We shall see that even though $E_\eps$ does not $\Gamma$-converge to $\bar E$, then we have an appropriate replacement.
\begin{lemma}\label{l3.10}
$E_\eps$ defined in (\ref{defE}) $\Gamma$-converges to $E_0$ defined in Theorem \ref{thmain} with respect to $\iota_\eps$.
\end{lemma}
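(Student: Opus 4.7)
The plan is to verify the two conditions of Definition \ref{dGC} separately, invoking the trace-and-average machinery of Section \ref{sec2}; the argument is in the same spirit as Lemma \ref{lem1.10} but for the energies $E_\eps$ rather than the dissipations $F_\eps$.

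For the recovery sequence condition, let $(u,w)\in X_0^\kappa$. If $E_0(u,w)=+\infty$, any sequence works, so assume $u\in H^1(\Omega)$ and $w=\gamma u$. I would take the constant sequence $u^\eps\equiv u$. Since $a_\eps\to 1$ a.e.\ in $\Omega$ and $0\le a_\eps\le 1$, dominated convergence gives
\[
E_\eps(u)=\tfrac12\!\int_\Omega a_\eps|\nabla u|^2\,dx\;\longrightarrow\;\tfrac12\!\int_\Omega|\nabla u|^2\,dx=E_0(u,w).
\]
The first coordinate of $\iota_\eps u$ is $u$, so it converges trivially. For the boundary coordinate, the bound $\sup_\eps E_\eps(u)<\infty$ together with the fact that $a_\eps$ defined in (\ref{dea}) satisfies (\ref{asa_nondeg}) with $p=1$ (cf.\ Remark \ref{pot-a}) allows us to apply Lemma \ref{lem11}(a), which gives $m_\eps u-\gamma^\eps u\to 0$ in $L^2(\partial\Omega)$; combined with Lemma \ref{traceU}(1) applied to the constant sequence, $m_\eps u\to\gamma u$ in $L^2(\partial\Omega)$, completing the convergence $\iota_\eps u\to(u,\gamma u)$.

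For the lower semicontinuity condition, suppose $\iota_\eps(u^\eps)\to(u,w)$ in $X_0^\kappa$ with $L:=\liminf_\eps E_\eps(u^\eps)<\infty$. Passing to a subsequence realizing $L$, we have $\sup_\eps E_\eps(u^\eps)<\infty$, and the strong convergence $u^\eps\to u$ in $L^2(\Omega)$ furnishes $\sup_\eps\|u^\eps\|_{L^2}<\infty$. Lemma \ref{traceU}(0) then provides a further subsequence $u^\eps\rightharpoonup\tilde u$ weakly in $L^2(\Omega)$ and weakly in $H^1(\Omega_\delta)$ for every $\delta>0$; by uniqueness of the $L^2$-limit $\tilde u=u$. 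Fix $\delta>0$; using the uniform convergence $a_\eps\to 1$ on $\Omega_\delta$ and weak lower semicontinuity of the Dirichlet integral,
\[
\tfrac12\!\int_{\Omega_\delta}|\nabla u|^2\,dx\;\le\;\liminf_\eps\tfrac12\!\int_{\Omega_\delta}|\nabla u^\eps|^2\,dx\;=\;\liminf_\eps\tfrac12\!\int_{\Omega_\delta}a_\eps|\nabla u^\eps|^2\,dx\;\le\;L.
\]
Sending $\delta\to 0^+$ by monotone convergence shows $u\in H^1(\Omega)$ with $\tfrac12\int_\Omega|\nabla u|^2\le L$.

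It remains to identify $w=\gamma u$ so that $E_0(u,w)=\tfrac12\int_\Omega|\nabla u|^2$. By Lemma \ref{traceU}(1), $\gamma^\eps u^\eps\to\gamma u$ in $L^2(\partial\Omega)$, and by Lemma \ref{lem11}(a) we have $\gamma^\eps u^\eps-m_\eps u^\eps\to 0$ in $L^2(\partial\Omega)$; hence $m_\eps u^\eps\to\gamma u$, forcing $w=\gamma u$ by the assumed convergence of the second coordinate of $\iota_\eps u^\eps$. Thus $E_0(u,w)\le L=\liminf_\eps E_\eps(u^\eps)$. The only genuine subtlety is precisely this boundary identification: without the non-degeneracy assumption (\ref{asa_nondeg}), the limits of $m_\eps u^\eps$ and $\gamma^\eps u^\eps$ could disagree, and $w$ need not equal $\gamma u$—so $E_0(u,w)$ could be $+\infty$ even when $u\in H^1(\Omega)$, and the desired inequality would fail. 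The structure of $a_\eps$ in (\ref{dea}) is exactly what rules this out via Lemma \ref{lem11}(a), and the rest is routine.
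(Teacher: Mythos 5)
Your proposal is correct and follows essentially the same route as the paper: the lower bound is obtained by restricting to $\Omega_\delta$ (where $a_\eps=1$ for $\eps<\delta$), using weak lower semicontinuity of the Dirichlet integral and letting $\delta\to 0$, the boundary component is identified as $\gamma u$ by combining Lemma \ref{traceU} with Lemma \ref{lem11}(a), and the recovery sequence is the constant one. Your write-up is, if anything, slightly more explicit than the paper's about why Lemma \ref{lem11} is applicable and about the role of the non-degeneracy condition.
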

\begin{proof}
Let us take   $v^\eps\in X_\eps$, we may assume that 
\begin{equation}\label{34r1}
\liminf_{\eps\to0} E_\eps(v^\eps) <\infty,
\end{equation}
for otherwise there is nothing to prove.

Since we assumed that $\iota_\eps v^\eps \to v=(v_1, v_2)$ in $X_0^\kappa$, then we have,
$$
\| v^\eps - v_1 \|^2_{L^2(\Omega)} + \kappa
\| \bar v^\eps - v_2  \|^2_{L^2(\partial\Omega)} \to 0.
$$
We will first see that 
\begin{equation}\label{34r2}
\liminf_{\eps\to0} E_\eps(v^\eps) \ge 
\frac12\int_\Omega |\nabla v_1|^2\,dx.
\end{equation} 
Indeed, for any $\delta\in (0,\frac12)$, we have
$$
\liminf_{\eps\to0} E_\eps(v^\eps) \ge \liminf_{\eps\to0} \int_{\Omega_\delta} \frac 12 |\nabla v^\eps|^2\,dx \ge 
\int_{\Omega_\delta} \frac 12 |\nabla v_1|^2\,dx ,
$$
where we use the lower semicontinuity of the $L^2$ norm. After taking the limit above as $\delta\to 0$, we reach (\ref{34r2}).

Our assumptions permit us to invoke Lemma \ref{lem11}. This tells us that
$\lim_{\eps\to0} (m_\eps v^\eps- \gamma^\eps(u^\eps)) =0$ in the $L^2(\partial\Omega)$-norm, where $\gamma^\eps$ is defined in Lemma \ref{traceU}. At the same time Lemma  \ref{traceU} implies that $\gamma(v) = \lim_{\eps\to 0} \gamma^\eps(u^\eps)$ in  $L^2(\partial\Omega)$. Thus, we conclude that
$\gamma(v) = v_2$. As a result,
$E_0(v) = E_0(v_1, \gamma(v_1))$ is finite. The proof of (i) in Definition \ref{dGC} is finished.

If we are given $w =(v, \gamma v)\in X_0$, then for the recovery sequence, we may take $v^\eps = w.$ In this case $\iota_\eps v^\eps \to w$ due to Lemmas \ref{lem11} and \ref{traceU}.
\end{proof}
We also state a result which makes our  study of weak solutions complete.
\begin{corollary}\label{cor31} Let us define 
$E:L^2(\Omega)\to \bR\cup\{ +\infty\}$ by 
$$
E(u) = \left\{
\begin{array}{ll}
\frac12\int_\Omega |\nabla u|^2 & u\in H^1(\Omega),\\
+\infty & u\in L^2(\Omega) \setminus H^1(\Omega).
\end{array}
\right.
$$
Then, functional $E$   $\Gamma$-converges to $E_0$ with respect to $\iota_\eps,$ where $\iota_\eps$ is defined above.
\end{corollary}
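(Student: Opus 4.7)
The plan is to verify conditions (i) and (ii) of Definition \ref{dGC} for the $\eps$-independent functional $E$, with the same immersions $\iota_\eps$ used in Lemma \ref{l3.10}, reducing to that lemma via the pointwise bound $a_\eps\le 1$.

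For the lower-bound inequality (i), I would pick any sequence $v^\eps\in X_\eps$ with $\iota_\eps v^\eps=(v^\eps, m_\eps v^\eps)\to (v_1,v_2)$ in $X_0^\kappa$, and assume $\liminf_{\eps\to 0}E(v^\eps)<\infty$ (otherwise the inequality is automatic). Since $a_\eps\le 1$ pointwise, one has $E_\eps(v^\eps)\le E(v^\eps)$ for every $\eps>0$, so $\liminf_{\eps\to 0} E_\eps(v^\eps)\le \liminf_{\eps\to 0} E(v^\eps)<\infty$. Lemma \ref{l3.10} then delivers at once
$$E_0(v_1,v_2)\le \liminf_{\eps\to 0} E_\eps(v^\eps)\le \liminf_{\eps\to 0} E(v^\eps).$$

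For the recovery sequence (ii), given $w=(v_1,v_2)\in X_0^\kappa$ with $E_0(w)<\infty$, the definition of $E_0$ in Theorem \ref{thmain} forces $v_1\in H^1(\Omega)$ and $v_2=\gamma v_1$. I would choose the constant sequence $v^\eps\equiv v_1$, for which $E(v^\eps)=\tfrac12\|\nabla v_1\|_{L^2}^2=E_0(w)$ identically, so the energy limit is exact. To verify $\iota_\eps v^\eps\to w$ in $X_0^\kappa$ it suffices to show $m_\eps v_1\to \gamma v_1$ in $L^2(\partial\Omega)$, which is delivered by Lemma \ref{lem11} applied to the constant sequence with the trivial choice $a_\eps\equiv 1$ (for which the non-degeneracy condition \eqref{asa_nondeg} is immediate), combined with Lemma \ref{traceU} to identify the limit of $\gamma^\eps v_1$ as $\gamma v_1$.

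There is no real obstacle here: the corollary is essentially a reformulation of Lemma \ref{l3.10}, the single new ingredient being the pointwise domination $E_\eps\le E$ that transfers the $\liminf$ bound. The one mild subtlety worth noting is that, since $E$ is $\eps$-independent, the recovery sequence yields the equality $E(v^\eps)=E_0(w)$ for every $\eps$, rather than only in the limit as was the case for $E_\eps$ in Lemma \ref{l3.10}; in particular no dominated-convergence argument on $a_\eps\to 1$ is required here.
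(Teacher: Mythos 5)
Your proof is correct, but the mechanism for the lower-bound inequality differs from the paper's. The paper reruns the argument of Lemma \ref{l3.10} and observes that it \emph{simplifies}: since $E$ has no degenerate weight, the finiteness of $\liminf_{\eps\to0}E(v^\eps)$ gives a uniform bound on $\|v^\eps\|_{H^1(\Omega)}$ up to the boundary, so $v^\eps\rightharpoonup v_1$ weakly in $H^1(\Omega)$ and the identification $v_2=\gamma v_1$ follows from the continuity (indeed compactness) of the trace under weak $H^1$ convergence; the recovery sequence is then the same constant sequence you use. You instead treat Lemma \ref{l3.10} as a black box and transfer its $\liminf$ inequality through the pointwise domination $E_\eps\le E$ (valid since $0\le a_\eps\le1$), which is a clean and shorter reduction. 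The trade-off is worth noting: your route imports the full strength of Lemma \ref{l3.10}, whose $\liminf$ part rests on Lemma \ref{lem11} for the degenerate weight \eqref{dea}, whereas the paper's route is self-contained for $E$ and exploits only the uniform coercivity of the undamped Dirichlet energy. Your handling of the recovery sequence (constant sequence, $m_\eps v_1\to\gamma v_1$ via Lemmas \ref{lem11} and \ref{traceU} with $a_\eps\equiv1$, for which \eqref{asa_nondeg} holds trivially) matches the paper's, and your observation that the energy equality $E(v^\eps)=E_0(w)$ is exact for every $\eps$ is accurate. Like the paper, you only construct a recovery sequence for $w$ with $E_0(w)<\infty$; for completeness one could note that for $E_0(w)=+\infty$ any sequence with $\iota_\eps u^\eps\to w$ (e.g.\ $u^\eps=\chi_{\Omega_\eps}v_1+\chi_{\Omega\setminus\Omega_\eps}v_2$) works by the already-established lower bound, but this gap is shared with the paper and is minor.
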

\begin{proof}
Essentially, the argument is as above and it gets simpler, since (\ref{34r1}), implies that $v^\eps$ converges weakly in $H^1$ to $v$. Hence, we may use the continuity of the trace with respect to weak convergence in $H^1$. The rest of the argument goes unchanged.
\end{proof}

\section{Cases $\kappa \in \{0,\infty\}$}

Our study in the previous section did not include the cases $\kappa =0$ nor $\kappa = \infty$. We conduct the analysis of these cases here, we recall that $\kappa$ was defined in (\ref{df-KA}). We shall see that the left out cases lead in the limit to either Dirichlet or Neumann data and the argument is similar despite apparent differences.

\subsection{Case $\kappa  =\infty$: zero Dirichlet data}\label{ss-D}

We claim that in this case the influence of the boundary layer is so strong that it damps any evolution on the boundary.
Our starting point is the following inequality implied by (\ref{eq6}),
\begin{equation}\label{c3.5}
\int_{\Omega} b_\varepsilon (u^\eps)^2(x,t)\,dx +  \int_{\Omega_T} a_\varepsilon |\nabla u^\eps(x,t)|^2\,dxdt
\le \int_{\Omega} b_\varepsilon (u^\eps_0)^2(x)\,dx \le M<\infty.
\end{equation}
This means that we assume that the initial condition is well-prepared. At the same time the regularity of initial conditions implies that Lemma \ref{lem1.6} part (i) and Lemma \ref{lem1.7} hold. 

We gathered so much information about $u^\eps$ that we may show:
\begin{corollary}\label{cor4.1} Let us suppose that  $\kappa =\infty$,
$u_0^\eps$ are such that 
(\ref{c3.5}) 
holds and $\sup_{\eps>0}E_\eps(u^\eps_0)\le M<\infty.$ Then:\\
(1) there is a subsequence $u^\eps$ such that $u^\eps \xrightharpoonup{*} u$  in $L^\infty(0,T; L^2(\Omega))$ and for all $\delta>0$ $\nabla u^\eps\rightharpoonup \nabla u$ in  $L^2(0,T; L^2(\Omega_\delta))$. Moreover, $u_t,$ $\Delta u\in L^2(0,T;L^2(\Omega)).$\\
(2) $\gamma(u)=0$ in $L^2(\Sigma_T)$, i.e. $u\in L^2(0,T; H^1_0(\Omega))$;\\
(3)  $u$ is a strong solution to the heat eq.,
\begin{eqnarray}\label{r-hD}
 &u_t  = \Delta u & (x,t)\in \Omega_T,\nonumber\\
 &u = 0 & (x,t)\in \Sigma_T,\\
 &u(x,0) = u_0(x) & x\in \Omega. \nonumber
\end{eqnarray}
\end{corollary}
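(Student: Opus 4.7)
The plan is to extract enough weak/weak-$*$ limits from the a priori bounds, then separately identify the trace of the limit and verify that it satisfies the heat equation pointwise.

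First I would collect the compactness. From (\ref{c3.5}) and the bound $b_\eps \ge 1$ we get that $u^\eps$ is bounded in $L^\infty(0,T;L^2(\Omega))$, while for each $\delta>0$, on $\Omega_\delta$ one has $a_\eps \equiv 1$ for small $\eps$, so $\nabla u^\eps$ is bounded in $L^2(0,T;L^2(\Omega_\delta))$. Applying the EDB of Lemma \ref{leEDB} together with the uniform bound $E_\eps(u_0^\eps)\le M$, we find that $u_t^\eps$ is bounded in $L^2(\Omega_T)$ and $\frac{1}{\sqrt{b_\eps}}\div(a_\eps\nabla u^\eps)$ is bounded in $L^2(\Omega_T)$. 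The Reilly-type identity (Theorem \ref{lem1.8}) then yields a uniform $L^2(0,T;H^2(\Omega_\delta))$ bound, and in particular a uniform $L^2(0,T;L^2(\Omega_\delta))$ bound on $\Delta u^\eps$ for each $\delta>0$. A diagonal extraction yields a subsequence converging in the senses claimed in (1), with a limit $u$ having $u_t,\Delta u \in L^2(\Omega_T)$ (the global $L^2$ bound on $\Delta u$ coming from lower semicontinuity after letting $\delta\to 0^+$).

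Next I would pin down the boundary behavior. The crucial estimate in the regime $\kappa=\infty$ is
\[
\int_0^T\!\!\int_{\partial\Omega} (m_\eps u^\eps)^2\, d\cH^{N-1}dt
\le \frac{1}{\eps}\int_0^T\!\!\int_{\Omega\setminus\Omega_\eps}(u^\eps)^2
\le \frac{1}{\eps\phi(\eps)}\int_0^T\!\!\int_{\Omega\setminus\Omega_\eps} b_\eps (u^\eps)^2
\le \frac{TM}{\eps\phi(\eps)}\xrightarrow[\eps\to 0]{} 0,
\]
exactly as in (\ref{5.tr}). Since $a_\eps$ given by (\ref{dea}) satisfies (\ref{asa_nondeg}) (for both $p=1$ and $p=1/2$, by Remark \ref{pot-a}), the time-integrated Lemmata \ref{traceU-w} and \ref{lem11-int} apply and give
\[
\gamma(u) = \lim_{\eps\to 0}\gamma^\eps(u^\eps) = \lim_{\eps\to 0} m_\eps(u^\eps) = 0
\quad\text{in }L^2(\Sigma_T),
\]
the last limit by the display above. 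Combined with the interior $H^1$ bound, this yields $u\in L^2(0,T;H^1_0(\Omega))$, proving (2).

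It remains to verify the PDE. For any $\varphi\in C_c^\infty(\Omega_T)$, pick $\delta>0$ so that $\supp\varphi \subset \Omega_\delta\times(0,T)$. For $\eps<\delta$ we have $a_\eps\equiv 1$ and $b_\eps\equiv 1$ on $\supp\varphi$, so (\ref{eq1p}) restricts to $u_t^\eps=\Delta u^\eps$ pointwise there. The weak convergences $u_t^\eps\rightharpoonup u_t$ and $\Delta u^\eps\rightharpoonup \Delta u$ in $L^2(\Omega_\delta\times(0,T))$ allow us to pass to the limit and conclude $u_t=\Delta u$ as elements of $L^2(\Omega_T)$, establishing (\ref{r-hD}$_1$). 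For the initial condition, use a test function $\varphi\in C^1([0,T]\times\overline\Omega)$ with $\varphi(T,\cdot)=0$ and $\supp\varphi(t,\cdot)\subset\Omega_\delta$ for all $t$; the identity $\int_\Omega b_\eps u_0^\eps\varphi(0)+\int_0^T\!\int_\Omega b_\eps u^\eps\varphi_t=\int_0^T\!\int_\Omega a_\eps\nabla u^\eps\cdot\nabla\varphi$ passes to the limit (on $\Omega_\delta$ all weights are $1$), and combined with $u_0^\eps\to u_0$ in $L^2$ implied by the well-preparedness, we get $u(0)=u_0$. Uniqueness of the solution to the Dirichlet heat equation then upgrades the subsequential convergence to convergence of the whole family.

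The main obstacle is not the passage to the limit in the interior equation, which is routine once the compactness is in place, but rather the identification $\gamma(u)=0$: one must correlate the auxiliary trace $\gamma^\eps$ at distance $\eps$ with the genuine boundary trace $\gamma$ and with the average $m_\eps$, and this is precisely the content of the trace/average machinery developed in Section \ref{sec2}. The non-degeneracy (\ref{asa_nondeg}), which holds for the specific $a_\eps$ in (\ref{dea}), is what makes these three objects share a common limit.
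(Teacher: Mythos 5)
Your proposal is correct and follows essentially the same route as the paper: the same a priori bounds (energy equality, the EDB of Lemma \ref{leEDB}, and the Reilly identity for interior $H^2$ control), the same key estimate \eqref{5.tr} showing $m_\eps u^\eps \to 0$, the same identification $\gamma(u)=\lim\gamma^\eps(u^\eps)=\lim m_\eps(u^\eps)=0$ via the trace/average lemmas of Section \ref{sec2}, and the same limit passage in a compactly supported weak formulation where all weights equal $1$. The only cosmetic differences are that you invoke the time-integrated Lemmata \ref{traceU-w} and \ref{lem11-int} rather than the pointwise-in-time versions, and you add the (true but not claimed) upgrade to convergence of the whole family via uniqueness.
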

\begin{proof}
We may apply  Lemma \ref{lem1.6} part (i) and Lemma \ref{lem1.7} to deduce that part (1) holds. 

We have shown in (\ref{5.tr}) that
$\| m_\eps(u^\eps) \|^2 \le 
\frac1{\eps \phi} \int_\Omega b_\eps u^2_{0,\eps}\,dx \to 0,$ when $\eps \to 0$.
By Lemma \ref{traceU} we conclude that $\gamma(u) = w$ where is the weak limit of $m_\eps(u^\eps)$. However, we proved above that $m_\eps(u^\eps)$ converges strongly in $L^2(\Sigma_T)$ to 0. As a result $\gamma(u) =0$ and part (2) follows.

It remains to prove that $u$ is a solution to the heat eq. It is not clear if a version of the Energy-Dissipation Balance holds. But we will not need it. It suffices to use the weak form of equation and available estimates on strong solutions. Let us take $\varphi\in C^\infty_c(\Omega\times \bR)$ and such that $\varphi(\cdot, T) =0$. We multiply (\ref{eq2}) by $\varphi$ and integrate over $\Omega_T$. Next, we integrate by parts, 
this yields,
$$
\int_\Omega u_0^\eps \varphi \,dx=\int_{\Omega_T} (-b_\eps u^\eps \varphi_t +a_\eps \nabla u^\eps \nabla\varphi)\,dxdt =\int_0^T\int_{\Omega_\delta} (-b_\eps u^\eps \varphi_t +a_\eps \nabla u^\eps \nabla\varphi)\,dxdt .
$$
The boundary terms vanish, because the support of $\varphi$ is contained in $\Omega_\delta\times[0,T]$ for sufficiently small $\delta>0$.
We invoke Lemma \ref{lem1.7} to pass to the limit. This yields,
$$
\int_\Omega u_0 \varphi \,dx=\int_0^T\int_{\Omega_\delta} (- u \varphi_t + \nabla u\nabla\varphi)\,dxdt=\int_0^T\int_{\Omega} (- u \varphi_t + \nabla u\nabla\varphi)\,dxdt .
$$
Since $\Delta u\in L^2(0,T;L^2(\Omega))$, we may integrate by parts. Moreover, since $\varphi$ is arbitrary, we deduce that $u$ satisfies
$$
u_t = \Delta u\qquad \hbox{in } \Omega_T
$$
as well as the initial condition and the homogeneous Dirichlet boundary data.
\end{proof}

It is interesting to notice that if we are interested just in the trace of the limit $u$, then the assumption $\sup_\eps E_\eps(u^\eps_0)<\infty$ is not needed to deduce that $\gamma u = 0.$ 

\bigskip
For the sake of completeness we recall that (\ref{r-hD}) is the gradient flow of $\tilde E$, where
$$
\tilde E(u) = \left\{
\begin{array}{ll}
\int_\Omega \frac12 |\nabla u |^2\,dx& u \in H^1_0(\Omega),\\
\infty & u\in L^2(\Omega) \setminus H^1_0(\Omega).
\end{array}
\right.
$$
We should establish the relationship between $E_\eps$ and $\tilde E$. We notice that with the help of Corollary \ref{cor4.1} we could show an analogue of Lemma \ref{l3.10}.
\begin{corollary}
If $\kappa =\infty$, then $E_\eps$, defined in (\ref{defE}), $\Gamma$-converges to $\tilde E$ with respect to $\iota_\eps$. \qed
\end{corollary}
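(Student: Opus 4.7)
The plan is to verify the two conditions in Definition \ref{dGC} in close analogy with Lemma \ref{l3.10}, the only new feature being that when $\kappa = \infty$ the relevant target in $X_0^\kappa$ must satisfy $v_2 = 0$. Concretely, I interpret $\iota_\eps v^\eps \to (v_1,v_2)$ to mean $v^\eps \to v_1$ in $L^2(\Omega)$ and $m_\eps v^\eps \to v_2$ in $L^2(\partial\Omega)$, and I reserve a finite value of $\tilde E$ for pairs $(v_1, 0)$ with $v_1 \in H^1_0(\Omega)$.

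For the $\liminf$ inequality, pick any sequence with $\iota_\eps v^\eps \to (v_1,v_2)$ and assume $\liminf E_\eps(v^\eps) < \infty$, as otherwise there is nothing to prove. First I would extract a subsequence along which the $\liminf$ is a finite limit $L$. For every $\delta \in (0, \tfrac12)$ the bound $E_\eps(v^\eps) \geq \tfrac12 \int_{\Omega_\delta} a_\eps |\nabla v^\eps|^2$ together with the locally uniform convergence $a_\eps \to 1$ from (\ref{asa}) gives a uniform bound on $\|\nabla v^\eps\|_{L^2(\Omega_\delta)}$. Combined with $v^\eps \to v_1$ in $L^2$, a diagonal extraction produces $v_1 \in H^1_{loc}(\Omega)$ with $v^\eps \rightharpoonup v_1$ weakly in $H^1(\Omega_\delta)$ for every $\delta$. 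Weak lower semicontinuity of the $L^2$ norm yields $\tfrac12 \int_{\Omega_\delta} |\nabla v_1|^2 \leq L$ for each $\delta$, so letting $\delta \to 0^+$ gives $v_1 \in H^1(\Omega)$ and $\tfrac12 \int_\Omega |\nabla v_1|^2 \leq L$.

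It remains to identify the trace. Since $a_\eps$ from (\ref{dea}) satisfies (\ref{asa_nondeg}) with $p=1$ (see Remark \ref{pot-a}), Lemma \ref{lem11}(a) gives $m_\eps v^\eps - \gamma^\eps v^\eps \to 0$ in $L^2(\partial\Omega)$, while Lemma \ref{traceU}(1) gives $\gamma^\eps v^\eps \to \gamma v_1$ in $L^2(\partial\Omega)$. Therefore $m_\eps v^\eps \to \gamma v_1$, and by uniqueness of the limit $v_2 = \gamma v_1$. Since finiteness of $\tilde E$ at the target requires $v_2 = 0$, we obtain $\gamma v_1 = 0$, hence $v_1 \in H^1_0(\Omega)$, and the previous bound reads $\tilde E(v_1) \leq L = \liminf E_\eps(v^\eps)$, which is exactly (i) of Definition \ref{dGC}.

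For the recovery sequence, given any $v \in H^1_0(\Omega)$ I take the constant sequence $v^\eps \equiv v$. Then $v^\eps \to v$ trivially in $L^2(\Omega)$, and Lemma \ref{lem11}(a) combined with continuity of $\gamma$ on $H^1(\Omega)$ yields $m_\eps v \to \gamma v = 0$ in $L^2(\partial\Omega)$; thus $\iota_\eps v^\eps \to (v, 0)$. The energies converge by dominated convergence, $E_\eps(v) = \tfrac12 \int_\Omega a_\eps |\nabla v|^2 \to \tfrac12 \int_\Omega |\nabla v|^2 = \tilde E(v)$, because $0 \leq a_\eps \leq 1$ and $a_\eps \to 1$ a.e.\ in $\Omega$. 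This establishes (ii). The only delicate point in the whole argument is the trace identification in the $\liminf$ step; once Lemmas \ref{traceU} and \ref{lem11} are invoked, the additional constraint $\gamma v_1 = 0$ forced by $\kappa = \infty$ is an immediate consequence of the convergence hypothesis, and no further effort is required.
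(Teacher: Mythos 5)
The paper gives no written proof of this corollary (it is stated with a \verb|\qed| and a pointer to Corollary \ref{cor4.1}), so there is nothing to compare line by line; but your attempt, as written, contains a genuine logical gap in the liminf part. After correctly deriving $v_2=\gamma v_1$ from Lemmas \ref{traceU} and \ref{lem11}, you write ``Since finiteness of $\tilde E$ at the target requires $v_2=0$, we obtain $\gamma v_1=0$.'' This is circular: condition (i) of Definition \ref{dGC} must be verified at \emph{every} target $w=(v_1,v_2)$, including those with $\tilde E(w)=+\infty$, and nothing in your hypotheses forces the boundary component to vanish. Indeed, under your stated reading of $\iota_\eps v^\eps\to(v_1,v_2)$ (convergence of the two components in the plain product $L^2$ topology), the constant sequence $v^\eps\equiv v$ for a fixed $v\in H^1(\Omega)\setminus H^1_0(\Omega)$ satisfies $\iota_\eps v^\eps\to(v,\gamma v)$ with $\gamma v\neq0$ and $\liminf_{\eps\to0}E_\eps(v^\eps)\le\frac12\int_\Omega|\nabla v|^2<\infty$, while $\tilde E$ is $+\infty$ at that target; the liminf inequality fails. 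So under your interpretation the statement is false, and the proof cannot be patched at that one sentence.

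The missing ingredient --- and the reason the paper points to Corollary \ref{cor4.1} --- is the degeneration of the underlying norms when $\kappa=\lim_{\eps\to0}\eps\phi(\eps)=\infty$. The convergence $\iota_\eps v^\eps\to w$ must carry a uniform bound in $L^2_\eps(\Omega)$ (equivalently, in the $\kappa$-weighted norms of (\ref{df-Xka}), which blow up on the boundary component), since that is the metric in which the flows $E_\eps$ live. Once $\sup_{\eps>0}\|v^\eps\|_{L^2_\eps}<\infty$ is part of the hypothesis, the computation in (\ref{5.tr}) gives $\|m_\eps v^\eps\|^2_{L^2(\partial\Omega)}\le(\eps\phi(\eps))^{-1}\|v^\eps\|^2_{L^2_\eps}\to0$, so necessarily $v_2=0$; combined with your identification $v_2=\gamma v_1$ this yields $\gamma v_1=0$, i.e. $v_1\in H^1_0(\Omega)$, and the remainder of your argument (local weak $H^1$ compactness, lower semicontinuity of the norm, the trace identification via Lemmas \ref{traceU} and \ref{lem11}, and the constant recovery sequence for $v\in H^1_0(\Omega)$) goes through essentially as in Lemma \ref{l3.10}. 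You should therefore state the $L^2_\eps$-boundedness explicitly as part of what ``$\iota_\eps v^\eps\to w$'' means in the case $\kappa=\infty$; without it the corollary is not true.
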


\subsection{Case $\kappa =0$: zero Neumann data}
Similarly to the previous case, 
the regularity of initial conditions imply that Lemma \ref{lem1.6} part (i) and Lemma \ref{lem1.7} hold. In particular due to Corollary \ref{cor4.1} part (1),
we have $u\in L^\infty(0,T;H^1(\Omega))$ and $\Delta u\in L^2(0,T; L^2(\Omega)).$ In particular, the trace of the normal derivative of $u$ at the boundary exists, at least as an element of $H^{-1/2}(\partial\Omega)$.
\begin{corollary}\label{cor4.2}
Let us suppose that $\kappa =0$, 
$u_0^\eps$ are such that 
$\sup_{\eps>0}E_\eps(u^\eps_0)\le M<\infty.$ Then:\\
(1) there is a subsequence $u^\eps$ such that $u^\eps \xrightharpoonup{*} u$  in $L^\infty(0,T; L^2(\Omega))$ and for all $\delta>0$ $\nabla u^\eps\rightharpoonup \nabla u$ in  $L^2(0,T; L^2(\Omega_\delta))$. Moreover, $u_t,$ $\Delta u\in L^2(0,T;L^2(\Omega)).$\\
(2) $\gamma\left(\frac{\partial u}{\partial x_n}\right)=0$ in $H^{-1/2}(\Sigma_T)$;\\
(3)  $u$ is a strong solution to the heat eq.,
\begin{eqnarray}\label{r-hN}
 &u_t  = \Delta u & (x,t)\in \Omega_T,\nonumber\\
 &\frac {\partial u}{\partial \nu} = 0 & (x,t)\in \Sigma_T,\\
 &u(x,0) = u_0(x) & x\in \Omega. \nonumber
\end{eqnarray}
\end{corollary}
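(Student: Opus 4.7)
The plan is to follow the pattern of Corollary \ref{cor4.1}, replacing the strong boundary-layer damping of the Dirichlet case (which forced $\gamma u=0$) by the weak-layer balance $\eps\phi(\eps)\to\kappa=0$, whose net effect is to kill the shell contributions and leave the natural boundary conditions for the limit.

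Part (1) is immediate from tools already assembled. Lemmas \ref{best}, \ref{leEDB}, \ref{lem1.6}(a) and \ref{lem1.7} supply uniform bounds on $u^\eps$ in $L^\infty(0,T;L^2(\Omega))$, on $u^\eps_t$ in $L^2(\Omega_T)$, on $\nabla u^\eps$ in $L^2(0,T;L^2(\Omega_\delta))$ for every $\delta>0$, and a weak $H^2_{loc}$ limit. The bound $\Delta u\in L^2(\Omega_T)$ comes from the energy--dissipation balance \eqref{edb}: since $b_\eps=1$ and $\di(a_\eps\nabla u^\eps)=\Delta u^\eps$ on $\Omega_\eps$,
\[\int_0^T\!\!\!\int_{\Omega_\eps}|\Delta u^\eps|^2\le\int_0^T\!\!\!\int_\Omega\frac{1}{b_\eps}|\di(a_\eps\nabla u^\eps)|^2\le 2 E_\eps(u_0^\eps)\le 2M\]
uniformly in $\eps$, hence $\chi_{\Omega_\eps}\Delta u^\eps\rightharpoonup\Delta u$ in $L^2(\Omega_T)$.

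For (2) and (3) I would pass to the limit in the weak form of \eqref{eq1p} with a test function $\vfi\in C^1(\overline\Omega\times[0,T])$ satisfying $\vfi(T)=0$ that is \emph{not} required to vanish on $\partial\Omega$. Multiplying \eqref{eq1p} by $\vfi$ and integrating by parts in space (the boundary term $\int_{\partial\Omega}a_\eps\partial_\nu u^\eps\,\vfi$ vanishes by Lemma \ref{lem1.1}) yields
\[\int_\Omega b_\eps u_0^\eps\,\vfi(0)+\int_{\Omega_T}b_\eps u^\eps\vfi_t=\int_{\Omega_T}a_\eps\nabla u^\eps\cdot\nabla\vfi.\]
I split each integral into its $\Omega_\eps$-part and its shell part. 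On $\Omega_\eps$, $a_\eps\equiv b_\eps\equiv 1$, and the argument from Theorem \ref{thm_conv_dbc} (locally uniform convergence $a_\eps\to 1$ on $\Omega_\delta$ together with the weak convergence of $\nabla u^\eps$) passes the three contributions to $\int_\Omega u_0\vfi(0)$, $\int_{\Omega_T}u\vfi_t$, and $\int_{\Omega_T}\nabla u\cdot\nabla\vfi$ respectively. On the shell $\Omega\setminus\Omega_\eps$, Cauchy--Schwarz combined with the uniform bound $\int b_\eps(u^\eps)^2\le M$ gives
\[\phi(\eps)\Bigl|\!\int_0^T\!\!\!\int_{\Omega\setminus\Omega_\eps}\!\! u^\eps\vfi_t\Bigr|\le\Bigl(\phi(\eps)\!\int_{\Omega_T}(u^\eps)^2\Bigr)^{\!\!1/2}\bigl(\eps\phi(\eps)\bigr)^{\!\!1/2}T^{1/2}|\partial\Omega|^{1/2}\|\vfi_t\|_\infty\xrightarrow{\eps\to 0}0,\]
and analogous estimates handle the initial-data shell term and the $a_\eps\nabla u^\eps\cdot\nabla\vfi$ shell term (the latter being easier since $a_\eps\le 1$). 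After the limit I arrive at
\[\int_\Omega u_0\vfi(0)+\int_{\Omega_T}u\vfi_t=\int_{\Omega_T}\nabla u\cdot\nabla\vfi.\]

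The last step extracts (2) and (3). Since $u_t\in L^2(\Omega_T)$ and $\Delta u\in L^2(\Omega_T)$, the Fujiwara--Morimoto theory gives $\gamma(\partial u/\partial\nu)\in L^2(0,T;H^{-1/2}(\partial\Omega))$. Integrating by parts in both time and space above transforms the limiting identity into
\[\int_{\Omega_T}(u_t-\Delta u)\vfi+\int_0^T\!\!\!\int_{\partial\Omega}\gamma\bigl(\tfrac{\partial u}{\partial\nu}\bigr)\gamma(\vfi)=0\]
for every admissible $\vfi$. Testing with $\vfi\in C^\infty_c(\Omega_T)$ forces $u_t=\Delta u$ a.\,e., which is (3); reinserting this and invoking density of $\{\gamma(\vfi)\}$ in $H^{1/2}(\Sigma_T)$ yields $\gamma(\partial u/\partial\nu)=0$ in $H^{-1/2}(\Sigma_T)$, which is (2). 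The only place the hypothesis $\kappa=0$ enters the argument is in the vanishing of the shell contributions; that is the sole technical obstacle, and it is resolved by the elementary Cauchy--Schwarz bound above.
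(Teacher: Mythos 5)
Your argument is correct, but for parts (2) and (3) it runs in the opposite direction from the paper's. The paper first proves (2) directly and quantitatively: combining the energy--dissipation balance \eqref{edb} with the Reilly identity \eqref{reilly} gives $\frac{1}{\eps\phi(\eps)}\int_{\partial\Omega_\eps\times(0,T)}|\partial u^\eps/\partial x_N|^2\le 2E_\eps(u_0^\eps)$, and multiplying by $\eps\phi(\eps)\to\kappa=0$ together with Lemma \ref{traceU} yields $\gamma(\partial u/\partial x_N)=0$ even in $L^2(\Sigma_T)$, which is stronger than the stated $H^{-1/2}$ conclusion; part (3) is then obtained by integrating by parts only over $\Omega_\eps$ (so the $b_\eps$-shell never enters) and using (2) to kill the boundary term on $\partial\Omega_\eps$. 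You instead test the full equation on $\Omega$ with boundary-nonvanishing test functions, kill the shell contributions by the Cauchy--Schwarz/$\eps\phi(\eps)\to0$ mechanism (essentially the weak-solution argument of Proposition \ref{w-ka-zer}), and then extract both the PDE and the Neumann condition a posteriori from the limiting identity via Fujiwara--Morimoto and density of traces. What your route buys is that it bypasses the Reilly identity entirely for this corollary; what it loses is the $L^2$ regularity of the vanishing normal trace. Two small repairs are needed: in your displayed shell estimate the first factor must be $\bigl(\phi(\eps)\int_0^T\!\int_{\Omega\setminus\Omega_\eps}(u^\eps)^2\bigr)^{1/2}$, not an integral over all of $\Omega_T$ (as written, that factor is unbounded and the product need not vanish); and the initial-data shell term requires the standing well-preparedness bound $\sup_\eps\int_\Omega b_\eps(u_0^\eps)^2<\infty$, which is not in the corollary's displayed hypothesis but is part of the section's standing assumptions (the paper's own proof also uses it implicitly through Lemma \ref{lem1.6}).
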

\begin{proof}
Part (1) is in fact Part (1) of Corollary \ref{cor4.1}.

We recall Lemma \ref{leEDB}, it is shown for any $b_\eps$. It gives us
$$ 
\frac1{\phi(\eps)}
\int_{\Omega_T}|\di (a_\varepsilon\nabla u)|^2\,dx dt \le
\int_{\Omega_T} \frac 1{b_\varepsilon}|\di (a_\varepsilon\nabla u)|^2\,dx dt \le 2 E_\varepsilon(u_0).
$$
Combining this estimate with the Reilly identity, (\ref{reilly}), yields
\begin{equation}
\frac1{\eps\phi(\eps)} \int_{\partial\Omega_\varepsilon\times (0,T)} 
\left|\frac{\partial u}{\partial x_N} \right|^2\,\cH^{N-1}dt \le 2 E_\varepsilon(u_0).
\end{equation}
If we multiply both sides by  $\eps\phi(\eps)$, then the RHS goes to zero. We invoke the Lemma \ref{traceU} to see that 
$$
\int_0^T 
\| \gamma (\frac{\partial u}{\partial x_N} )\|^2_{L^2(\partial\Omega)}\, dt
= \lim_{\varepsilon\to 0}
\int_{\partial\Omega_\varepsilon\times (0,T)} 
\left|\frac{\partial u}{\partial x_N} \right|^2\,\cH^{N-1}dt \le 0.
$$

The comment made in Corollary \ref{cor4.1} applies: we do not have any corresponding version of the Energy-Dissipation Balance holds. For this reason,
we will use an argument similar to that used in Corollary \ref{cor4.1} to prove that  $u$ is a solution to the heat eq. We will show that $u$ is a weak solution to the heat eq. For this purpose we notice
$$
\int_{\Omega_T} (- u \varphi_t + \nabla u\nabla\varphi)\,dxdt =
\lim_{\delta\to 0} 
\int_0^T\int_{\Omega_\delta}(- u \varphi_t + \nabla u\nabla\varphi)\,dxdt =: R
$$
Now, we use the fact that $u$ is a limit, hence
\begin{align*}
R &= \lim_{\delta\to 0} \lim_{\epsilon\to 0}
\int_0^T\int_{\Omega_\delta}(- u^\eps\varphi_t + \nabla u^\eps\nabla\varphi)\,dxdt\\
& = \lim_{\delta\to 0} \lim_{\epsilon\to 0}\int_0^T\int_{\Omega_\delta\setminus \Omega_\eps}
(- u^\eps \varphi_t + \nabla u^\eps\nabla\varphi)\,dxdt + \lim_{\epsilon\to 0} \int_0^T\int_{\Omega_\eps}(- u^\eps \varphi_t + \nabla u^\eps\nabla\varphi)\,dxdt
\end{align*}
Since the sequence $u^\eps$ is uniformly bounded, then the first integral goes to zero. In the second one we integrate by parts, 
$$
R = \lim_{\epsilon\to 0}\int_0^T\int_{\Omega_\eps}(u^\eps_t - \Delta u^\eps)  \varphi\,dxdt + \int_{\Omega_\eps} u^\eps_0(x) \varphi(x,0)\,dx -
\int_0^T\int_{\partial\Omega_\eps} \frac{\partial u}{\partial x_N} \varphi \,d\cH^{N-1}dt.
$$
Since $u^\eps$ is a strong solution to (\ref{eq2}) in $\Omega_\eps\times(0,T)$, then the first term on the RHS vanishes. The second one converges to $\int_\Omega u_0(x) \varphi(x,0)\,dx$. Due to the first observation in this corollary, the third term goes to zero. Thus,
$$
\int_{\Omega_T} (- u \varphi_t + \nabla u\nabla\varphi)\,dxdt =
\int_{\Omega} u_0(x) \varphi(x,0)\,dx .
$$
Since $\varphi$ is arbitrary element of $C^\infty(\Omega_T)$ with $\varphi(\cdot, T) =0$, we deduce that $u$ satisfies (\ref{r-hN}) with homogeneous Neumann boundary condition.
\end{proof}

We close this subsection by recalling that eq. (\ref{r-hN}) is the gradient flow of $\bar E$. Moreover, we showed in Lemma \ref{l3.10} that $\bar E$ is the $\Gamma$-limit of $E_\eps$.

\subsection*{Acknowledgement}
The authors thank prof. Rodrigues for bringing paper \cite{colli} to their attention. YG was in part supported by the Japan Society for the Promotion of Science through grants No.\ 19H00689 (Kiban A), No.\ 18H05323 (Kaitaku) and by Arithmer Inc.\ and Daikin Industries Ltd.\ through a collaborative grant. This work was partly created during M{\L}'s JSPS Postdoctoral Fellowship at the University of Tokyo. M{\L} was in part supported by the Kakenhi Grant-in-Aid No.\ 21F20811. Moreover, M{\L} and PR were in part supported by the National Science Centre, Poland through the grant 2017/26/M/ST1/00700.

\end{document}